\def\mymode{x}  

\if\mymode i
\documentclass{amsart}
\else
\documentclass[11pt]{amsart}
\fi

\usepackage[breaklinks]{hyperref}
\usepackage{amsmath}
\usepackage{amssymb}
\usepackage{mathrsfs}
\usepackage{amsthm}
\usepackage[utf8]{inputenc}
\usepackage{svninfo}
\usepackage{prettyref}
\ifx\FrenchText\undefined
\usepackage[british]{babel}
\else
\usepackage[french]{babel}
\AtBeginDocument{%
\catcode`\:=12%
\catcode`\!=12%
}
\fi

\makeatletter

\sloppy

\ifx\Presentation\undefined
\oddsidemargin=.12in \evensidemargin=.12in
\textwidth=6.25in \textheight=8.0in
\topmargin=.5in
\fi

\newcommand{\fref}[1]{\prettyref{#1}}
\newrefformat{item}{\ref{#1}}
\newrefformat{apx}{Appendix~\ref{#1}}
\newrefformat{cha}{Chapter~\ref{#1}}
\newrefformat{sec}{Section~\ref{#1}}

\ifx\thmnum\undefined

\fi

\newcommand{\mynewthm}[3][]{%
  \def\PARAM{#1}
  \ifx\PARAM\empty
  \newtheorem{#2}[thmnum]{#3}
  \else
  \newtheorem{#2}{#3}[#1]
  \fi
  \newtheorem*{#2*}{#3}%
  \newrefformat{#2}{#3~\ref{##1}}%
}

\ifx\FrenchText\undefined
\newcommand{\ThmLabel}{Theorem}
\newcommand{\PrpLabel}{Proposition}
\newcommand{\LemLabel}{Lemma}
\newcommand{\FctLabel}{Fact}
\newcommand{\CorLabel}{Corollary}
\newcommand{\DfnLabel}{Definition}
\newcommand{\ConvLabel}{Convention}
\newcommand{\NtnLabel}{Notation}
\newcommand{\CstLabel}{Construction}
\newcommand{\ExmLabel}{Example}
\newcommand{\RmkLabel}{Remark}
\newcommand{\QstLabel}{Question}

\else
\newcommand{\ThmLabel}{\iflanguage{french}{Théorème}{Theorem}}
\newcommand{\PrpLabel}{Proposition}
\newcommand{\LemLabel}{\iflanguage{french}{Lemme}{Lemma}}
\newcommand{\FctLabel}{\iflanguage{french}{Fait}{Fact}}
\newcommand{\CorLabel}{\iflanguage{french}{Corollaire}{Corollary}}
\newcommand{\DfnLabel}{\iflanguage{french}{Définition}{Definition}}
\newcommand{\ConvLabel}{Convention}
\newcommand{\NtnLabel}{Notation}
\newcommand{\CstLabel}{Construction}
\newcommand{\ExmLabel}{\iflanguage{french}{Exemple}{Example}}
\newcommand{\RmkLabel}{\iflanguage{french}{Remarque}{Remark}}
\newcommand{\QstLabel}{Question}

\fi

\theoremstyle{plain}
\mynewthm{thm}{\ThmLabel}
\mynewthm{prp}{\PrpLabel}
\mynewthm{lem}{\LemLabel}
\mynewthm{fct}{\FctLabel}
\mynewthm{cor}{\CorLabel}

\theoremstyle{definition}
\mynewthm{dfn}{\DfnLabel}
\mynewthm{conv}{\ConvLabel}
\mynewthm{conj}{Conjecture}
\mynewthm{ntn}{\NtnLabel}
\mynewthm{cst}{\CstLabel}
\mynewthm{rstn}{Restriction}

\theoremstyle{remark}
\mynewthm{rmk}{\RmkLabel}
\mynewthm{qst}{\QstLabel}
\mynewthm{exm}{\ExmLabel}

\ifx\Presentation\undefined

\newcommand{\myenumlabel}[1]{\textnormal{(\roman{#1})}}

\fi

\newcounter{cycprfcnt}
\newenvironment{cycprf}%
{\begin{list}{\PackageWarning{begnac}{Label required for cycprf}}%
  {%
    \setcounter{cycprfcnt}{1}
    \setlength{\itemindent}{0.5\leftmargin}%
    \setlength{\leftmargin}{0pt}%
    \newcommand{\cpcurr}{\myenumlabel{cycprfcnt}}%
    \newcommand{\cpnext}{\addtocounter{cycprfcnt}{1}\cpcurr}%
    \newcommand{\cpnum}[1]{\setcounter{cycprfcnt}{##1}\cpcurr}%
    \newcommand{\cpfirst}{\cpnum{1}}%
    \newcommand{\impnext}{\cpcurr{} $\Longrightarrow$ \cpnext.}%
    \newcommand{\impfirst}{\cpcurr{} $\Longrightarrow$ \cpfirst.}%
  }%
}%
{\qedhere\end{list}}%

\def\indsym#1#2{%
  \setbox0=\hbox{$\m@th#1x$}%
  \kern\wd0%
  \hbox to 0pt{\hss$\m@th#1\mid$\hbox to 0pt{$\m@th#1^{#2}$\hss}\hss}%
  \lower.9\ht0\hbox to 0pt{\hss$\m@th#1\smile$\hss}%
  \kern\wd0}
\newcommand{\ind}[1][]{\mathop{\mathpalette\indsym{#1}}}

\def\nindsym#1#2{%
  \setbox0=\hbox{$\m@th#1x$}%
  \kern\wd0%
  \hbox to 0pt{\hss$\m@th#1\not$\kern1.4\wd0\hss}
  \hbox to 0pt{\hss$\m@th#1\mid$\hbox to 0pt{$\m@th#1^{#2}$\hss}\hss}%
  \lower.9\ht0\hbox to 0pt{\hss$\m@th#1\smile$\hss}%
  \kern\wd0}
\newcommand{\nind}[1][]{\mathop{\mathpalette\nindsym{#1}}}

\def\dotminussym#1#2{%
  \setbox0=\hbox{$\m@th#1-$}%
  \kern.5\wd0%
  \hbox to 0pt{\hss\hbox{$\m@th#1-$}\hss}%
  \raise.6\ht0\hbox to 0pt{\hss$\m@th#1.$\hss}%
  \kern.5\wd0}
\newcommand{\dotminus}{\mathbin{\mathpalette\dotminussym{}}}

\renewcommand{\emptyset}{\varnothing}
\renewcommand{\setminus}{\smallsetminus}
\def\models{\vDash}

\newcommand{\rest}{{\restriction}}

\DeclareMathOperator{\tp}{tp}

\newcommand{\Cb}{\mathrm{Cb}}
\DeclareMathOperator{\Th}{Th}

\DeclareMathOperator{\dcl}{dcl}

\DeclareMathOperator{\id}{id}

\DeclareMathOperator{\Aut}{Aut}

\newcommand{\cA}{\mathcal{A}}
\newcommand{\cB}{\mathcal{B}}

\newcommand{\cT}{\mathcal{T}}
\newcommand{\cU}{\mathcal{U}}

\newcommand{\bE}{\mathbb{E}}

\newcommand{\bP}{\mathbb{P}}
\newcommand{\bQ}{\mathbb{Q}}
\newcommand{\bR}{\mathbb{R}}
\newcommand{\bS}{\mathbb{S}}

\makeatother

\author{Itaï Ben Yaacov}
\address{Itaï \textsc{Ben Yaacov} \\
  Université Claude Bernard -- Lyon 1 \\
  Institut Camille Jordan \\
  43 boulevard du 11 novembre 1918 \\
  69622 Villeurbanne Cedex \\
  France}

\urladdr{\url{http://math.univ-lyon1.fr/~begnac/}}

\author{Alexander Berenstein}
\address{Alexander Berenstein, Universidad de los Andes \\
  Cra 1 No 18A-10 \\
  Bogotá, Colombia \\
  and
  Université Claude Bernard -- Lyon 1 \\
  Institut Camille Jordan \\
  43 boulevard du 11 novembre 1918 \\
  69622 Villeurbanne Cedex \\
  France}
\urladdr{\url{http://matematicas.uniandes.edu.co/~aberenst}}

\author{C. Ward Henson}
\address{C. Ward Henson \\
  University of Illinois at Urbana-Champaign \\
  Urbana, Illinois 61801 \\
  USA}
\urladdr{\url{http://www.math.uiuc.edu/~henson}}

\thanks{The first author was supported by
  CNRS-UIUC exchange programme.
  The first and second authors were supported by
  ANR chaire d'excellence junior THEMODMET (ANR-06-CEXC-007).
  The third author was
  supported by NSF grants DMS-0100979, DMS-0140677 and DMS-0555904.}

\svnInfo $Id: Lp.tex 963 2009-07-28 07:02:29Z begnac $

\date{\today}
\keywords{$L_p$ Banach lattices ; stability ; independence}
\subjclass[2000]{03C45 ; 03C90 ; 46B04 ; 46B42}

\DeclareMathOperator{\dist}{dist}

\title[Independence in $L_p(\mu)$]{Model-theoretic independence\\
in the Banach lattices $L_p(\mu)$}

\begin{document}

\begin{abstract}
We study model-theoretic stability and independence in Banach
lattices of the form $L_p(X,U,\mu)$, where $1 \leq p < \infty$. We
characterize non-dividing using concepts from analysis and show that
canonical bases exist as tuples of real elements.
\end{abstract}

\maketitle

\section{Introduction}

Let $X$ be a set, $U$ a $\sigma$-algebra on $X$ and $\mu$ a
measure on $U$, and let $p \in [1,\infty)$. We denote by
$L_p(X,U,\mu)$ the space of (equivalence classes of)
$U$-measurable functions $f\colon X \to \mathbb{R}$ such that
$\|f\|=(\int |f|^pd\mu)^{1/p}<\infty$.  We consider this space as
a Banach lattice (complete normed vector lattice) over $\bR$ in the
usual way; in particular, the lattice operations $\wedge,\vee$ are
given by pointwise maximum and minimum.  In this paper we study
model-theoretic stability and independence in such $L_p$ Banach
lattices.

There are several ways to understand model-theoretic stability for
classes of structures, like these, that lie outside the first order
context.  For the structures considered in this paper, these
approaches are completely equivalent. The work of Iovino
\cite{Iovino:StableBanach} provides tools for understanding stability in normed
space structures using the language of positive bounded formulas
developed by Henson \cite{Henson:NonstandardHulls}. (See also \cite{Henson-Iovino:Ultraproducts}.) A different
approach was initiated by Ben Yaacov \cite{BenYaacov:SimplicityInCats} in the compact
abstract theory (cat) setting \cite{BenYaacov:PositiveModelTheoryAndCats}; first order structures and
normed space structures are special cases.  Roughly speaking,
stability as developed in \cite{Iovino:StableBanach} and \cite{BenYaacov:SimplicityInCats} corresponds to
the study of universal domains in which there are bounds on the size
of spaces of types.  Buechler and Lessmann \cite{Buechler-Lessmann:SimpleHomogeneous} developed a
notion of simplicity (and thus also of stability) for strongly
homogeneous structures.  More recently, Ben Yaacov and Usvyatsov
\cite{BenYaacov-Usvyatsov:CFO} developed local stability for metric
structures in a continuous version of first order logic.  (See also
\cite{BenYaacov-Berenstein-Henson-Usvyatsov:NewtonMS}.)

In all four settings, a key ingredient is the analysis of a
model-theoretic concept of independence.  In \cite[part II, section
3]{Iovino:StableBanach} this analysis is based on a notion of non-forking, which
is characterized there using definability of types.  Independence is
studied in \cite{BenYaacov:SimplicityInCats} and in \cite{Buechler-Lessmann:SimpleHomogeneous} by means of the notion of
non-dividing (as defined by Shelah); in \cite[section 2]{BenYaacov:SimplicityInCats},
non-dividing in a stable structure is also characterized via
definability of types. In \cite{BenYaacov-Usvyatsov:CFO} the local
stability of continuous formulas is developed and a treatment of
independence is sketched in
\cite{BenYaacov-Berenstein-Henson-Usvyatsov:NewtonMS}. It follows
from what is proved in those papers that stability and independence
are the same notions from all four points of view, for the
structures to which they all apply.

In particular, for the structures studied here, these four
approaches to stability and independence are equivalent.  In this
paper, we take the concept of non-dividing as the foundation of our
study of independence.

We prove here that for each $p$ with $1 \leq p < \infty$, the Banach
lattices $L_p(\mu)$ are model-theoretically stable as normed space
structures, and we give a characterization of non-dividing using
concepts from analysis.  See
\cite{BenYaacov-Berenstein-Henson-Usvyatsov:NewtonMS} for a summary
of how these results can be translated into the setting of
continuous first order logic for metric structures.

Krivine and Maurey \cite{Krivine-Maurey:EspacesDeBanachStables} noted
that $L_p(\mu)$ spaces are stable, in a sense
that amounts to stability for \emph{quantifier-free} positive
formulas in the language of Banach \emph{spaces}. This observation
was part of a larger project, in which the main theorem is a deep
subspace property of quantifier-free stable (infinite dimensional)
Banach spaces: if $X$ is such a Banach space, then for some $p$ in
the interval $1 \leq p < \infty$, the sequence space $\ell_p$ embeds
almost isometrically in $X$.

We strengthen this stability observation about the $L_p(\mu)$
spaces so that it applies to the Banach \emph{lattice} setting and
to arbitrary positive bounded formulas in that language
(\emph{i.e.}, with bounded quantifiers allowed).  Our general
motivation is model-theoretic, in that we study independence,
non-dividing, and canonical bases in these structures.  In this
paper we do not attempt to derive structural results that apply to
all Banach lattices that are stable in the (strong) sense
considered here.

Our work is organized as follows.

In section 2, we introduce basic notions from analysis and
probability, such as conditional expectations and distributions.

In section 3 we recall model-theoretic results about atomless $L_p$
Banach lattices, concerning properties such as elimination of
quantifiers \cite{Henson-Iovino:Ultraproducts} and separable categoricity \cite{Henson:NonstandardHulls}, and we
review a characterization of types in terms of conditional
distributions that is due to Markus Pomper \cite{Pomper:PhD}. We prove that
the Banach lattices $L_p(\mu)$ are $\omega$-stable (with
respect to the metrics on the spaces of types that are induced by
the norm).
(This fact had been observed by the third
author \cite{Henson:SeparableBanachSpaces} but not published.) All of this is done in the
model theoretic context described in \cite{Henson-Iovino:Ultraproducts}, which develops the
language of \emph{positive bounded formulas} with an approximate
semantics.

In section 4 we use conditional expectations to characterize
non-dividing for the theory of atomless $L_p$ Banach lattices, thus
providing a relation between model-theoretic independence and the
notion of independence used in analysis and probability.

In section 5 we give a close analysis of the space of 1-types over a
given set of parameters in atomless $L_p$ Banach lattices.  In
particular, we give an explicit formula for calculating the distance
metric on that space of 1-types (\fref{cor:TypeDist}.)  This metric
is centrally important in the model theory of structures such as the
ones considered here.  The tools developed in this section also
yield a second characterization of model-theoretic independence in
these structures  (\fref{prp:independence-via-slices}.)

Finally, in section 6, we construct canonical bases for types in
atomless $L_p$ Banach lattices using conditional slices.
In particular we prove they always exist as sets of ordinary elements,
i.e., without any need for imaginary sorts.

\section{Basic analysis and probability}

We start with a review of some results from analysis that we use
throughout this paper.

Let $(X,U,\mu)$ be a measure space; that is, $X$ is a nonempty set,
$U$ is a $\sigma$-algebra of subsets of $X$, and $\mu$ is a
$\sigma$-additive measure on $U$ (not necessarily assumed to be
finite or even $\sigma$-finite, in general).

A measure space $(X,U,\mu)$ is called \emph{decomposable} (also
called \emph{strictly localizable}) if there exists a partition
$\{X_i: i \in I\}\subset U$ of $X$ into measurable sets such that
$\mu(X_i)<\infty$ for all $i \in I$ and such that for any subset $A$
of $X$, $A \in U$ iff $A \cap X_i \in U$ for all $i \in I$ and, in
that case, $\mu(A)=\sum_{i \in I}\mu(A\cap X_i)$. When these
properties hold, the partition $\{X_i: i \in I\}$ will be called a
\emph{witness} for the decomposability of $(X,U,\mu)$.
(See \cite{Haydon-Levy-Raynaud:RandomlyNormedSpaces}.)

\begin{conv}\label{decomposable}
Throughout this paper we require that all measure spaces are
decomposable.
\end{conv}

If $(X,U,\mu)$ and $(Y,V,\nu)$ are measure spaces, we define a
product measure space $(X,U,\mu) \otimes (Y,V,\nu)$ by defining $\mu
\otimes \nu$ in the usual way on rectangles $A \times B$, where $A
\in U$ and $B \in V$ have finite measure, and then extending to make
the resulting product measure space on $X \times Y$ decomposable.
When both measure spaces are $\sigma$-finite, this agrees with the
usual product measure construction.

Let $1 \leq p < \infty.$ A Banach lattice $E$ is an \emph{abstract
$L_p$-space} if $\| x+y \| ^p = \| x \|^p + \| y \|^p$ whenever $x,y
\in E$ and $x \wedge y = 0$.  Evidently $L_p(X,U,\mu)$ is an abstract
$L_p$-space for every measure space $(X,U,\mu)$. For the study of
$L_p(\mu)$ spaces, the requirement that all measure spaces be
decomposable causes no loss of generality; indeed, the
representation theorem for abstract $L_p$-spaces states that each
such space is $L_p(X,U,\mu)$ for some decomposable measure space
$(X,U,\mu)$.  (Discussions of this representation theorem can be
found in \cite[pp.\ 15--16]{Lindenstrauss-Tzafriri:ClassicalBanachSpacesII} and \cite[Chapter 5]{Lacey:IsometricTheory}; see
\cite[p.\ 135]{Lacey:IsometricTheory} for the history of this result. See also the
proof of Theorem 3 in \cite{Bretagnolle-DacunhaCastelle-Krivine:LoisStables}, which was a key paper in the model
theory of $L_p$-spaces.)

Let $E$ be any Banach lattice and $f\in E$.  The \emph{positive
part} of $f$ is $f \vee 0$, and it is denoted $f^+$.  The
\emph{negative part} of $f$ is $f^- = (-f)^+$, and one has $f = f^+
- f^-$ and $|f| = f^++f^-$.  Further, $f$ is \emph{positive} if
$f=f^+$ and $f$ is \emph{negative} if $-f$ is positive. For $f,g\in
E$, one has $f\geq g$ iff $f-g$ is positive.

A subspace $F\subset E$ is an \emph{ideal} if whenever $g\in E$ and
$f\in F$ are such that $0\leq |g|\leq |f|$, one always has $g\in F$.
An ideal $F$ is a \emph{band} if for all collections $\{h_j: j \in
J\}\subset F$ such that $h=\bigvee_{j\in J} h_j\in E$, one always
has $h\in F$. By a \emph{sublattice} of $E$ we mean a norm-closed
linear sublattice.  If $A,B$ are subsets of $E$ we write $A \leq B$
to mean that $A$ and $B$ are sublattices of $E$ and $A$ is contained
in $B$.

Let $B\subset E$. One defines $B^\perp=\{f \in E \colon
|f|\wedge|g|=0$ for all $ g\in B\}$ and therefore $B^{\perp
\perp}=\{f \in E \colon |f|\wedge |g|=0 \text{ for all } g\in
B^\perp\}$. It is a standard fact that $B^\perp$ and $B^{\perp
\perp}$ are bands and $B^{\perp \perp}$ is the smallest band
containing $B$.  One refers to $B^\perp$ as the \emph{band
orthogonal to} $B$ and to $B^{\perp \perp}$ as the \emph{band
generated by} $B$.  In $L_p$-spaces, every band is a projection
band.  That is, for any set $B \subset L_p(X,U,\mu)$, one has a
lattice direct sum decomposition $L_p(X,U,\mu) = B^\perp \oplus
B^{\perp\perp}$. (See \cite{Schaefer:BanachLattices}, for example.)

Let $(X,U,\mu)$ be a measure space. A measurable set $S\in U$ is an
\emph{atom} if $\mu(S) > 0$ but there do not exist $S_1,S_2\in U$
disjoint, both of positive measure, such that $S_1\cup S_2=S$. One
calls $(X,U,\mu)$ \emph{atomless} if it has no atoms.  An atom in a
Banach lattice is an element $x$ such that the ideal generated by
$x$ has dimension 1.  In $L_p(X,U,\mu)$, the atoms in the sense of
this definition are exactly the elements of the form $r\chi_S$ where
$r \neq 0$ and $S$ is an atom in the sense of measure theory. We may
write $X$ as the disjoint union of two measurable sets, $X_0$ and
$X_1$, such that $X_0$ is (up to null sets) the union of all atoms
in $U$ and $X_1$ is atomless. Moreover, if $B$ is the set of atoms
in $L_p(X,U,\mu)$, then $B^{\perp\perp}=L_p(X_0,U_0,\mu)$ and
$B^\perp = L_p(X_1,U_1,\mu)$, where for each $i=0,1$, $U_i$ is the
restriction of $U$ to $X_i$.

\begin{dfn}
Let $(X,U,\nu)$ and $(Y,V,\mu)$ be a measure spaces. We write
$(Y,V,\nu)\subset (X,U,\mu)$ to mean that $V \subset U$ and $\nu(A)
= \mu(A)$ for all $A \in V$, and that there exists a witness $(X_i
\mid i \in I)$ for the decomposability of $(X,U,\mu)$ and $J \subset
I$ such that $(X_i \mid i \in J)$ witnesses the decomposability of
$(Y,V,\nu)$.  In particular $Y$ and each of the sets in $(X_i \mid i
\in J)$ are elements of $V$.
\end{dfn}

\begin{ntn}
In the rest of this paper, we will frequently use $\mu$ as the generic
symbol for a measure.  This follows usual mathematical practice, as when
$+$ is used as the symbol for addition in every abelian group.  In particular,
when we write $(Y,V,\mu) \subset (X,U,\mu)$, it is the restriction of $\mu$
to $V$ that is to be used as the measure in the measure space $(Y,V,\mu)$.
\end{ntn}

\begin{rmk}\label{representation}  Let $\cU$ be an abstract
$L_p$-space and let $(C_j \colon j=1,\dots,n)$ be an increasing
chain of  sublattices of $\cU$, so $C_1  \leq  \dots  \leq C_n \leq
\cU$.  Note that each $C_j$ is an abstract $L_p$ space. One can use
the representation theorem for abstract $L_p$ spaces to show that
there exist measure spaces $(X,U,\mu)$ and $((Y_j,V_j,\mu) \colon
j=1,\dots,n)$ satisfying $(Y_1,V_1,\mu) \subset \dots \subset
(Y_n,V_n,\mu) \subset (X,U,\mu)$, as well as an isomorphism $\Phi$
from $L_p(X,U,\mu)$ onto $\cU$ such that $\Phi$ maps
$L_p(Y_j,V_j,\mu)$ exactly onto $C_j$ for each $j$.  To see this,
proceed inductively to construct for each $j$ a maximal set $S_j$ of
pairwise disjoint positive elements of $C_j$, satisfying $S_1
\subset \dots \subset S_n$, and extend $S_n$ to a maximal set $T$ of
pairwise disjoint positive elements of $\cU$. Then the elements of $U$
(respectively, $V_j$) having finite measure can be identified with
the set of elements of $\cU$ (respectively, of $C_j$) that are
convergent sums of disjoint components of elements of $T$
(respectively, of $S_j$), and the measure $\mu$ is simply $\|\
\|^p$. The full measure spaces are then determined by taking them to
be decomposable.
\end{rmk}

A key relationship between an abstract $L_p$-space $\cU$ and a
given sublattice $C$ of $\cU$ is based on the following standard
theorem.  As explained in Remark \ref{representation}, we may take
$\cU = L_p(X,U,\mu)$ and $C=L_p(Y,V,\mu)$ where $(Y,V,\mu)
\subset (X,U,\mu)$.

\begin{fct}(Conditional Expectation)\label{fct:CondExp}
Let $(Y,V,\mu)\subset (X,U,\mu)$ be measure spaces (and recall
Convention \ref{decomposable}, which applies to both $(X,U,\mu)$
and $(Y,V,\mu))$.  Fix $1 \leq p < \infty$ and let $f \in
L_p(X,U,\mu)$.  Then there exists a unique $g_f \in L_p(Y,V,\mu)$
such that $\int_{A} g_f \, d\mu = \int_{A}f \, d\mu$ for every
$A\in V$. We call the function $g_f$ \emph{the conditional
expectation of $f$ with respect to $(Y,V,\mu)$} and denote it by
$\bE(f|V)$.  If $f = \chi_B$ for some $B \in U$, we  often
write $\bE(B|V)$ in place of $\bE(f|V)$.
The operator mapping $f$
to $\bE(f|V)$ is a contractive, positive projection from
$L_p(X,U,\mu)$ onto $L_p(Y,V,\mu)$, and $\bE(f|V)=0$ for any $f \in
L_p(Y,V,\mu)^\perp$.
\end{fct}

\begin{proof} If $\mu(Y) < \infty$, the existence of $g_f$ is a
standard fact; first restrict $f$ and $U$ to $Y$, and then apply
the usual conditional expectation operator.  Our assumption that
$(Y,V,\mu)$ is decomposable gives the general case.  See the proof
of \cite[Theorem 11.4, p.\ 212]{Schaefer:BanachLattices} for details.
A somewhat more
elementary proof of the existence of a contractive, positive
projection from $L_p(X,U,\mu)$ onto $L_p(Y,V,\mu)$ is given in
\cite{Lindenstrauss-Tzafriri:ClassicalBanachSpacesII}. (See Lemma 1.b.9 and its proof on page 20.)
\end{proof}

\begin{rmk}\label{l2projection}
Consider the case $p=2$; $L_2(X,U,\mu)$ is the expansion of a
Hilbert space by adding the lattice operations $\wedge$, $\vee$.
Let $(Y,V,\mu) \subset (X,U,\mu)$ and let $f \in L_2(X,U,\mu)$.
From the definitions, one has $\langle f -\bE(f|V),\chi_A \rangle=0$ for
all $A \in V$; using linearity and continuity of the inner product
as well as density of simple functions, it follows that $\langle f
-\bE(f|V),h \rangle=0$ for all $h\in L_2(Y,V,\mu)$. Thus $\bE(f|V)$ is
the orthogonal projection of $f$ on the closed subspace $L_2(Y,V,\mu)$.
\end{rmk}

For our characterization of model-theoretic independence, it is
important that the conditional expectation operator defined above is
uniquely determined by some of its Banach lattice properties. This
is shown in the following result using functional analysis; a
second, more elementary proof is given in \fref{sec:CondSlice}.
(See
\fref{prp:CondSliceExpectNorm} and \fref{rmk:SecondCondExpUniqueness}).

\begin{prp}\label{prp:CExpUnq}
  Let $\cU$ be an abstract $L_p$ Banach lattice and let $C$ be any
  sublattice of\ \ $\cU$.  There is a unique linear operator $T
  \colon \cU \to C$ such that $T$ is a contractive, positive projection
  and $T(f)=0$ for any $f \in C^\perp$.  Indeed, if $(Y,V,\mu)
  \subset(X,U,\mu)$ are any measure spaces and $\Phi$ is any
  isomorphism from $L_p(X,U,\mu)$ onto $\cU$ that maps $L_p(Y,V,\mu)$
  exactly onto $C$, then for any $f \in \cU$ one has that $T(f)$ =
  $\Phi^{-1}(\bE(\Phi(f)|V))$.
\end{prp}

\begin{proof} The existence of such an operator and its connection
  to the conditional expectation is given in Fact \ref{fct:CondExp}.

  It remains to prove uniqueness.  Suppose $T \colon \cU \to C$ is a
  contractive, positive projection and $T(f)=0$ for any $f \in
  C^\perp$.  We first show that if $f \in C$, then $T$ maps the band
  $B=\{f\}^{\perp\perp}$ generated by $f$ into itself. To see this,
  note that if $0 \leq x \leq |f|$, then positivity of $T$ implies $0
  \leq T(x) \leq T(|f|) = |f|$, so $T(x) \in B$.  In $L_p$-spaces, $B$
  is the closed linear span of such $x$, so $T$ necessarily maps $B$
  into itself.

  Let $(Y,V,\mu) \subset(X,U,\mu)$ be measure spaces such that $\cU =
  L_p(X,U,\mu)$ and $C = L_p(Y,V,\mu)$.  Recalling that $(Y,V,\mu)$ is
  decomposable and using the band argument in the second paragraph of
  this proof, we may reduce to the case where $Y=X$ and $\mu(X) <
  \infty$. Without loss of generality we take $\mu(X)=1$.

  Let $q$ be dual to $p$, so $q = \infty$ if $p=1$ and $p+q = pq$
  otherwise.  Then $L_q(X,U,\mu)$ is the dual space of
  $L_p(X,U,\mu)$, with the pairing given by $\langle f,g \rangle = \int_X
  fg\, d\mu $ for all $f \in L_p(X,U,\mu)$ and $g \in L_q(X,U,\mu)$.
  Let $T' \colon L_q(X,V,\mu) \to L_q(X,U,\mu)$ be the adjoint of
  $T$, a positive linear operator of norm $1$.  Note that $T'(\chi_X) =
  \chi_X$, since $T'(\chi_X)$ is positive, has norm $\leq 1$, and
  satisfies $\langle \chi_X, T'(\chi_X) \rangle = \langle T(\chi_X),\chi_X
  \rangle = \langle \chi_X,\chi_X \rangle = \mu(X) = 1$.

  To prove $T$ is unique, it suffices to prove $T(f) = \bE(f|V)$ for
  any $f \in L_p(X,U,\mu)$ that satisfies $0 \leq f \leq \chi_X$,
  since the linear span of such functions is norm dense in
  $L_p(X,U,\mu)$. Let $A \in V$ and set $B = X \setminus A \in V$
  so $f = f\chi_A + f\chi_B$.
  The band argument above shows that $T(f\chi_A)$ vanishes on $B$
  and that $T(f\chi_B)$ vanishes on $A$.  Therefore we have
  \[ \int_A T(f)\, d\mu = \int_X \chi_AT(f)\, d\mu = \int_X
  \chi_AT(f\chi_A)\, d\mu + \int_X \chi_AT(f\chi_B)\, d\mu \]
  \[ = \int_X \chi_A T(f\chi_A)\, d\mu = \int_X \chi_X T(f\chi_A)\, d\mu =
  \int_X T'(\chi_X)f \chi_A\, d\mu \]
  \[ = \int_X \chi_X f \chi_A\, d\mu = \int_X \chi_A f \, d\mu = \int_A
  f\, d\mu. \]
  Hence $T(f)$ must be the conditional expectation of $f$ relative
  to $(X,V,\mu)$.
\end{proof}

\begin{rmk}
  The uniqueness of $T$ could have been derived from a theorem of
  Douglas \cite{Douglas:ContractiveProjections} (for $p=1$) and Ando
  \cite{Ando:ContractiveProjections} (for $p>1$).
  The
  restriction of $T$ to a band generated by a single element of $C$
  corresponds to the case of \ref{prp:CExpUnq} where $Y=X$ and
  $\mu(X)<\infty$, which is exactly the setting of the Douglas-Ando
  result.  Applying this to each such band would give the uniqueness
  of $T$ globally. In order to make this paper more self-contained and
  because we did not find in the literature a simple proof of the
  Douglas-Ando result for all values of $p$, we included one here. It
  is adapted from the argument for the $p=1$ case in
  \cite{Abramovich-Aliprantis-Burkinshaw:ContractiveProjections}.
\end{rmk}

\begin{rmk} In Proposition \ref{prp:CExpUnq} the assumption that
  $T=0$ on $C^\perp$ is needed for $p=1$ but it follows from the
  other assumptions about $T$ when $p>1$.
\end{rmk}

While the definition of conditional expectation is in terms of
functions on concrete measure spaces, it follows from the moreover
part of \fref{prp:CExpUnq} that the conditional
expectation $\bE(f|V)$ only depends on the embedding of $C$ in $\cU$
as an abstract sublattice.

\begin{ntn}\label{ntn:AbstractCondExp}
  If $\cU$ is an abstract $L_p$ lattice and $C$ is a
  sublattice of $\cU$ then the conditional expectation mapping from
  $\cU$ to $C$ will be denoted by $\bE_C^\cU$,
  or simply by $\bE_C$ if $\cU$ is understood from the context.
\end{ntn}

\begin{dfn} Let $(Y,V,\mu)\subset (X,U,\mu)$.
Let $\bar f=(f_1,...,f_n) \in
L_p(X,U,\mu)^n$ and let $\bar g=(g_1,...,g_n) \in L_p(X,U,\mu)^n$ be
such that $f_i$, $g_i$ are in the band generated by $L_p(Y,V,\mu)$
for $i\leq n$. We write $\dist(f_1,...,f_n|V)=\dist(g_1,...,g_n|V)$
and say that $(f_1,...,f_n)$, $(g_1,...,g_n)$ have the same
\emph{(joint) conditional distribution over} $(Y,V,\mu)$ if
\centerline{$\bE(\bar f^{-1}(B)|V)=\bE(\bar g^{-1}(B)|V)$} for
any Borel set $B\subset \mathbb{R}^n$.
\end{dfn}

\begin{dfn}
Let $(X,V,\mu)\subset (X,U,\mu)$. The measure space $(X,U,\mu)$ is
\emph{atomless over} $(X,V,\mu)$ if for every $A\in U$ of positive
finite measure there exists $B \in U$ such that $A \cap B \neq A
\cap C$ for all $C \in V$.
\end{dfn}

A key property of measure spaces having this ``atomless over''
relation is the following result
\cite[Theorem~1.3]{Berkes-Rosenthal:AlmostExchangeableSequences}.
See also
\cite[Lemma~331B]{Fremlin:MeasureTheoryVol3}.

\begin{fct}[Maharam's Lemma]\label{maharam}
Let $(X,V,\mu)\subset (X,U,\mu)$ with $(X,U,\mu)$ atomless over
$(X,V,\mu)$; then for every $A\in U$ of positive finite measure
and for every $f\in L_p(X,V,\mu)$ such that $0 \leq f \leq
\bE(A|V)$ there is a set $B\in U$ such that $B\subset A$ and
$\bE(B|V)=f$.
\end{fct}

\begin{exm}\label{lebesgue}
Let $(Y,V,\mu)$ be a measure space and let $([0,1],\cB,m)$ be the
standard Lebesgue measure space on the interval $[0,1]$. Let $\cT$
be the trivial $\sigma$-algebra on $[0,1]$: $\cT=\{\emptyset,[0,1]\}$.
Then $(Y,V,\mu)\otimes ([0,1],\cB,m)$ is atomless over
$(Y,V,\mu)\otimes ([0,1],\cT,m)$, which is isomorphic to
$(Y,V,\mu)$.
\end{exm}

The following result shows how to obtain functions with a
prescribed conditional distribution. We state the result for $L_p$
functions; in \cite{Berkes-Rosenthal:AlmostExchangeableSequences} Berkes and Rosenthal give a proof of the
corresponding result for arbitrary measurable functions.

\begin{fct}\label{BR}(Theorem 1.5 \cite{Berkes-Rosenthal:AlmostExchangeableSequences})
Let $(X,V,\mu)\subset (X,U,\mu)$ be measure spaces, where
$(X,U,\mu)$ is atomless over $(X,V,\mu)$. Let $(X,V,\mu) \subset
(X,W,\mu)$ be any other extension, not necessarily atomless over
$(X,V,\mu)$. Then for any $f\in L_p(X,W,\mu)$, there is $g\in
L_p(X,U,\mu)$ such that $\dist(f|V)=\dist(g|V)$.
\end{fct}

We need an iterated version of the previous result, as stated
in the next lemma. This was proved by Markus Pomper in his thesis
\cite[Theorem 6.2.7]{Pomper:PhD}.  Pomper gave a direct proof based
on generalizing the argument of Berkes and Rosenthal to dimension
$>1$.  We give a different proof by iterating the $1$-dimensional
result.

\begin{lem}\label{BR-P}
Let $(X,V,\mu)\subset (X,U,\mu)$ be measure spaces, where
$(X,U,\mu)$ is atomless over $(X,V,\mu)$. Let $(X,V,\mu) \subset
(X,W,\mu)$ be any other extension, not necessarily atomless over
$(X,V,\mu)$.  Then for any $f_1,...,f_n\in L_p(X,W,\mu)$, there
are $g_1,...,g_n\in L_p(X,U,\mu)$ such that
$\dist(f_1,...,f_n|V)=\dist(g_1,...,g_n|V)$.
\end{lem}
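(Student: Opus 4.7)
The strategy is induction on $n$, with the base case $n=1$ being Fact \ref{BR}. For the inductive step the natural attempt is: pick $g_1 \in L_p(X,U,\mu)$ with $\dist(g_1|V) = \dist(f_1|V)$ using Fact \ref{BR}, and then invoke the inductive hypothesis over the enlarged base $V_1 = \sigma(V,g_1)$ to realise $f_2,\ldots,f_n$ with the correct joint conditional distribution over $V_1$, via the measure-algebra isomorphism $\sigma(V,f_1) \cong V_1$ coming from the matching of marginals. This plan is sound but for one genuine obstruction: $(X,U,\mu)$ need not be atomless over $(X,V_1,\mu)$, since $g_1$ might exhaust all the atomless room of $U$ over $V$.

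To get past this I would strengthen the statement proved by induction so as to preserve such room: in addition to producing $g_1,\ldots,g_n$ with the required joint conditional distribution over $V$, the strengthened statement also requires that $(X,U,\mu)$ remain atomless over $\sigma(V,g_1,\ldots,g_n)$. Driving this strengthening is a splitting lemma: whenever $(X,U,\mu)$ is atomless over $(X,V,\mu)$, there exists an intermediate $V \subset V^* \subset U$ such that $(X,V^*,\mu)$ is atomless over $(X,V,\mu)$ and $(X,U,\mu)$ is atomless over $(X,V^*,\mu)$. Concretely, combining Remark \ref{representation} with Example \ref{lebesgue}, one extracts such a $V^*$ by splitting an auxiliary Lebesgue interval $[0,1]$ into $[0,1/2]$ and $[1/2,1]$.

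With the splitting lemma available, the strengthened base case is immediate: apply Fact \ref{BR} with the ambient extension $(X,V^*,\mu)$ of $(X,V,\mu)$ in place of $(X,U,\mu)$, producing $g_1 \in L_p(X,V^*,\mu) \subset L_p(X,U,\mu)$, and note that $\sigma(V,g_1) \subset V^*$ while atomlessness of $U$ over $V^*$ formally implies atomlessness of $U$ over any sub-$\sigma$-algebra of $V^*$ (the defining condition only becomes weaker when the conditioning $\sigma$-algebra shrinks). The inductive step then applies the strengthened hypothesis for $n-1$ to produce $g_1,\ldots,g_{n-1}$ with atomless room preserved over $V_{n-1} := \sigma(V,g_1,\ldots,g_{n-1})$, and invokes the strengthened base case with base $V_{n-1}$ to produce $g_n$ realising, over $V_{n-1}$, the transport along $\sigma(V,f_1,\ldots,f_{n-1}) \cong V_{n-1}$ of the conditional distribution of $f_n$ over $\sigma(V,f_1,\ldots,f_{n-1})$.

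The most delicate point is realising this transported conditional distribution as an $L_p$ function over a suitable extension of $V_{n-1}$ on the same underlying set $X$, so that Fact \ref{BR} in its stated form applies. This can be handled by constructing, via a tensor product with a Lebesgue factor as in Example \ref{lebesgue}, a concrete extension of $(X,V_{n-1},\mu)$ that encodes the target conditional distribution as the coordinate function on the Lebesgue factor; Fact \ref{BR} then applies directly.
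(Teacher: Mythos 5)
Your proposal is correct in outline and is, at its core, the same argument as the paper's: induct so as to reduce everything to the one-dimensional \fref{BR}, with the essential insight being that one must preserve ``atomless room'' after each $g_i$ is chosen. The difference is purely in how that room is secured. The paper does it once and for all: using \fref{maharam} it locates inside $(X,U,\mu)$ a copy of $(X,V,\mu)\otimes([0,1],\cB,m)$ over $(X,V,\mu)$, identifies $[0,1]$ with $[0,1]^n$, and fixes in advance the chain $V=V_0\subset V_1\subset\dots\subset V_n$ obtained by adjoining one Lebesgue coordinate at a time; $g_{l+1}$ is then found in $L_p(X,V_{l+1},\mu)$, and the needed atomlessness of $V_{l+1}$ over $\sigma(V,g_1,\dots,g_l)\subseteq V_l$ is automatic. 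You instead carry the room as an invariant of a strengthened induction hypothesis and re-extract it at each step via your splitting lemma. Both work, and both must address the same delicate point you flag at the end (transporting $\dist(f_{l+1}|\sigma(V,f_1,\dots,f_l))$ across the isomorphism $\sigma(V,\bar f)\cong\sigma(V,\bar g)$ so that \fref{BR} applies); the paper's up-front product reduction just makes the bookkeeping lighter, since the intermediate algebras never depend on the choices of the $g_i$. One concrete detail in your splitting lemma needs repair: partitioning the auxiliary interval into $[0,1/2]$ and $[1/2,1]$ yields an intermediate algebra generated over $V$ by a single set, which is \emph{not} atomless over $V$. The correct move is the one the paper makes, namely $[0,1]\cong[0,1]^2$ with $V^*=V\otimes\cB_1\otimes\cT_1$ (equivalently, split the binary digits of the auxiliary coordinate into two independent uniform variables). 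With that fix, and noting that relative atomlessness over a given base passes to any sub-$\sigma$-algebra of that base (the easy monotonicity you invoke, which is indeed valid), your argument goes through; alternatively you could simply perform the paper's reduction to the product case first, after which your splitting lemma becomes trivial.
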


\begin{proof}  We reduce this Lemma to the case in which
$(X,U,\mu)$ is isomorphic over $(X,V,\mu)$ to $(X,V,\mu) \otimes
([0,1],\cB,m)$, where $([0,1],\cB,m)$ is the standard Lebesgue
space.  To see that a treatment of this special case is
sufficient, it suffices to show that any $(X,U,\mu)$ that is
atomless over $(X,V,\mu)$ must contain a measure space
isomorphic over $(X,V,\mu)$ to $(X,V,\mu) \otimes ([0,1],\cB,m)$.
To prove this, it suffices to consider the case where $\mu(X) <
\infty$ since $(X,U,\mu)$ is decomposable. Fact \ref{maharam}
yields $X_0 \in U$ with $(\chi_X)/2 = \bE(X_0|V)$. Setting $X_1 = X
\setminus X_0$ we get a partition $X_0,X_1 \in U$ of $X$ such that
$(\chi_X)/2 = \bE(X_0|V) = \bE(X_1|V)$.  Now Fact \ref{maharam} can
be applied to each of the sets $X_0$ and $X_1$, yielding a
refinement of $X_0,X_1$ to a partition of $X$ by four sets
$X_{00},X_{01},X_{10},X_{11}$ from $U$, each of which satisfies
$(\chi_X)/4 = \bE(X_{ij}|V)$. Continuing to apply Fact
\ref{maharam} inductively yields a family $(X_\alpha)$ in $U$,
where $\alpha$ ranges over all finite sequences from $\{0,1\}$,
which has the following properties: (i) for each $n \geq 1$,
$\pi_n = \{X_\alpha\colon\alpha \mbox{ has length } n\}$ is a
partition of $X$; (ii) $\pi_{n+1}$ refines $\pi_n$ for each $n
\geq 1$; and (iii) if $\alpha$ has length $n$, then
$\bE(X_\alpha|V) = 2^{-n}\chi_X$.  The measure subspace of
$(X,U,\mu)$ generated by $V$ and the sets $X_\alpha$ is isomorphic
over $(X,V,\mu)$ to $(X,V,\mu) \otimes ([0,1],\cB,m)$.

So, we now assume that $(X,U,\mu)$ is isomorphic over $(X,V,\mu)$
to $(X,V,\mu) \otimes ([0,1],\cB,m)$. Note that $([0,1],\cB,m)$ is
isomorphic to $([0,1]^n,\cB_n,m_n)$, where $\cB_n$ is the Lebesgue
$\sigma$-algebra of the product space, and $m_n$ is the product
measure. Thus we may assume that $(X,U,\mu)$ is equal to
$(X,V,\mu)\otimes ([0,1]^n,\cB_n,m_n)$. Let $\cT_k$ be the trivial
$\sigma$-algebra on $[0,1]^k$: $\cT_k=\{\emptyset,[0,1]^k\}$. Let $V_i$
be the subalgebra $(X,V,\mu)\otimes ([0,1]^i,\cB_i,m_i) \otimes
([0,1]^{n-i},\cT_{n-i},m_{n-i})$ of $(X,W,\mu)\otimes
([0,1]^n,\cB_n,m_n)$ for each $i\leq n$. Then $(X,V_{i+1},\mu)$ is
atomless over $(X,V_{i},\mu)$ for all $i\leq n-1$.

Now we proceed inductively. By Fact \ref{BR}, there is $g_1\in
L_p(X,V_1,\mu)$ such that $\dist(f_1|V)=\dist(g_1|V)$. For the
induction step, suppose there are $g_1,...,g_l\in
L_p(X,V_{l},\mu)$ such that
$\dist(f_1,...,f_l|V)=\dist(g_1,...,g_l|V)$. Let $W_l$ be the
smallest $\sigma$-subalgebra of $W$ containing $V$ and making
$f_1,...,f_l$ measurable; let $U_l$ be the smallest
$\sigma$-subalgebra of $U$ containing $V$ and making $g_1,...,g_l$
measurable, so $U_l \subset V_l$. Then there is an isomorphism
from $(X,W_l,\mu)$ to $(X,U_l,\mu)$ which is equal to the identity
when restricted to $V$. Since $(X,V_{l+1},\mu)$ is atomless over
$(X,V_l,\mu)$, it is also atomless over $(X,U_l,\mu)$. By \ref{BR}
we can find $ g_{l+1}$ in $L_p(X,V_{l+1},\mu)$ such that
$\dist(f_1,...,f_{l+1}|V)=\dist(g_1,...,g_{l+1}|V)$.
\end{proof}

\begin{cor}\label{existence}
Let $(X,W,\mu)\subset (X,V,\mu)\subset (X,U,\mu)$ be measure spaces
such that $(X,U,\mu)$ is atomless over $(X,V,\mu)$. Let
$(X,W,\mu)\subset (X,V',\mu)$ be any other extension. Then for any
$\bar f=(f_1,\dots,f_n)\in L_p(X,V',\mu)$ there are
$\bar g=(g_1,\dots,g_n)\in L_p(X,U,\mu)$ such that
$\dist(g_1,\dots,g_n|W)=\dist(f_1,\dots,f_n|W)$ and
$\bE(\bar g^{-1}(B)|V)=\bE(\bar g^{-1}(B)|W)$ for any Borel set $B\subset
\mathbb{R}^n$.
\end{cor}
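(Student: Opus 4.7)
The plan is to reduce the corollary to Lemma \ref{BR-P} by first producing an intermediate sub-$\sigma$-algebra $U'$ with $W \subset U' \subset U$ such that $(X, U', \mu)$ is atomless over $(X, W, \mu)$ and is \emph{conditionally independent of $V$ over $W$}, meaning $\bE(h|V) = \bE(h|W)$ for every $h \in L_p(X, U', \mu)$. Granted such a $U'$, Lemma \ref{BR-P} applied to the atomless extension $(X, W, \mu) \subset (X, U', \mu)$ and the given extension $(X, W, \mu) \subset (X, V', \mu)$ produces $g_1, \dots, g_n \in L_p(X, U', \mu) \subset L_p(X, U, \mu)$ with $\dist(\bar g \,|\, W) = \dist(\bar f \,|\, W)$. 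For any Borel $B \subset \mathbb{R}^n$, the indicator $\chi_{\bar g^{-1}(B)}$ lies in $L_p(X, U', \mu)$, so the independence property immediately yields $\bE(\bar g^{-1}(B)|V) = \bE(\bar g^{-1}(B)|W)$, which is the remaining conclusion.

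To build $U'$ I would revisit the first part of the proof of Lemma \ref{BR-P}: since $(X, U, \mu)$ is atomless over $(X, V, \mu)$, an iterated application of Maharam's Lemma (Fact \ref{maharam}) embeds $(X, V, \mu) \otimes ([0,1], \cB, m)$ into $(X, U, \mu)$ over $(X, V, \mu)$. I take $U'$ to be the sub-$\sigma$-algebra of $U$ corresponding under this embedding to $W \otimes \cB$. It contains $W$ (identified with $W \otimes \cT$ for $\cT = \{\emptyset, [0,1]\}$) and is atomless over $W$ by Example \ref{lebesgue}. For the independence claim it suffices, by linearity and by density of simple tensors in $L_p(W \otimes \cB)$, to check the equality on products $h(x, t) = f(x) g(t)$ with $f \in L_p(X, W, \mu)$ and $g \in L_p([0,1], \cB, m)$; since $f$ is already $V$-measurable, a direct computation on rectangles $A \times [0,1]$ with $A \in V$ gives $\bE(h|V)(x) = f(x) \int g\, dm = \bE(h|W)(x)$.

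The main obstacle is pinpointing the correct intermediate algebra $U'$; once the product structure $(X, V, \mu) \otimes ([0,1], \cB, m)$ inside $(X, U, \mu)$ has been exploited to cleanly split off ``fresh randomness'' that is conditionally independent of $V$ over $W$, the rest is a routine invocation of Lemma \ref{BR-P}. An alternative would be to construct a common extension $V''$ of $V$ and $V'$ in which the two are conditionally independent over $W$, apply Lemma \ref{BR-P} with base $V$ to produce $\bar g \in L_p(X, U, \mu)^n$ with $\dist(\bar g \,|\, V) = \dist(\bar f \,|\, V)$, and then deduce both required conclusions from the tower property; this route is conceptually clean but requires a separate fiber-product construction, which seems heavier than simply carving $U'$ out of the given $(X, U, \mu)$.
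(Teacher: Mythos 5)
Your proof is correct, and it rests on the same two ingredients as the paper's own argument --- Lemma~\ref{BR-P} together with the conditional independence of $W \otimes \cB$ from $V$ over $W$ inside the product $(X,V,\mu)\otimes([0,1],\cB,m)$ --- but it deploys them in mirror image. The paper normalizes the \emph{source}: it first replaces $V'$ by $W\otimes\cB$ (itself an application of Lemma~\ref{BR-P} over $W$, via Example~\ref{lebesgue}), views the resulting $\bar f$ inside $V\otimes\cB$ as living over $V$, applies Lemma~\ref{BR-P} a second time with base $V$ to get $\bar g$ with $\dist(\bar g|V)=\dist(\bar f|V)$, and then extracts both conclusions from the identity $\bE(\bar f^{-1}(B)|V)=\bE(\bar f^{-1}(B)|W)$ together with the tower property. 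You normalize the \emph{target}: you carve out of $U$ a copy $U'$ of $W\otimes\cB$ that is atomless over $W$ and conditionally independent of $V$ over $W$, and apply Lemma~\ref{BR-P} once, with base $W$, landing in $U'$. Your version makes the second conclusion completely transparent ($\bar g$ is $U'$-measurable by construction) at the cost of exhibiting and verifying $U'$, which sends you back into the Maharam construction (Fact~\ref{maharam}) from the first half of the proof of Lemma~\ref{BR-P}; the paper's version needs no explicit intermediate algebra but must pass from a statement over the larger algebra $V$ down to the required statements over $W$. One point worth spelling out in your write-up: your rectangle computation takes place in the product space $(X,V,\mu)\otimes([0,1],\cB,m)$, so you should record that for $h$ measurable with respect to the embedded copy of $V\otimes\cB$, the conditional expectations $\bE(h|V)$ and $\bE(h|W)$ computed in the ambient $(X,U,\mu)$ agree with those computed inside the embedded product; this is immediate from the defining property in Fact~\ref{fct:CondExp}, but it is the step that transports your verification into $\cU$.
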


\begin{proof}
By Lemma \ref{BR-P} and Example \ref{lebesgue}, we may assume that
$(X,V',\mu)=(X,W,\mu)\otimes ([0,1],\cB,m)$, where $([0,1],\cB,m)$ is
the standard Lebesgue space. We view $(X,V,\mu)\otimes ([0,1],\cB,m)$
as an extension of $(X,V,\mu)$, and also as an extension of
$(X,V',\mu)$.  Hence we can regard $\bar f=(f_1,\dots,f_n)\in
L_p(X,V',\mu)$ as elements of the $L_p$-space of an extension of
$(X,V,\mu)$. By Lemma \ref{BR-P} there are $\bar g=(g_1,\dots,g_n)\in
L_p(X,U,\mu)$ with $\dist(g_1,\dots,g_n|V)=\dist(f_1,\dots,f_n|V)$.
Note also that for any Borel set $B\subset \mathbb{R}^n$, we have
$\bE(\bar g^{-1}(B)|V)=\bE(\bar f^{-1}(B)|V)=\bE(\bar f^{-1}(B)|W)$.

\end{proof}

\section{Basic model theory}

We use the model-theoretic tools developed in  \cite{Henson-Iovino:Ultraproducts} to study
normed space structures.  We use the word \emph{formula} to mean
\emph{positive bounded formula} and use the semantics of
\emph{approximate satisfaction} as defined there.  In particular,
the \emph{type} of a tuple over a set, denoted $\tp(\bar f,/A)$,
is the collection of such
formulas (with parameters from the set) approximately satisfied by
the tuple.
We also write $\bar f \equiv_C \bar g$ in order to say that
$\tp(\bar f/C) = \tp(\bar g/C)$.

We study the model theory of $L_p$ Banach lattices in the signature
$\mathcal{L}=\{0,-,(f_q\mid q \in \mathbb{Q}),+,\wedge,\vee,\| \ \
\|\}$, where each $f_q$ is interpreted as the unary function of
scalar multiplication by $q$. Terms in this signature correspond to
lattice polynomials; atomic formulas are of the forms $\|t\| \leq r$
and $s \leq \|t\|$ where $t$ is a term and $r,s$ are rational
numbers. By $\Th_\cA(L_p(X,U,\mu))$ we mean the approximate positive
bounded theory of the Banach lattice $L_p(X,U,\mu)$ in this
signature. See \cite{Henson-Iovino:Ultraproducts} for the necessary background.

\begin{rstn}\label{atomless}
In the rest of this paper we only consider the model theory of $L_p$ Banach lattices that are based on \emph{atomless} measure spaces.
\end{rstn}

Because of the direct sum decomposition of a measure space into its
atomless and purely atomic parts, it is routine to extend what is
done here to obtain analogous results in the general case.  See the
end of this section for a brief discussion.

\begin{fct}(See Theorem 2.2 in \cite{Henson:NonstandardHulls})
If $(X,U,\mu)$ and $(Y,V,\nu)$ are atomless measure spaces, then
their $L_p$ Banach lattices are elementarily equivalent;
\textit{i.e.}, \newline
\centerline{$\Th_\cA(L_p(X,U,\mu)) = \Th_\cA(L_p(Y,V,\nu))$.}
\end{fct}

\begin{fct}(Axiomatizability, see Example 13.4 in \cite{Henson-Iovino:Ultraproducts}
and Theorem 2.2 in \cite{Henson:NonstandardHulls}) Let $(X,U,\mu)$ be an atomless
measure space. Let $M$ be a Banach space structure such that
$M\models_\cA \Th_\cA(L_p(X,U,\mu))$. Then $M$ is an atomless
$L_p$ Banach lattice; \textit{i.e.}, there is an atomless measure
space $(Y,V,\nu)$ such that $M$ is isomorphic to $L_p(Y,V,\nu)$.
\end{fct}

\begin{dfn}
Let $\kappa$ be a cardinal larger than $2^{\aleph_0}$. We say that
a normed space structure $\cU$ is a \emph{$\kappa$-universal domain}
if it is $\kappa$-strongly homogeneous and $\kappa$-saturated. We
call a subset $C\subset \cU$ \emph{small} if $|C|<\kappa$.
\end{dfn}
Thus, if $\bar f$,$\bar g\in \cU^n$ are two tuples and $C \subseteq \cU$ is a set
(by which we always implicitly mean that $|C| < \kappa$):
$\bar f \equiv_C \bar g$ if and only if there exists an automorphism $\theta$ of
the Banach lattice $\cU$ which fixes $C$ (in symbols: $\theta \in \Aut(\cU/C)$)
sending $\bar f$ to $\bar g$.

For each cardinal $\kappa$ and each consistent set of positive
bounded sentences $\Sigma$, there exists a normed space structure
that is a model of $\Sigma$ and a $\kappa$-universal domain; see
\cite[Corollary 12.3 and Remark 12.4]{Henson-Iovino:Ultraproducts}.

For the remainder of this section, $\cU$ will be a $\kappa$-universal
domain for the theory of atomless $L_p$ Banach lattices, where
$\kappa$ is much larger than any set or collection of variables or
constants under consideration. Since $\cU$ is at least
$\omega_1$-saturated, $\cU$ is a metrically complete structure and
there is a measure space $(X,U,\mu)$ such that $\cU=L_p(X,U,\mu)$.
Unless stated otherwise, sets of parameters such as $A,B,C\subset
\cU$ are required to be small.

\begin{fct}\label{sepcat} (Separable categoricity)
Let $M\models_\cA \Th_\cA(\cU)$ be separable and complete. Then
$M$ is isomorphic to $L_p([0,1],\cB,m)$, where $\cB$ is the
$\sigma$-algebra of Lebesgue measurable sets and $m$ is Lebesgue
measure.
\end{fct}

Note that Fact \ref{sepcat} need not hold if we add constants to
the language. By Fact \ref{QE} below, we get
$(L_p([0,1],\cB,m),f))\equiv_\cA (L_p([0,1],\cB,m),g))$, where $f$
and $g$ are any two norm 1, positive elements of
$L_p([0,1],\cB,m)$. However, there are two possible isomorphism
types of such structures, depending on whether or not the support
of the adjoined function has measure 1 or not.

\begin{fct}\label{QE} (Quantifier elimination, see Example 13.18
in \cite{Henson-Iovino:Ultraproducts}) Let $\bar a,\bar b \subset \cU$. If $\bar a$ and $\bar b$ have the
same quantifier-free type in $\cU$, then $\bar a$ and $\bar b$ have the
same type in $\cU$. That is, if $\|t(\bar a)\|=\|t(\bar b)\|$ for every
term $t$, then $\bar a$ and $\bar b$ have the same type.
\end{fct}

Note that Fact \ref{QE} fails to be true without the assumption that
$\cU$ is atomless; atoms and non-atoms can have the same
quantifier-free type, but they never have the same type.

An important tool for studying non-dividing (which we do in the
next section) is a characterization of types in terms of
conditional distributions. The following results were proved by
Markus Pomper \cite[Theorems 6.3.1 and 6.4.1]{Pomper:PhD} in his thesis.
We give alternate proofs.

\begin{prp}
  \label{prp:TypeIsDist}
  Let $B$ be a  sublattice of\ \ $\cU$ and let
  $(Y,V,\mu)\subset (X,U,\mu)$ be measure spaces such that
  $B=L_p(Y,V,\mu)$ and $\cU=L_p(X,U,\mu)$.
  Let $\bar f,\bar h \in (B^{\perp \perp})^n$.
  Then the following are equivalent:
  \begin{enumerate}
  \item $\bar f\equiv_B \bar h$.
  \item $\sum \lambda_if_i \equiv_B \sum \lambda_ih_i$ for all $\bar \lambda \in \bR^n$.
  \item $\dist(\bar f|V)=\dist(\bar h|V)$.
  \end{enumerate}
\end{prp}
\begin{proof}
  \begin{cycprf}
  \item[\impnext] Immediate.
  \item[\impnext]
    The joint conditional
    distribution of a tuple is determined by the family of conditional
    distributions of linear combinations of the coordinates.
    (This is
    proved using characteristic functions or Laplace transforms of
    distributions; see \cite[Theorem 5.3]{Kallenberg:ModernProbability}.)
    Thus it will be enough to show that
    $f \equiv_B h$ implies that $\dist(f|V) = \dist(h|V)$ (where
    $f,h \in B^{\perp\perp}$).
    By splitting into
    positive and negative parts, we may assume that $f = f^+$.
    Moreover, since all measure spaces are decomposable here, there is
    no loss of generality in assuming $\mu(X) < \infty$.  Hence also
    $\mu(Y) < \infty$ and the characteristic function $\chi_Y$ is in
    $B$.

    Assume first that $f=\chi_A$ for some set $A\in U$; because $f \in
    B^{\perp\perp}$ we have $A \subset Y$ and hence $f$ satisfies $f \wedge (\chi_Y - f) = 0$.
This can be equivalently expressed by the family of conditions $f_C
\wedge (\chi_C - f_C) = 0$, where $C$ ranges over the sets of finite
measure in $V$ and $f_C$ denotes the restriction of $f$ to $C$,
which is $f \wedge \chi_C$ since $f$ is a characteristic function.
In particular, this condition on $f$ is expressible by a family of
conditions whose parameters come from $B$.
    Assuming $\tp(h/B)=\tp(f/B)$ and $h \in B^{\perp\perp}$, it follows that
    $h$ is the characteristic function of some subset of $Y$ and that
    $\|f\|=\|h\|$. It is further easy to verify that $\dist(f|V)=\dist(h|V)$.

    Now assume that $f$ is a simple positive function, written as
    $f=r_1\chi_{A_1}+\dots+r_n\chi_{A_m}$, where $A_1,\dots,A_m\in U$
    are disjoint sets of positive measure and $0<r_1<\dots <r_m$ are
    in $\mathbb{R}$; by assumption, each $A_i$ is a subset of $Y$.
    Then $\tp(f/B)$ contains formulas describing the existence of $m$
    positive functions $g_1,...,g_m$ with disjoint supports such that
    $f=r_1g_1+\dots+r_ng_m$ and $g_i$ is the characteristic
    function of a subset of $Y$ with $\|g_i\|^p=\mu(A_i)$, for each
    $i=1,\dots,m$. By saturation of $\cU$, if $\tp(h/B)=\tp(f/B)$, then
    $h$ must also be a simple function with the same distribution as
    $f$ over $V$.

    For a general function $f$, the type $\tp(f/B)$ describes the
    existence of a sequence $(f_i:i\in \omega)$ of simple functions
    converging in norm to $f$. If $\tp(h/B)=\tp(f/B)$, the saturation of
    $\cU$ ensures the existence of a sequence of functions $(h_i:i\in
    \omega)$ such that $\tp(h_i,h/B)=\tp(f_i,f/B)$ for all $i \in
    \omega$.  Then $(h_i:i\in \omega)$ converges to $h$ in norm and
    $\dist(f_i|V)=\dist(h_i|V)$ for all $i \in \omega$. It follows that
    $\dist(f|V)=\dist(h|V)$.
  \item[\impfirst]
    Assume that $\dist(\bar f|V)=\dist(\bar h|V)$. By quantifier elimination,
    to show that $\tp(\bar f/B)=\tp(\bar h/B)$ it suffices to prove that for
    any $\bar g\in B^l$ and any term $t(\bar x,\bar y)$, we have
    $\|t(\bar f,\bar g)\|^p=\|t(\bar h,\bar g)\|^p$.

    Let $\nu$ be the measure on Borel subsets $D$ of $\mathbb{R}^{n+l}$
    defined by $\nu(D)=\mu\{x\in X: (\bar f,\bar g)(x)\in D\}$. Since
    $\dist(\bar f,\bar g)=\dist(\bar h,\bar g)$, we have that $\nu(D)=\mu\{x\in X:
    (\bar h,\bar g)(x)\in D\}$ for any Borel $D\subset \mathbb{R}^{n+l}$.
    Then, by the change of variable formula,

    $\int_X |t(\bar f(x),\bar g(x))|^p
    d\mu(x)=\int_{\mathbb{R}^{n+l}}|t(\bar r,\bar s)|^pd\nu(\bar r,\bar s)=\int_X
    |t(\bar h(x),\bar g(x))|^p d\mu(x)$.
  \end{cycprf}
\end{proof}

\begin{lem}
  \label{lem:TypeInOutBand}
  Let $\bar f \in \cU^n$ be a tuple, $C \leq \cU$ a Banach sublattice.
  We can write each $f_i$ uniquely as $f_i^1 + f_i^2$ where
  $f_i^1 \in  C^{\perp\perp}$ and $f_i^2 \in C^\perp$.
  Then $\tp(\bar f/C)$ determines and is determined by the pair
  $\tp(\bar f^1/C),\tp(\bar f^2)$.

  In other words, for $\bar f,\bar g \in \cU^n$ we have
  $\bar f \equiv_C \bar g$ if and only if
  both $\bar f^1 \equiv_C \bar g^1$ and $\bar f^2 \equiv \bar g^2$.
\end{lem}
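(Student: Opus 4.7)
The plan is to prove each direction by a different technique: the forward implication by an automorphism argument, and the backward implication by an application of quantifier elimination (Fact \ref{QE}) combined with the abstract $L_p$ identity.

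For the forward direction, suppose $\theta \in \Aut(\cU/C)$ sends $\bar f$ to $\bar g$. The orthogonal complement $C^\perp = \{x \in \cU : |x| \wedge |c| = 0 \text{ for all } c \in C\}$ is defined using only the lattice operations and the set $C$, so any lattice automorphism fixing $C$ pointwise preserves $C^\perp$, and hence also $C^{\perp\perp} = (C^\perp)^\perp$. Uniqueness of the band decomposition then forces $g_i^1 = \theta(f_i^1)$ and $g_i^2 = \theta(f_i^2)$, so the same $\theta$ witnesses both $\bar f^1 \equiv_C \bar g^1$ and $\bar f^2 \equiv \bar g^2$.

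For the backward direction I would invoke quantifier elimination and show that for every term $t(\bar x, \bar y)$ and every tuple $\bar c$ from $C$, the value $\|t(\bar f, \bar c)\|$ is determined by $\tp(\bar f^1/C)$ together with $\tp(\bar f^2)$. The key observation is that the band projections $\pi_1 \colon \cU \to C^{\perp\perp}$ and $\pi_2 \colon \cU \to C^\perp$ are lattice homomorphisms commuting with scalar multiplication (standard in $L_p$, since every band is a projection band). Because $\pi_1$ fixes $C$ pointwise and $\pi_2$ annihilates $C$, applying the two projections to $t(\bar f, \bar c)$ and summing yields
\[ t(\bar f, \bar c) = t(\bar f^1, \bar c) + t(\bar f^2, \bar 0), \]
with the two summands lying in the orthogonal bands $C^{\perp\perp}$ and $C^\perp$. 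The abstract $L_p$ property then gives
\[ \|t(\bar f, \bar c)\|^p = \|t(\bar f^1, \bar c)\|^p + \|t(\bar f^2, \bar 0)\|^p, \]
and by the hypotheses $\bar f^1 \equiv_C \bar g^1$ and $\bar f^2 \equiv \bar g^2$, each summand equals the corresponding one with $\bar g$ in place of $\bar f$. Thus $\|t(\bar f, \bar c)\| = \|t(\bar g, \bar c)\|$, and Fact \ref{QE} closes the argument.

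The only mild subtlety is the decomposition identity $t(\bar f, \bar c) = t(\bar f^1, \bar c) + t(\bar f^2, \bar 0)$ for an arbitrary lattice term. This is nothing more than the assertion that $\pi_1$ and $\pi_2$ each commute with every operation appearing in terms, so $\pi_i(t(\bar f, \bar c)) = t(\pi_i \bar f, \pi_i \bar c)$ for $i=1,2$; no serious obstacle arises.
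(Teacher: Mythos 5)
Your proof is correct, and the forward direction is exactly the paper's argument (an automorphism fixing $C$ pointwise preserves $C^\perp$ and $C^{\perp\perp}$, hence respects the band decomposition). The backward direction, however, takes a genuinely different route. The paper first upgrades $\bar f^2 \equiv \bar g^2$ to $\bar f^2 \equiv_C \bar g^2$ (via quantifier elimination, using that both tuples lie in $C^\perp$), obtains automorphisms $\theta^1, \theta^2 \in \Aut(\cU/C)$ moving $\bar f^1 \mapsto \bar g^1$ and $\bar f^2 \mapsto \bar g^2$, and glues them along the lattice direct sum $\cU = C^{\perp\perp} \oplus C^\perp$ into a single automorphism sending $\bar f$ to $\bar g$. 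You instead stay entirely at the level of quantifier-free data: you use that the band projections are lattice homomorphisms fixing $C$ (resp.\ annihilating $C$) to split $t(\bar f,\bar c)$ into disjoint pieces $t(\bar f^1,\bar c) + t(\bar f^2,\bar 0)$, apply the abstract $L_p$ identity, and invoke Fact \ref{QE} once at the end. Both arguments are sound; your computation is slightly more explicit about the key fact that lattice terms commute with band projections (which the paper's intermediate QE step for $\bar f^2 \equiv_C \bar g^2$ also tacitly uses), while the paper's gluing argument is shorter and reuses the automorphism machinery already set up for the forward direction. The only points worth making explicit in your write-up are that the two summands lie in the mutually orthogonal bands $C^{\perp\perp}$ and $C^\perp$ (so the $p$-additivity of the norm applies to the disjoint pair), and that equality of types over $C$ reduces, via QE, to equality of $\|t(\cdot,\bar c)\|$ over all finite $\bar c$ from $C$ --- both of which you do address.
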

\begin{proof}
  Notice that an automorphism $\theta\in\Aut(\cU/C)$
  of $\cU$ which fixes $C$ pointwise must
  fix $C^{\perp\perp}$ and $C^\perp$ set-wise.
  Thus, if such an automorphism sends $\bar f$ to $\bar g$ it must
  also send  $\bar f^1$ to $\bar g^1$ and $\bar f^2$ to $\bar g^2$.

  Conversely, assume $\bar f^1 \equiv_C \bar g^1$
  and $\bar f^2 \equiv \bar g^2$.
  Since $\bar f^2$ and $\bar g^2$ are both in $C^\perp$, it follows by
  quantifier elimination that $\bar f^2 \equiv_C \bar g^2$.
  Thus we have $\theta^1,\theta^2 \in \Aut(\cU/C)$ such that
  $\theta^1\colon \bar f^1 \mapsto \bar g^1$ and $\theta^2\colon \bar f^2 \mapsto \bar g^2$.
  Define $\theta$ by letting it act as $\theta^1$ on $C^{\perp\perp}$ and
  as $\theta^2$ on $C^\perp$, so $\theta \in \Aut(\cU/C)$ and $\theta(\bar f) = \bar g$.
\end{proof}

\begin{lem}
  \label{lem:TupleType}
  Let $\bar f \in \cU^n$ be a tuple, $C \leq \cU$ a Banach sublattice.
  Then $\tp(\bar f/C)$ depends only on the mapping associating to each
  term $t(\bar x)$ in $n$ free variables
  the type $\tp(t(\bar f)/C)$.

  In other words, for $\bar f,\bar g \in \cU^n$ we have
  $\bar f \equiv_C \bar g$ if and only if
  $t(\bar f) \equiv_B t(\bar g)$ for every term $t$.
\end{lem}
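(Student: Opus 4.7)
\textbf{Proof plan for \fref{lem:TupleType}.}
The forward direction is immediate: any automorphism of $\cU$ fixing $C$ that sends $\bar f$ to $\bar g$ sends $t(\bar f)$ to $t(\bar g)$ for every term $t$, so $t(\bar f) \equiv_C t(\bar g)$. For the converse, the plan is to split via the band decomposition of \fref{lem:TypeInOutBand} and then handle the in-band and out-of-band parts separately using, respectively, \fref{prp:TypeIsDist} and quantifier elimination (\fref{QE}).

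Write $\bar f = \bar f^1 + \bar f^2$ and $\bar g = \bar g^1 + \bar g^2$ with $\bar f^1,\bar g^1 \in C^{\perp\perp}$ and $\bar f^2,\bar g^2 \in C^\perp$. The first key observation is that the band projection $\pi \colon \cU \to C^{\perp\perp}$ is a lattice homomorphism in the $L_p$ setting (equivalently, $C^{\perp\perp}$ and $C^\perp$ are disjointly supported, so lattice operations distribute over the direct-sum decomposition). Hence for every lattice term $t$ in $n$ variables,
\[
t(\bar f)^1 = t(\bar f^1), \qquad t(\bar f)^2 = t(\bar f^2),
\]
and similarly for $\bar g$. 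Now apply \fref{lem:TypeInOutBand} to the \emph{single} element $t(\bar f)$: its type over $C$ determines (and is determined by) $\tp(t(\bar f)^1/C) = \tp(t(\bar f^1)/C)$ together with $\tp(t(\bar f)^2) = \tp(t(\bar f^2))$. Thus the hypothesis $t(\bar f) \equiv_C t(\bar g)$ for every term $t$ yields, for every term $t$,
\[
t(\bar f^1) \equiv_C t(\bar g^1) \qquad \text{and} \qquad t(\bar f^2) \equiv t(\bar g^2).
\]

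Now the two pieces can be finished off. For the out-of-band part, $\bar f^2,\bar g^2 \in C^\perp$ and the condition above gives in particular that $\|t(\bar f^2)\| = \|t(\bar g^2)\|$ for every term $t$; quantifier elimination (\fref{QE}) then gives $\bar f^2 \equiv \bar g^2$. For the in-band part, $\bar f^1,\bar g^1 \in C^{\perp\perp}$, and specialising $t$ to linear terms $t(\bar x) = \sum \lambda_i x_i$ gives $\sum \lambda_i f_i^1 \equiv_C \sum \lambda_i g_i^1$ for every $\bar\lambda \in \bR^n$; the implication (ii)$\Rightarrow$(i) of \fref{prp:TypeIsDist} then yields $\bar f^1 \equiv_C \bar g^1$. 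Combining the two conclusions through \fref{lem:TypeInOutBand} gives $\bar f \equiv_C \bar g$, completing the proof.

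The only non-routine step is the first one, namely that the band projection onto $C^{\perp\perp}$ commutes with lattice terms so that $t(\bar f)^1 = t(\bar f^1)$; without this, one cannot transfer the hypothesis term-by-term to $\bar f^1$ and $\bar f^2$ and the reduction to \fref{prp:TypeIsDist} and \fref{QE} breaks down. This is however a standard fact about band projections in $L_p$-lattices and follows directly from the pointwise description of $\wedge$ and $\vee$ on the disjoint supports of $C^{\perp\perp}$ and $C^\perp$.
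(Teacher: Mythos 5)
Your proof is correct and follows essentially the same route as the paper's: both rest on the fact that the band decomposition commutes with lattice terms, reduce via \fref{lem:TypeInOutBand} to the two components, and finish the $C^{\perp\perp}$ part with \fref{prp:TypeIsDist} (linear terms) and the $C^\perp$ part with quantifier elimination. The only difference is presentational --- the paper argues contrapositively, producing a single witnessing term $t$ with $t(\bar f)\not\equiv_C t(\bar g)$, while you argue directly --- so this is the same proof.
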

\begin{proof}
  Assume $\bar f \not\equiv_C \bar g$.
  We will follow the notation of \fref{lem:TypeInOutBand}.

  If $\bar f^1 \not\equiv_C \bar g^1$ then by \fref{prp:TypeIsDist}
  there is a tuple
  $\bar \lambda \in \bR^n$ such that $\sum \lambda_if_i^1 \not\equiv_C \sum \lambda_ig_i^1$.
  Notice that $\sum \lambda_if_i^1 = \left(\sum \lambda_if_i\right)^1$, etc., so by
  \fref{lem:TypeInOutBand} we have
  $\sum \lambda_if_i \not\equiv_C \sum \lambda_ig_i$.

  If $\bar f^1 \equiv_C \bar g^1$ then by \fref{lem:TypeInOutBand} we
  have $\bar f^2 \not\equiv \bar g^2$.
  By quantifier elimination there is a term $t$ such that
  $\|t(\bar f^2)\| \neq \|t(\bar g^2)\|$.
  Again we have $t(\bar f^2) = t(\bar f)^2$, etc.,
  so: $\|t(\bar f)\|^p = \|t(\bar f^1)\|^p + \|t(\bar f^2)\|^p$ and
  similarly for $\bar g$.
  But $\bar f^1 \equiv_C \bar g^1$ implies that
  $\|t(\bar f^1)\| = \|t(\bar g^1)\|$,
  so $\|t(\bar f)\| \neq \|t(\bar g)\|$.
  Thus $t(\bar f) \not\equiv_C t(\bar g)$.
\end{proof}

\begin{dfn}
  Let $A \subset \cU$ and $f \in \cU$. We say $f$ is in the\
  \emph{definable closure} of $A$ and write $f\in \dcl(A)$ if for
  any automorphism $\Phi\in \Aut(\cU)$, if $\Phi$ fixes $A$ pointwise
  then $\Phi(f)=f$.
\end{dfn}

\begin{fct}
Let $A \subset \cU$. The definable closure of $A$ in $\cU$ is the
sublattice of $\cU$ generated by $A$.
\end{fct}

\begin{lem}
Let $f \in \cU$ and let $A\subset \cU$. If $f\not \in
\dcl(A)$, then the set of realizations of $\tp(f/A)$ is large;
that is, it has cardinality greater than or equal to $\kappa$.
\end{lem}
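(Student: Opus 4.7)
Let $C = \dcl(A)$ denote the sublattice of $\cU$ generated by $A$; by hypothesis $f \notin C$. Following \fref{lem:TypeInOutBand}, write $f = f^1 + f^2$ with $f^1 \in C^{\perp\perp}$ and $f^2 \in C^\perp$. The hypothesis splits into two cases: either (a) $f^2 \neq 0$, or (b) $f^2 = 0$ and $f = f^1 \in C^{\perp\perp} \setminus C$.

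In case (a) the plan is to construct by transfinite recursion a family $(h_\alpha)_{\alpha < \kappa}$ of elements of $C^\perp$, each satisfying $h_\alpha \equiv f^2$ (over the empty set) and $|h_\alpha| \wedge |h_\beta| = 0$ for $\alpha \neq \beta$. At stage $\alpha$ the relevant partial type is over the small set $A \cup \{h_\beta : \beta < \alpha\}$; its finite satisfiability reduces to the claim that for any $b \in \cU$ and any non-negative reals $r,s$ there exist lattice-disjoint positive elements $u,v$ of norms $r,s$ with $u \wedge |b| = v \wedge |b| = 0$. This holds approximately in any atomless $L_p$-space (e.g.\ take suitable multiples of characteristic functions on small disjoint sets where $|b|$ is small) and is then realised exactly in $\cU$ by $\kappa$-saturation. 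Setting $f_\alpha = f^1 + h_\alpha$, by \fref{lem:TypeInOutBand} and \fref{QE} one has $f_\alpha \equiv_A f$, while lattice-disjointness gives $\|f_\alpha - f_\beta\| = \|h_\alpha - h_\beta\| = 2^{1/p}\|f^2\|$ for $\alpha \neq \beta$, producing $\kappa$ pairwise distinct realisations.

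In case (b) fix a measure-space representation $\cU = L_p(X,U,\mu)$ with $C = L_p(Y,V,\mu)$ as in \fref{representation}. By \fref{prp:TypeIsDist}, realisations of $\tp(f/C)$ in $\cU$ are exactly those $g \in C^{\perp\perp}$ with $\dist(g|V) = \dist(f|V)$, and this conditional distribution is nondegenerate because $f \notin C$ forces $\|f - \bE_C f\| > 0$. The plan is to build by transfinite recursion a family $(g_\alpha)_{\alpha < \kappa}$ with $\dist(g_\alpha|V) = \dist(f|V)$ and the $g_\alpha$'s pairwise conditionally independent over $V$, using \fref{existence} to establish the required consistency at each stage and $\kappa$-saturation of $\cU$ to realise the resulting partial type inside $\cU$; the atomless-over-$V$ hypothesis needed by \fref{existence} is arranged by passing, if necessary, to a richer representation of $\cU$. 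A brief analytic computation shows that two conditionally independent copies of a nondegenerate $\dist(f|V)$ are separated by a fixed positive distance (for $p=2$ exactly $\sqrt{2}\,\|f - \bE_C f\|$, with analogous quantitative bounds for general $p$), so the $g_\alpha$'s are pairwise distinct and each realises $\tp(f/A)$ by \fref{prp:TypeIsDist}.

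The main obstacle is case (b): one must verify that at each stage the partial type combining the distributional condition $\dist(x|V) = \dist(f|V)$ with conditional independence from the previously constructed $g_\beta$'s is consistent over a small parameter set, and one must arrange the atomless-over-$V$ hypothesis of \fref{existence} inside the $\kappa$-saturated $\cU$. Case (a) is by contrast quite direct, using atomlessness alone to produce disjoint elements of the prescribed type in $C^\perp$.
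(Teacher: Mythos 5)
Your proof is correct and, in its main case (b), follows essentially the same route as the paper: produce many distinct elements with the same conditional distribution as $f$ over $V$ via the Berkes--Rosenthal machinery (Lemma~\ref{BR-P}, Corollary~\ref{existence}) and recognize them as realizations of $\tp(f/A)$ through \fref{prp:TypeIsDist}. You are in fact more careful than the paper's three-line argument, which tacitly treats $f$ as lying in $\dcl(A)^{\perp\perp}$ (your case (a) handles the orthogonal component explicitly via \fref{lem:TypeInOutBand}) and does not spell out how ``arbitrarily many'' becomes $\kappa$ many; your band decomposition, quantitative separation of the realizations, and transfinite recursion with $\kappa$-saturation fill exactly those gaps.
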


\begin{proof}
  Let $f\in \cU$, $A\subset \cU$.
  As $\dcl(A)$ is a sublattice of $\cU$, we may assume that
  $\dcl(A)=L_p(Y,V,\mu)$, $(Y,V,\mu)\subset (X,U,\mu)$ and
  $\cU=L_p(X,U,\mu)$.
  If $f\not \in \dcl(A)^n$ then by Lemma \ref{BR-P} we can find
arbitrarily many elements $f'\subset \cU$ such that $f'\neq f$ and
$\dist(f|V)=\dist(f'|V)$. By the previous facts, this shows that the
set of realizations of  $\tp(f/A)$ is large.
\end{proof}

Finally we show stability. We first recall two definitions:

\begin{dfn}
Let $A \subset \cU$; consider $\bar f,\bar g \in \cU^n$ and set $t =
\tp(\bar f/A)$ and $s = \tp(\bar g/A)$.  We define $d(t,s)$ to be the
infimum of all distances $\max\{\| f_i' - g_i'\| \colon 1 \leq i
\leq n \}$ where $\bar f',\bar g' \in \cU^n$, $t = \tp(\bar f'/A)$ and $s =
\tp(\bar g'/A)$.
This defines a metric on the space of $n$-types over
$A$.
\end{dfn}

\begin{dfn} \cite{Iovino:StableBanach}
  Let $\lambda<\kappa$ be an infinite cardinal. We say that
  $\Th_\cA(\cU)$ is \emph{$\lambda$-stable}
  (or \emph{metrically $\lambda$-stable})
  if for any $A\subset \cU$ of cardinality $\leq \lambda$, there is a
  subset of the spaces of types over $A$ that is dense with respect to
  the metric on the space of types.
\end{dfn}

\begin{thm}[Henson \cite{Henson:SeparableBanachSpaces}]
The theory $\Th_A(\cU)$ is $\omega$-stable.
\end{thm}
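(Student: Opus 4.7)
Given a countable $A \subseteq \cU$, let $C \leq \cU$ be the separable sublattice generated by $A$; since $C \subseteq \dcl(A)$, types over $A$ coincide with types over $C$. The plan is to build a single separable sublattice $\cU_0 \leq \cU$ containing $C$ such that, for every $n$ and every $\bar f \in \cU^n$, the type $\tp(\bar f/C)$ has a realization in $\cU_0^n$. Once $\cU_0$ is constructed, the $1$-Lipschitz map $\bar g \mapsto \tp(\bar g/C)$ (using $d(\tp(\bar g/C),\tp(\bar g'/C)) \leq \max_i \|g_i - g_i'\|$) sends a countable norm-dense subset of $\cU_0$ onto a countable dense subset of each $n$-type space, which establishes $\omega$-stability.

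Identify $\cU = L_p(X,U,\mu)$ and $C = L_p(Y,V,\mu)$ via the representation theorem. I construct $\cU_0 = C_0 \oplus C_0^\perp$ in two pieces. For the in-band part: iterate Maharam's Lemma (\ref{maharam}) inside $\cU$ to build a separable sublattice $C_0 \subseteq C^{\perp\perp}$ containing $C$ and isomorphic over $C$ to $L_p(Y \times [0,1], V \otimes \cB, \mu \otimes m)$; this $C_0$ is atomless over $C$ by Example~\ref{lebesgue}. For the orthogonal part: $C^\perp$ is itself an atomless $L_p$ Banach lattice (being a band in the atomless $L_p$-lattice $\cU$), so $\omega$-saturation of $\cU$ yields a separable sublattice $C_0^\perp \leq C^\perp$ isomorphic to $L_p([0,1],\cB,m)$.

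Given $\bar f = \bar f^1 + \bar f^2 \in \cU^n$ (with $\bar f^1 \in (C^{\perp\perp})^n$ and $\bar f^2 \in (C^\perp)^n$), Lemma~\ref{BR-P} produces $\bar g^1 \in C_0^n$ with $\dist(\bar g^1|V) = \dist(\bar f^1|V)$, hence $\bar g^1 \equiv_C \bar f^1$ by \fref{prp:TypeIsDist}. For the orthogonal part, the separable Banach sublattice of $C^\perp$ generated by $\bar f^2$ embeds as a Banach sublattice into $L_p([0,1]) \cong C_0^\perp$ (a classical universality fact for separable $L_p$-lattices), yielding $\bar g^2 \in (C_0^\perp)^n$ with the same quantifier-free type as $\bar f^2$, so $\bar g^2 \equiv \bar f^2$ by Fact~\ref{QE}. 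Combining the two halves via \fref{lem:TypeInOutBand}, the tuple $\bar g = \bar g^1 + \bar g^2 \in \cU_0^n$ satisfies $\bar g \equiv_C \bar f$. The main obstacle is the joint nature of type approximation: a single $\cU_0$ must simultaneously capture types of tuples of every length, which forces me to assemble both components uniformly — relying on the joint formulation of Berkes--Rosenthal (Lemma~\ref{BR-P}) for the in-band direction and on universality of $L_p([0,1])$ among separable $L_p$-lattices for the orthogonal direction.
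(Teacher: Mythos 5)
Your proof is correct and follows the same basic strategy as the paper: split a tuple into its components in $\dcl(A)^{\perp\perp}$ and $\dcl(A)^{\perp}$, realize the in-band part inside the separable lattice $L_p\big((Y,V,\mu)\otimes([0,1],\cB,m)\big)$ via Berkes--Rosenthal, and treat the orthogonal band separately. Where you go beyond the paper is in handling $n$-types uniformly: the paper's proof is written only for $1$-types (using the one-dimensional Fact~\ref{BR} for the in-band part, and the explicit observation that a $1$-type in $\dcl(A)^\perp$ is determined by the two numbers $\|g^+\|,\|g^-\|$, so that $\{\tp(c_1\chi_B-c_2\chi_C)\colon c_1,c_2\in\bQ^+\}$ is a countable dense family), and leaves the extension to $n$-types implicit. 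You instead invoke the joint version Lemma~\ref{BR-P} for the in-band component and, for the orthogonal component, the lattice-universality of $L_p[0,1]$ among separable abstract $L_p$-lattices together with Fact~\ref{QE}; this is a genuinely cleaner way to get density of types of \emph{tuples} in $C^\perp$, which the paper's two-parameter trick does not directly give. Assembling the two halves by \fref{lem:TypeInOutBand} and using that $\bar g\mapsto\tp(\bar g/C)$ is $1$-Lipschitz is exactly what is needed, so your argument is a mild strengthening of the paper's, at the cost of importing the (true, classical) universality fact for separable $L_p$-lattices, which the paper avoids.
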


\begin{proof}
Let $A\subset \cU$ be countable infinite. Then $\dcl(A)$ is a
sublattice of $\cU$ and thus we can find measure spaces
$(Y,V,\mu)\subset (X,U,\mu)$ such that $\dcl(A)=L_p(Y,V,\mu)$ and
$\cU=L_p(X,U,\mu)$.  We have to prove that the density character of
the space of types of functions in the band orthogonal to $A$ is
$\omega$ and that the density character of the space of types in
the band generated by $A$ is $\omega$.

Let $g \in \cU$ be such that $g\in \dcl(A)^{\perp}$. The type
$\tp(g/A)$ is determined by $\|g^+\|$ and $\|g^-\|$. Let $B,C\in U$
be disjoint from $Y$ and from each other, each of measure one. The
set $\{\tp(c_1\chi_{B}-c_2\chi_{C}): c_1,c_2\in \mathbb{Q}^+\}$ is a
countable dense subset of the space of types of functions in the
band orthogonal to $\dcl(A)$.

We can identify $L_p(Y,V,\mu)$ with its canonical image in the space
 $L_p((Y,V,\mu)\otimes([0,1],\cB,m))$, where $([0,1],\cB,m)$ is the
standard Lebesgue space. Let $f \in \cU$ be an element in the band
generated by $\dcl(A)$. By Fact \ref{BR}, we can find $f'\in
L_p((Y,V,\mu) \otimes([0,1],\cB,m))$ such that $\tp(f/A)=\tp(f'/A)$.
To find the density character of the space of types in the band
generated by $A$ it suffices to find the density character of the
space of types over $\dcl(A)$ of elements in
$L_p((Y,V,\mu)\otimes([0,1],\cB,m))$. Since
$L_p((Y,V,\mu)\otimes([0,1],\cB,m))$ is separable, the density
character of the space of types over $\dcl(A)$ is also $\omega$.
\end{proof}

\begin{rmk}[$L_p$ spaces with atoms]
Let $(X,U,\mu)$ be a measure space with atoms such that
$L_p(X,U,\mu)$ is infinite dimensional.  We discuss briefly how
the preceding results in this section can be used to analyze types
and prove $\omega$-stability for $\Th_A(L_p(X,U,\mu))$.

Let $\cU$ be a $\kappa$-universal domain for $\Th_A(L_p(X,U,\mu))$.
By \cite[Example 13.4]{Henson-Iovino:Ultraproducts}, there exists a measure space $(Y,V,\nu)$
such that $\cU$ is isomorphic to $L_p(Y,V,\nu)$ as Banach lattices.
Using \cite[Theorem 2.2]{Henson:SeparableBanachSpaces} one can show that the number of atoms
in $V$ is the same as the number of atoms in $U$, if that number is
finite, and otherwise both $\sigma$-algebras have an infinite number
of atoms.  As discussed in Section 2, we may write $Y$ as the
disjoint union of two measurable sets, $Y = Y_0 \cup Y_1$, with
$Y_0$ being the union (up to null sets) of all the atoms of $V$ and
$Y_1$ being atomless. Since $\cU$ is at least $\omega_1$-saturated,
it is easy to show $\nu(Y_1)>0$.

For each $i = 0,1$, let $V_i,\nu_i$ denote the restrictions of
$V,\nu$ to $Y_i$, and let $\cU_i = L_p(Y_i,V_i,\nu_i)$.  Then we have
the $\ell_p$ direct sum decomposition $\cU \cong \cU_0 \oplus_p \cU_1$
as Banach lattices.  Furthermore, every Banach lattice automorphism
of $\cU$ leaves the sublattices $\cU_0$ and $\cU_1$ invariant; hence
the automorphisms of $\cU$ are exactly the maps $\sigma_0 \oplus
\sigma_1$ obtained as the direct sum of automorphisms $\sigma_0$ of
$\cU_0$ and $\sigma_1$ of $\cU_1$.  The atomic $L_p$ space $\cU_0$ is
isomorphic to the sequence space $\ell_p(S)$ for a suitable set S.
Its Banach lattice automorphisms arise from permutations of $S$.
Using \cite{Henson:SeparableBanachSpaces,Henson-Iovino:Ultraproducts}
as above, we may assume that $\cU_1$ is a
$\kappa$-universal domain for its theory.  Thus $\cU_1$ has a rich
group of Banach lattice automorphisms corresponding to the
equivalence relations defined by types, as discussed previously in
this section.

It is now easy to use automorphisms of $\cU$ to make estimates of the
sizes of type spaces, and thus verify that $\Th_A(L_p(X,U,\mu))$ is
$\omega$-stable.
\end{rmk}

\vfill

\section{Dividing}

Since the theory of $L_p$ Banach lattices is stable, we know it admits
a notion of independence defined by non-dividing.
Let us recall the definition:
\begin{dfn}
  Let $p(x,B)$ be a partial type over $B$ in a possibly infinite tuple
  of variables $x$ (so $p(x,y)$ is a partial type without parameters).
  Then $p(x,B)$ \emph{divides} over another set $C$ if there exists a
  $C$-indiscernible sequence $(B_i\colon i < \omega)$ in $\tp(B/C)$ such
  that $\bigcup_{i<\omega} p(x,B_i)$ is inconsistent.
  \\
  If $A,B,C$ are any sets in a universal domain $\cU$, such that
  $\tp(A/BC)$ does not divide over $C$, then we say that $A$ is
  \emph{independent} from $B$ over $C$, in symbols $A \ind_C B$.
\end{dfn}

This definition of non-dividing yields a natural notion of
independence in every stable theory, and more generally in every
simple one. The goal of this section is to give a more natural
characterization of non-dividing in the context of $L_p$ Banach
lattices. We will prove that it coincides with $*$-independence
(introduced in the next definition) by showing that this relation
has the standard properties of dividing independence.  (See
Proposition \ref{prp:*-indep-properties} below.)

\begin{dfn}
  \label{dfn:SInd}
  Let $A,B,C \leq \cU$ be sublattices of $\cU$ such that
  $C \leq A \cap B$.
  Let $\bE_B$ and $\bE_C$ be the conditional expectation projections
  to $B$ and $C$, respectively, as in
  \fref{ntn:AbstractCondExp}.
  We say that $A$ is \emph{$*$-independent} from $B$ over $C$, in
  symbols $A \ind[*]_C B$, if $\bE_B(f) = \bE_C(f)$ for all $f \in A$.
  \\
  If $A,B,C$ are any subsets of $\cU$, we say that $A \ind[*]_C B$ if
  $A' \ind[*]_{\bar C} B'$, where $A' = \dcl(AC)$ is the
  sublattice generated by $AC$, $\bar C = \dcl(C)$ and
  $B' = \dcl(BC)$.
\end{dfn}

First we have to point out that if we remove the requirement that
$C$ be contained in $A$ we get a weaker (and wrong) definition (see
\prettyref{exm:SIndACDisj}).
Therefore transitivity of $\ind[*]$ does not follow as obviously from
the definition as may seem at first sight.
However, we may replace the requirement that $C \leq A$ with the
following weaker one:

\begin{dfn}\label{dfn:intersect-well}
  Let $A,C \leq \cU$ be sublattices.
  We say that $A$ and $C$ \emph{intersect well} if
  $A^{\perp\perp} \cap C^{\perp\perp} = (A\cap C)^{\perp\perp}$.
  (Clearly $\supset$ always holds.)
\end{dfn}

\begin{rmk}\label{rmk:intersect-well}
It is easy to show (in the notation of the previous definition) that
$A$ and $C$ intersect well if and only if there exists a measure
space $(X,U,\mu)$ such that $\cU \cong L_p(X,U,\mu)$, with measure
subspaces $(Z,W,\mu)$ and $(Y,V,\mu)$ such that $Z \cap Y$ is in $W
\cap V$ and under this isomorphism $ A \cong L_p(Z,W,\mu)$ and $C
\cong L_p(Y,V,\mu)$.
\end{rmk}

\begin{lem}
  \label{lem:SIndBand}
  Let $A,B,C \leq \cU$ be sublattices such that
  $C \leq B$, $A$ and $C$ intersect well,
  and $\bE_C{\restriction}_A = \bE_B{\restriction}_A$.
  Then $A^{\perp\perp} \cap B^{\perp\perp} = (A\cap C)^{\perp\perp}$ (so in particular $A$ and
  $B$ intersect well).
\end{lem}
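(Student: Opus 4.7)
The plan is to prove the nontrivial inclusion $A^{\perp\perp} \cap B^{\perp\perp} \subseteq (A \cap C)^{\perp\perp}$; the reverse inclusion is immediate from $A \cap C \subseteq A$ and $A \cap C \subseteq C \leq B$, which give $(A \cap C)^{\perp\perp} \subseteq A^{\perp\perp}$ and $(A \cap C)^{\perp\perp} \subseteq C^{\perp\perp} \subseteq B^{\perp\perp}$. The strategy is first to establish the sharper statement $A^{\perp\perp} \cap B^{\perp\perp} \subseteq C^{\perp\perp}$: intersecting this with $A^{\perp\perp} \cap B^{\perp\perp} \subseteq A^{\perp\perp}$ and invoking the intersect-well hypothesis $A^{\perp\perp} \cap C^{\perp\perp} = (A \cap C)^{\perp\perp}$ then yields the conclusion and, as a by-product, that $A$ and $B$ also intersect well.

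By \fref{representation}, one can fix a representation $\cU = L_p(X,U,\mu)$ in which $B = L_p(Y_B,V_B,\mu)$, $C = L_p(Y_C,V_C,\mu)$, and $A = L_p(Z,W,\mu)$, with $Y_C \subseteq Y_B \subseteq X$ and $V_C \subseteq V_B \subseteq U$ (reflecting $C \leq B \leq \cU$) and $Z \in W \subseteq U$ (reflecting $A \leq \cU$); by the intersect-well assumption the representation can moreover be chosen so that $Z \cap Y_C \in W \cap V_C$. In this concrete setting the band projection onto $B^{\perp\perp}$ is multiplication by $\chi_{Y_B}$, that onto $C^{\perp\perp}$ is multiplication by $\chi_{Y_C}$, and $A^{\perp\perp} \cap B^{\perp\perp}$ is the space of $L_p$-functions supported on $Z \cap Y_B$. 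Therefore the sharpened inclusion amounts to the single measure-theoretic claim
\[\mu(Z \cap (Y_B \setminus Y_C)) = 0.\]

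To prove this, choose a decomposable partition $\{Z_i : i \in I\}$ of $Z$ into $W$-measurable sets of finite measure (\fref{decomposable}), so that $\chi_{Z_i} \in A$ for each $i$. Since $\bE_C(\chi_{Z_i}) \in C$ is supported on $Y_C$, the hypothesis $\bE_B(\chi_{Z_i}) = \bE_C(\chi_{Z_i})$ forces $\bE_B(\chi_{Z_i})$ to vanish on $Y_B \setminus Y_C$, which belongs to $V_B$ because $Y_C \in V_C \subseteq V_B$. The defining property of $\bE_B$ (using decomposability of $(Y_B,V_B,\mu)$ to handle the possibly infinite measure of $Y_B \setminus Y_C$) then yields
\[\mu(Z_i \cap (Y_B \setminus Y_C)) = \int_{Y_B \setminus Y_C} \chi_{Z_i}\, d\mu = \int_{Y_B \setminus Y_C} \bE_B(\chi_{Z_i})\, d\mu = 0,\]
and summation over $i$ via decomposability of $(X,U,\mu)$ produces the desired $\mu(Z \cap (Y_B \setminus Y_C)) = 0$. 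The main obstacle is arranging the representation compatibly: one must simultaneously reflect $C \leq B \leq \cU$ and $A,C$ intersecting well within a single choice of $(X,U,\mu)$, which can be carried out by building the underlying maximal disjoint families step by step as in the proof of \fref{representation}, starting from the witnesses provided by the intersect-well hypothesis.
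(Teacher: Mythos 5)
There is a genuine gap, and it sits exactly where you flagged it: the existence of a \emph{single} representation $\cU=L_p(X,U,\mu)$ in which $B=L_p(Y_B,V_B,\mu)$, $C=L_p(Y_C,V_C,\mu)$ and $A=L_p(Z,W,\mu)$ are \emph{all simultaneously} sub-measure-spaces does not follow from the hypotheses you invoke for it ($C\leq B$ plus ``$A$ and $C$ intersect well''). \fref{representation} handles a \emph{chain} of sublattices, and \fref{rmk:intersect-well} handles the pair $A,C$; neither gives you $A$ and $B$ in the same picture, and in general no such picture exists. Concretely, take $\cU=L_1([0,3],\cB,\mu)$, $C=\{0\}$, $B$ the constants on $[0,3]$, and $A$ the scalar multiples of $f=2\chi_{[0,1]}+\chi_{[2,3]}$ (the configuration of \fref{exm:SIndACDisj} with $C$ trivialized). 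Then $C\leq B$ and $A,C$ intersect well, but $A$ and $B$ admit no simultaneous representation as sub-measure-spaces: in any representation where both are one-dimensional $L_p$'s of measure subspaces their generators become characteristic functions $c\chi_{Z}$, $c'\chi_{Y}$, and one checks (e.g.\ by computing $\|(f-\tfrac32\chi_{[0,3]})^+\|$) that the lattice relations between $f$ and $\chi_{[0,3]}$ are not those of two characteristic functions. Of course here $\bE_B{\restriction}_A\neq\bE_C{\restriction}_A$ --- but that is precisely the point: the representability you need is only available \emph{because of} the conditional-expectation hypothesis, which your argument does not touch until after the representation has been fixed. Indeed, the simultaneous representability of $A,B,C$ is recorded in the paper as \fref{rmk:SIndPres}, stated as a \emph{consequence} of the conclusion $A^{\perp\perp}\cap B^{\perp\perp}=(A\cap C)^{\perp\perp}$; assuming it up front is essentially assuming what is to be proved.

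The second half of your argument is fine as far as it goes: granted the representation, the computation $\mu(Z_i\cap(Y_B\setminus Y_C))=\int_{Y_B\setminus Y_C}\bE_B(\chi_{Z_i})\,d\mu=0$ is correct and does yield $A^{\perp\perp}\cap B^{\perp\perp}\subseteq C^{\perp\perp}$, whence the lemma via the intersect-well identity. To repair the proof you must avoid representing $A$ and $B$ together. The paper does this purely band-theoretically: supposing a nonzero positive $h\in A^{\perp\perp}\cap B^{\perp\perp}\cap C^{\perp}$ exists, it approximates $h$ by $(ng)\wedge h$ with $g\in A$ positive, restricts $g$ to $(A\cap C)^{\perp}$ (which keeps $g\in A$ and, by the intersect-well hypothesis, puts $g\in C^{\perp}$), and then reads off $\bE_B(g)\geq\bE_B(g\wedge h)>0=\bE_C(g)$, contradicting $\bE_B{\restriction}_A=\bE_C{\restriction}_A$. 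Only $C\leq B$ ever needs to be realized concretely, so the representation obstacle never arises.
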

\begin{proof}
  Let $D = A\cap C$. The inclusion $\supset$ is immediate, so we
  prove $\subset$. Assume not, and let $f \in (A^{\perp\perp} \cap
  B^{\perp\perp}) \setminus D^{\perp\perp}$ be positive. Since $A$
  and $C$ intersect well and $f \in A^{\perp\perp}$, we necessarily
  have $f \notin C^{\perp\perp}$. Replacing $f$ with its restriction
  to $C^\perp$, we may assume that
  $$0 \neq f \in A^{\perp\perp} \cap B^{\perp\perp} \cap C^{\perp}.$$
  As $f \in A^{\perp\perp}$, there is $g \in A$ positive such that
  $\lim_{n\to\infty} (ng)\wedge f = f$.
  Then we also have $\lim_{n\to\infty} \bE_B((ng)\wedge f) = \bE_B(f) > 0$,
  whereby $\bE_B(g\wedge f) > 0$.
  Replacing $g$ with its restriction to the band $D^\perp$, we still have
  $g \in A$ (since $A \geq D$) and $g\wedge f$ is unchanged (since
  $f \in D^\perp$).
  As $A$ and $C$ intersect well: $g \in A\cap D^\perp \subset C^\perp$.

  We now have:
  \begin{gather*}
    \bE_B(g) \geq \bE_B(g\wedge f) > 0 = \bE_C(g).
  \end{gather*}
  This contradicts the assumption.
\end{proof}

\begin{rmk}
  \label{rmk:SIndPres} When $A,B,C \leq \cU$, $C \leq B$ and
  $A^{\perp\perp} \cap B^{\perp\perp} = (A\cap C)^{\perp\perp}$, we
  can represent $\cU$ as $L_p(X,U,\mu)$ such that the sublattices
  $A$, $B$, $C$ and $A\cap C$ are all the $L_p$ spaces of
  sub-measure spaces of $X$.
\end{rmk}

\begin{lem}
  \label{lem:SIndTrans}
  Let $A,B,C \leq \cU$ be sublattices such that
  $C \leq B$ and $A$ and $C$ intersect well.
  Then $A \ind[*]_C B$ if and only if
  $\bE_B{\restriction}_A = \bE_C{\restriction}_A$.
\end{lem}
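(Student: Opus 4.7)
The forward direction is immediate from Definition \ref{dfn:SInd}: when $A \ind[*]_C B$ and $C \leq B$ is already a sublattice, the definition unpacks to $A' \ind[*]_C B$ with $A' = \dcl(AC)$, $\bar C = C$, $B' = B$, which in the base case (now valid, since $C \leq A' \cap B$) says $\bE_B(f) = \bE_C(f)$ for every $f \in A'$; restricting to $A \subseteq A'$ yields the claim. So the plan focuses on the reverse direction.

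For the reverse direction, assume $\bE_B{\restriction}_A = \bE_C{\restriction}_A$. The goal is to extend this identity from $A$ to the sublattice $A' = \dcl(AC)$ generated by $A$ and $C$. First, Lemma \ref{lem:SIndBand} applies directly and gives $A^{\perp\perp} \cap B^{\perp\perp} = (A\cap C)^{\perp\perp}$, so Remark \ref{rmk:SIndPres} furnishes a concrete representation $\cU \cong L_p(X,U,\mu)$ in which $A$, $B$, $C$ (and $A\cap C$) are the $L_p$-spaces of sub-$\sigma$-algebras $V_A, V_B, V_C$ of $U$, with $V_C \subseteq V_B$ (since $C \leq B$). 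In this picture $A'$ corresponds to $L_p(\sigma(V_A \cup V_C))$, and the problem reduces to the purely measure-theoretic statement: if $\bE(f \mid V_B) = \bE(f \mid V_C)$ for every $f \in L_p(V_A)$, then the same identity holds for every $f \in L_p(\sigma(V_A \cup V_C))$.

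I would prove this by a standard $\pi$-system / monotone class argument. The family of intersections $E \cap G$ with $E \in V_A$ and $G \in V_C$ forms a $\pi$-system generating $\sigma(V_A \cup V_C)$, and for such $E,G$ of finite measure one computes directly
\[
\bE(\mathbf{1}_E \mathbf{1}_G \mid V_B) = \mathbf{1}_G\, \bE(\mathbf{1}_E \mid V_B) = \mathbf{1}_G\, \bE(\mathbf{1}_E \mid V_C) = \bE(\mathbf{1}_E \mathbf{1}_G \mid V_C),
\]
using that $\mathbf{1}_G$ is $V_B$-measurable (as $V_C \subseteq V_B$), the hypothesis applied to $\mathbf{1}_E \in L_p(V_A)$, and the pull-out property of conditional expectation. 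The collection of bounded $\sigma(V_A \cup V_C)$-measurable functions on which the two conditional expectations agree is a vector space closed under bounded monotone convergence, so the functional monotone class theorem extends the identity to every bounded $\sigma(V_A \cup V_C)$-measurable function; density of such functions together with continuity of both conditional expectation operators in $L_p$-norm then delivers the conclusion for every $f \in L_p(\sigma(V_A \cup V_C))$, as required. The only delicate point is the transition from the abstract Banach lattice formulation to the concrete measure-theoretic picture in which the monotone class argument can be run, but this step is handled cleanly by the preceding Lemma \ref{lem:SIndBand} together with Remark \ref{rmk:SIndPres}.
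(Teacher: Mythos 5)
Your proposal is correct and follows essentially the same route as the paper: reduce via Lemma \ref{lem:SIndBand} and Remark \ref{rmk:SIndPres} to a concrete representation, verify $\bE_B(\chi_{P\cap Q}) = \chi_Q\bE_B(\chi_P) = \chi_Q\bE_C(\chi_P) = \bE_C(\chi_{P\cap Q})$ by the pull-out property for $P$ in the $\sigma$-algebra of $A$ and $Q$ in that of $C$, and extend to the generated sublattice. The only difference is that you spell out the final extension step (the $\pi$-system/monotone class argument plus $L_p$-density) which the paper compresses into ``it follows that $\bE_B(\chi_R)=\bE_C(\chi_R)$ for all $R$ in the generated $\sigma$-algebra.''
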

\begin{proof}
  Let $A' = \dcl(AC)$ and $D = A\cap C$.
  The left to right is immediate since $A \subset A'$.
  We prove right to left.

  First, by \prettyref{lem:SIndBand} we have $A^{\perp\perp} \cap
  B^{\perp\perp} = D^{\perp\perp}$. It follows that there exists a
  measure space $(X,U,\mu)$ such that $\cU \cong L_p(X,U,\mu)$, and
  it has measure subspaces such that under this isomorphism $A \cong
  L_p(Z,W,\mu)$ and $C \cong L_p(Y,V,\mu)$.  See Remark
  \ref{rmk:intersect-well}. Then $A' \cong L_p(Y\cup Z,\langle V\cup
  W\rangle,\mu)$.

  Let $P \in W$ and $Q \in V$.
  Then from the various assumptions we made we obtain:
  \begin{align*}
    \bE_B(\chi_P) & = \bE_C(\chi_P)\\
    \bE_B(\chi_Q) & = \chi_Q = \bE_C(\chi_Q)\\
    \bE_B(\chi_{P\cap Q}) & = \chi_Q\bE_B(\chi_P) = \chi_Q\bE_C(\chi_P)
= \bE_C(\chi_{P\cap Q})
  \end{align*}
  It follows that $\bE_B(\chi_R) = \bE_C(\chi_R)$ for all $R \in
  \langle V\cup W\rangle$, whereby $\bE_B{\restriction}_{A'} =
  \bE_C{\restriction}_{A'}$, as required.
\end{proof}

\begin{cor}
  \label{cor:SIndTrans}
  Let $A,B,C,D \leq \cU$ such that $B \leq C \leq D$.
  Then $A \ind[*]_B D$ if and only if
  $A \ind[*]_B C$ and $A \ind[*]_C D$.
\end{cor}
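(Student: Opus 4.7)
The plan is to unfold both sides as equalities between conditional expectation operators and then combine the tower property with Lemmas~\ref{lem:SIndBand} and~\ref{lem:SIndTrans}. Write $A' = \dcl(AB)$ and $A'' = \dcl(AC)$; since $B \leq C$, we have $A' \leq A''$ and $\dcl(A' \cup C) = A''$. By \fref{dfn:SInd}, the three instances of $\ind[*]$ amount respectively to $\bE_D\rest_{A'} = \bE_B\rest_{A'}$, $\bE_C\rest_{A'} = \bE_B\rest_{A'}$, and $\bE_D\rest_{A''} = \bE_C\rest_{A''}$. The reverse direction is then immediate: restricting the last equality to $A' \leq A''$ and chaining with the middle one gives $\bE_D\rest_{A'} = \bE_C\rest_{A'} = \bE_B\rest_{A'}$.

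For the forward direction, assume $\bE_D\rest_{A'} = \bE_B\rest_{A'}$. I would first deduce $\bE_C\rest_{A'} = \bE_B\rest_{A'}$ via the tower property of conditional expectation. Applying Remark~\ref{representation} to the chain $C \leq D \leq \cU$ realises these sublattices as $L_p$-spaces over a nested chain of sub-$\sigma$-algebras of a common measure space; then \fref{fct:CondExp} together with the uniqueness in \fref{prp:CExpUnq} identifies $\bE_C$ and $\bE_D$ with the classical conditional expectations, and the classical tower property gives $\bE_C \circ \bE_D = \bE_C$. Hence for $f \in A'$, $\bE_C(f) = \bE_C(\bE_D(f)) = \bE_C(\bE_B(f)) = \bE_B(f)$, the last equality because $\bE_B(f) \in B \leq C$.

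The remaining task is to derive $\bE_D\rest_{A''} = \bE_C\rest_{A''}$; this is the main obstacle, since one cannot simply extend the equality $\bE_D = \bE_C$ from $A' \cup C$ to the generated sublattice $A''$, because conditional expectations do not commute with lattice operations. I would instead apply \fref{lem:SIndBand} with roles $A \mapsto A'$, $B \mapsto C$, $C \mapsto B$: the hypotheses hold ($B \leq C$; $A'$ and $B$ intersect well trivially since $B \leq A'$; and $\bE_B\rest_{A'} = \bE_C\rest_{A'}$ was just established), and the conclusion $(A')^{\perp\perp} \cap C^{\perp\perp} = (A' \cap B)^{\perp\perp} = B^{\perp\perp}$, combined with the obvious $B \leq A' \cap C$, forces $A'$ and $C$ to intersect well. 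Now \fref{lem:SIndTrans} applied with $A \mapsto A'$, $B \mapsto D$, $C \mapsto C$ (using $C \leq D$, the just-proved intersect-well condition, and $\bE_D\rest_{A'} = \bE_C\rest_{A'}$) yields $A' \ind[*]_C D$. Unfolding this via $\dcl(A' \cup C) = A''$ produces $\bE_D\rest_{A''} = \bE_C\rest_{A''}$, as required.
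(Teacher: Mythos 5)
Your proof is correct and follows essentially the same route as the paper's: the easy direction by chaining the two operator equalities, and the converse by using the tower property to get $\bE_C\rest_{\dcl(AB)}=\bE_B\rest_{\dcl(AB)}$, then \fref{lem:SIndBand} to show $\dcl(AB)$ and $C$ intersect well, and finally \fref{lem:SIndTrans} to conclude $A\ind[*]_C D$. You merely spell out the definitional unfoldings and the lemma instantiations that the paper leaves implicit.
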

\begin{proof}
  Replacing $A$ with $\dcl(AB)$, we may assume that $A \geq B$.

  If $A \ind[*]_B C$ and $A \ind[*]_C D$,
  then clearly $\bE_D{\restriction}_A = \bE_B{\restriction}_A$,
  whereby $A \ind[*]_B D$.

  Conversely, assume that $A \ind[*]_B D$.
  Then $\bE_D{\restriction}_A = \bE_B{\restriction}_A = \bE_C{\restriction}_A$.
  If follows by definition that $A \ind[*]_B C$.
  Also, by \prettyref{lem:SIndBand}, $A$ and $C$ intersect well,
  whereby $A \ind[*]_C D$ using \prettyref{lem:SIndTrans}.
\end{proof}

To prove symmetry of $\ind[*]$, we first point out that the
following is a special case of \prettyref{lem:SIndBand}:

\begin{cor}
  \label{cor:SIndBand}
  Let $A,B,C \leq \cU$ be sublattices, such that
  $C \leq A \cap B$, and $A \ind[*]_C B$.
  Then $A^{\perp\perp} \cap B^{\perp\perp} = C^{\perp\perp}$.
\end{cor}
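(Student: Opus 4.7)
The plan is to deduce this directly from \fref{lem:SIndBand} by verifying that its hypotheses are all met under the stronger assumption that $C \leq A \cap B$.

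First I would observe that $A$ and $C$ intersect well in the sense of \fref{dfn:intersect-well}: since $C \leq A$, we have $A \cap C = C$, and $C^{\perp\perp} \subset A^{\perp\perp}$, so the inclusion $A^{\perp\perp} \cap C^{\perp\perp} \supset (A\cap C)^{\perp\perp} = C^{\perp\perp}$ is an equality. Next, since $C \leq A$, unwinding \fref{dfn:SInd} gives $\dcl(AC) = A$ and $\dcl(C) = C$, so the hypothesis $A \ind[*]_C B$ means exactly $\bE_B(f) = \bE_C(f)$ for every $f \in A$, i.e., $\bE_C{\restriction}_A = \bE_B{\restriction}_A$. Together with $C \leq B$ (which is part of $C \leq A \cap B$), these are precisely the hypotheses of \fref{lem:SIndBand}.

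Applying \fref{lem:SIndBand}, we obtain $A^{\perp\perp} \cap B^{\perp\perp} = (A\cap C)^{\perp\perp} = C^{\perp\perp}$, which is the desired conclusion. There is no real obstacle here; this is a formal unwinding of definitions and a citation of the stronger lemma.
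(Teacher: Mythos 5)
Your proof is correct and is exactly the paper's route: the paper simply declares the corollary to be a special case of \fref{lem:SIndBand}, and your verification that $C \leq A$ forces $A$ and $C$ to intersect well, that $A \ind[*]_C B$ reduces to $\bE_B{\restriction}_A = \bE_C{\restriction}_A$, and that $(A \cap C)^{\perp\perp} = C^{\perp\perp}$ is precisely the unwinding left implicit there.
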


It is therefore harmless to assume, when proving symmetry,
that $A^{\perp\perp} \cap B^{\perp\perp} = C^{\perp\perp}$.

\begin{lem}
  \label{lem:SIndSym}
  Let $A,B,C \leq \cU$ be sublattices such that
  $C \leq A \cap B$ and $A^{\perp\perp} \cap B^{\perp\perp} = C^{\perp\perp}$.
  Using \prettyref{rmk:SIndPres},
  choose $(Y,V,\mu) \subset (Z_i,W_i,\mu) \subset (X,U,\mu)$ such that
  $\cU \cong L_p(X,U,\mu)$, $A \cong L_p(Z_0,W_0,\mu)$,
  $B \cong L_p(Z_1,W_1,\mu)$, $C \cong L_p(Y,V,\mu)$.
  Then the following are equivalent:
  \begin{enumerate}
  \item $A \ind[*]_C B$.
  \item For every $P_0 \in W_0$, $P_1 \in W_1$:
    \begin{gather*}
      \bE_{C}(P_0\cap P_1) = \bE_{C}(P_0)\bE_{C}(P_1)
    \end{gather*}
  \end{enumerate}
\end{lem}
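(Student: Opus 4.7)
By \fref{lem:SIndTrans} (whose hypothesis is met here, since $C \leq A$ forces $A^{\perp\perp} \cap C^{\perp\perp} = C^{\perp\perp} = (A\cap C)^{\perp\perp}$, i.e., $A$ and $C$ intersect well), condition (i) is equivalent to the operator equation $\bE_B\rest_A = \bE_C\rest_A$. So the lemma reduces to showing that this operator equation is equivalent to the product formula (ii). I would work throughout in the concrete representation provided by the statement and, via the decomposability assumption (Convention \ref{decomposable}), reduce the verification to $P_0, P_1$ of finite measure (and, when convenient, to the case $\mu(X) < \infty$).

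\textbf{(i) $\Rightarrow$ (ii).} Fix $P_0 \in W_0$ and $P_1 \in W_1$ of finite measure. Since $C \leq B$, the tower property $\bE_C = \bE_C \circ \bE_B$ and the pull-out rule $\bE_B(\chi_{P_1}h) = \chi_{P_1}\bE_B(h)$ (valid because $\chi_{P_1} \in B$) yield
\[
\bE_C(\chi_{P_0\cap P_1}) = \bE_C\bigl(\chi_{P_1}\bE_B(\chi_{P_0})\bigr).
\]
By hypothesis (i), $\bE_B(\chi_{P_0}) = \bE_C(\chi_{P_0})$, which lies in $C$; pulling it out of the outer $\bE_C$ gives $\bE_C(\chi_{P_0})\bE_C(\chi_{P_1})$, as required.

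\textbf{(ii) $\Rightarrow$ (i).} Both $\bE_B$ and $\bE_C$ are contractive linear maps on $\cU$, and the set of finite linear combinations of characteristic functions $\chi_{P_0}$ (for $P_0 \in W_0$ of finite measure) is norm-dense in $A$; so it suffices to show $\bE_B(\chi_{P_0}) = \bE_C(\chi_{P_0})$ for each such $P_0$. The candidate $\bE_C(\chi_{P_0})$ already lies in $C \leq B$, so by the uniqueness clause of \fref{fct:CondExp}, the identity reduces to verifying
\[
\int_{P_1}\bE_C(\chi_{P_0})\,d\mu = \mu(P_0\cap P_1)
\]
for every $P_1 \in W_1$ of finite measure. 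Using the pull-out rule $\bE_C(\chi_{P_1}\cdot \bE_C(\chi_{P_0})) = \bE_C(\chi_{P_0})\bE_C(\chi_{P_1})$ together with the integral-preservation property $\int h\,d\mu = \int \bE_C(h)\,d\mu$ for $h$ supported on a finite-measure $V$-set (applied after localizing to a suitable $Q \in V$ of finite measure), the left-hand side rewrites as $\int \bE_C(\chi_{P_0})\bE_C(\chi_{P_1})\,d\mu$. By (ii), this is $\int \bE_C(\chi_{P_0\cap P_1})\,d\mu = \mu(P_0\cap P_1)$, closing the argument.

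The main obstacle is the bookkeeping needed to invoke the tower rule, the pull-out rule, and integral-preservation in the $L_p$ (rather than $L_1$ or $L_\infty$) setting, and to handle sets of infinite measure. These are dissolved by decomposability: one localizes to $Q \in V$ of finite measure (and even, up to passing to direct summands, reduces to $\mu(X) < \infty$), after which all classical identities apply verbatim to the bounded characteristic functions involved.
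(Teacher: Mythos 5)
Your proof follows the same route as the paper's: both directions are the same chains of tower/pull-out identities, tested against the defining integral property of conditional expectation, and the forward direction (i)\,$\Rightarrow$\,(ii) is correct. (The appeal to \fref{lem:SIndTrans} is not even needed there: since $C \leq A$ we have $\dcl(AC)=A$, so (i) is by definition the identity $\bE_B\rest_A=\bE_C\rest_A$.)

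There is, however, a genuine gap in (ii)\,$\Rightarrow$\,(i), at the last link of your chain: the equality $\int \bE_C(\chi_{P_0\cap P_1})\,d\mu = \mu(P_0\cap P_1)$. Your justification (integral preservation for $h$ supported on a finite-measure $V$-set) presupposes that $P_0\cap P_1$ is contained in $Y$ up to a null set, which is not automatic; in general one only gets $\int \bE_C(\chi_{P_0\cap P_1})\,d\mu = \mu(P_0\cap P_1\cap Y)$, since $\bE_C$ kills the part of $\chi_{P_0\cap P_1}$ lying in $C^\perp$. The implication is in fact \emph{false} without the band hypothesis: take sublattices $A = B \cong L_p([0,2],\cB,m)$ and $C$ the elements of $A$ supported on and measurable with respect to $[0,1]$. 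Then $\bE_C(\chi_P) = \chi_{P\cap[0,1]}$, so condition (ii) holds for all $P_0,P_1$, yet $\bE_B(\chi_{[1,2]}) = \chi_{[1,2]} \neq 0 = \bE_C(\chi_{[1,2]})$, so $A \nind[*]_C B$. Since your argument for (ii)\,$\Rightarrow$\,(i) never invokes the hypothesis $A^{\perp\perp}\cap B^{\perp\perp} = C^{\perp\perp}$, it cannot be complete as written. The repair is one line, and is exactly the remark the paper inserts at this point: $\chi_{P_0\cap P_1} = \chi_{P_0}\wedge\chi_{P_1}$ lies in $A^{\perp\perp}\cap B^{\perp\perp} = C^{\perp\perp}$, hence $P_0\cap P_1 \subseteq Y$ up to a null set, and only then does integral preservation over $Y$ yield $\int\bE_C(\chi_{P_0\cap P_1})\,d\mu = \mu(P_0\cap P_1)$. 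With that observation added, your argument coincides with the paper's.
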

\begin{proof}
  Assume first that $A \ind[*]_C B$.
  Then for every pair of $P_i \in W_i$ and $Q \in V$:
  \begin{align*}
    \int_Q \chi_{P_0\cap P_1}\,d\mu &
    = \int_Q \bE_C(\bE_B(\chi_{P_0}\chi_{P_1}))\,d\mu
    = \int_Q \bE_C(\chi_{P_1}\bE_B(\chi_{P_0}))\,d\mu\\
    & = \int_Q \bE_C(\chi_{P_1}\bE_C(\chi_{P_0}))\,d\mu
    = \int_Q \bE_C(\chi_{P_0})\bE_C(\chi_{P_1})\,d\mu.
  \end{align*}
  Whereby $\bE_{C}(P_0\cap P_1) = \bE_{C}(P_0)\bE_{C}(P_1)$.

  Conversely, assume that $\bE_{C}(P\cap Q) = \bE_{C}(P)\bE_{C}(Q)$
  for every $P \in W_0$ and $Q \in W_1$.
  Then:
  \begin{align*}
    \int_Q \chi_P\,d\mu
    & = \int \chi_{P\cap Q}\,d\mu = \ldots
  \end{align*}
  As $A^{\perp\perp} \cap B^{\perp\perp} = C^{\perp\perp}$ we have
$P\cap Q \subset Y$, whereby:
  \begin{align*}
    \ldots
    & = \int \bE_C(\chi_{P\cap Q})\,d\mu
    = \int \bE_C(\chi_P)\bE_C(\chi_Q)\,d\mu\\
    & = \int \bE_C(\bE_C(\chi_P)\chi_Q)\,d\mu
    = \int \bE_C(\chi_P)\chi_Q\,d\mu\\
    & = \int_Q \bE_C(\chi_P)\,d\mu.
  \end{align*}
  As this holds for all $Q \in W_1$ we get
  $\bE_B(\chi_P) = \bE_C(\chi_P)$, and by standard arguments it follows that
  $\bE_B(f) = \bE_C(f)$ for all $f \in A$.
\end{proof}

\begin{prp}\label{prp:*-indep-properties}
  The relation $\ind[*]$ satisfies the following properties (here $A$,
  $B$, etc., are any small subsets of\ \ $\cU$):
  \begin{enumerate}
  \item Invariance under automorphisms of\ \ $\cU$.
  \item Symmetry: $A \ind[*]_C B \Longleftrightarrow B \ind[*]_C A$.
  \item Transitivity: $A \ind[*]_C BD$ if and only if
    $A \ind[*]_C B$ and $A \ind[*]_{BC} D$.
  \item Finite Character: $A \ind[*]_C B$ if and only
    $\bar a \ind[*]_C B$ for all finite tuples $\bar a \in A$.
  \item Extension: For all $A$, $B$ and $C$ we can find $A'$ such that
    $A \equiv_C A'$ and $A' \ind[*]_C B$.
  \item Local Character: If $\bar a$ is any finite tuple, then there
    is $B_0 \subset B$ at most countable such that
    $\bar a \ind[*]_{B_0} B$.
  \item Stationarity of types: If $A \equiv_C A'$, $A \ind[*]_C B$, and
    $A' \ind[*]_C B$ then $A \equiv_{BC} A'$.
  \end{enumerate}
\end{prp}
\begin{proof}
  \begin{enumerate}
  \item The definition of $\ind[*]$ makes this clear.
  \item Follows directly from
    \prettyref{cor:SIndBand} and \prettyref{lem:SIndSym}.
  \item This is just a rephrasing of \prettyref{cor:SIndTrans}.
  \item One direction is clear.
    Conversely, assume that $A \nind[*]_C B$, so there is
    $f \in \dcl(AC)$ such that $\bE_B(f) \neq \bE_C(f)$.
    This $f$ is simply the limit of terms in members of
    $A\cup C$, so a finite tuple $\bar a \in A$ (and all of $C$) would
    suffice to get some $f'$ which is close enough to $f$ so that
    $\bE_B(f') \neq \bE_C(f')$.
    Then $\bar a \nind[*]_C B$.
  \item
    We may assume that $A,B,C$ are sublattices of $\cU$ and $C \leq
    A\cap B$. By finite character, symmetry and transitivity, it
    suffices to prove the result when $A$ is finitely generated over
    $C$, say $A=\dcl(\{f_1,\dots f_n\}\cup C)$. Furthermore, we may
    assume that there is $l\leq n$ such that $f_i\in C^{\perp
    \perp}$ if $i\leq l$ and $f_i\in C^{\perp}$ if $i>l$. First let
    $g_{l+1},\dots,g_{n}\in B^{\perp}$ with
    $\tp(g_{l+1},\dots,g_{n})=\tp(f_{l+1},\dots,f_{n})$. (See
    \fref{lem:TypeInOutBand}.) By Corollary \ref{existence},
    \fref{prp:TypeIsDist} and Lemma \ref{lem:SIndSym}
    we can find elements $g_1,\dots,g_l\in C^{\perp \perp}$ such
    that $\tp(g_1,\dots,g_l/C)=\tp(f_1,\dots,f_l/C)$ and
    $\{g_1,\dots,g_l\}\ind[*]_{C}{B}$. Let $A'=\dcl(\{g_1,\dots,
    g_n\}\cup C)$. Then ${A'}\ind[*]_{C}{B}$ and $A \equiv_C A'$.
\item Let $\bar B$ be the sublattice generated by $B$.
    Let $C_0 \leq \bar B$ be the sublattice generated by
    $\{\bE_{\bar B}(f)\colon f \in \dcl(\bar a)\}$.
    Clearly $C_0$ is separable, whereby $A_0 = \dcl(\bar aC_0)$ is
    also separable.
    Also, $A_0$ and $B$ intersect well, so letting
    $C_1$ be the lattice generated by $\{\bE_{\bar B}(f)\colon f \in A_0\}$
    we get $\bar a \ind[*]_{C_1} B$.
    Then $C_1 \leq \bar B$ is also separable, so there is a countable
    subset $B_0 \subset B$ such that $C_1 \subset \dcl(B_0)$.
    By transitivity: $\bar a \ind[*]_{B_0} B$ as required.
  \item Again we may assume that all are lattices and $C \leq A\cap B$.
    Then the conditional distribution of members of $A$ over $C$,
    along with the fact that they have the same conditional
    expectation over $C$ and over $B$, determines their conditional
    distribution over $B$.
    \qedhere
  \end{enumerate}
\end{proof}

It follows by \cite[Theorems~1.51,2.8]{BenYaacov:SimplicityInCats}:

\begin{thm}
  The theory of $L_p$ Banach lattices is stable, and non-dividing
  coincides with $*$-independence (i.e.,
  $A \ind_C B \Longleftrightarrow A \ind[*]_C B$).
\end{thm}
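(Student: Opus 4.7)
The plan is to appeal directly to the abstract characterization of stability and non-dividing for compact abstract theories established in \cite{BenYaacov:SimplicityInCats}. Roughly, the principle there says that if a ternary relation on small subsets of a universal domain satisfies invariance, symmetry, transitivity, finite character, extension, local character, and stationarity over models (or more precisely over algebraically/definably closed sets), then the ambient theory is stable and the relation must coincide with non-dividing independence.

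Accordingly, the first step is to check that \fref{prp:*-indep-properties} supplies exactly this list of axioms for $\ind[*]$ in the cat-theoretic framework applicable to $L_p$ Banach lattices as normed space structures (recall from Section 3 that these structures fit into Iovino's approximate positive bounded setting, which is a special case of the cat formalism). Invariance, symmetry, transitivity, finite character, extension, and local character are listed verbatim, and the stationarity clause recorded in \fref{prp:*-indep-properties} plays the role of stationarity over $\dcl$-closed bases, which is the version needed for \cite[Theorems~1.51, 2.8]{BenYaacov:SimplicityInCats}.

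The second step is to invoke those theorems. \cite[Theorem~1.51]{BenYaacov:SimplicityInCats} gives that the existence of an independence relation with the listed properties forces the theory to be simple with non-dividing equal to the given relation; \cite[Theorem~2.8]{BenYaacov:SimplicityInCats} upgrades this to stability when stationarity over (algebraically closed, or here definably closed) bases holds, which is precisely our case. Combining these two yields both assertions of the theorem simultaneously: the theory of atomless $L_p$ Banach lattices is stable and $A \ind_C B \Longleftrightarrow A \ind[*]_C B$.

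There should be no real obstacle, since the work has been done in \fref{prp:*-indep-properties}; the only mild point to confirm is that the formulation of the axioms in \cite{BenYaacov:SimplicityInCats} matches ours (in particular, that stationarity is stated over $\dcl$-closed sets, which is harmless because replacing $C$ by $\dcl(C)$ preserves all the hypotheses and conclusions of $\ind[*]$). Once this bookkeeping is done, the theorem follows by citation.
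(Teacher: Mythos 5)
Your proposal is correct and is exactly the paper's argument: the paper proves the theorem by the single line ``It follows by [BenYaacov, Theorems~1.51, 2.8]'' immediately after establishing the list of axioms for $\ind[*]$ in \fref{prp:*-indep-properties}. The bookkeeping point you flag (stationarity over $\dcl$-closed bases) is indeed the only thing to check, and it is harmless for the reason you give.
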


The following is a nice feature of independence in $L_p$ lattices:
\begin{prp}
  \label{prp:IndSingle}
  Let $A,B,C \subset \cU$ be any sets.
  Then $A \ind_C B$ if and only if $f \ind_C g$ for all $f \in \dcl(A)$
  and $g \in \dcl(B)$.
  In fact, it suffices to assume that $f \ind_C g$ for every $f,g$
  which are obtained as terms in members of $A$ and $B$,
  respectively.
\end{prp}
\begin{proof}
  Left to right is clear, so we prove right to left.
  By the finite character of independence we may assume that $A$ is a finite set, and
  enumerate it in a tuple $\bar f$.
  Using symmetry, it would suffice to assume that $h \ind_C B$ for all
  $h \in \dcl(A)$,  and we might as well assume that $B \supseteq C$.
  In particular we have
  $t(\bar f) \ind_C B$ for every term $t$ in the right number
  of variables.

  By \fref{prp:*-indep-properties} (extension) we can find a tuple
  $\bar g$ such that $\bar g \equiv_C \bar f$ and such that in addition
  $\bar g \ind_C B$.
  Thus for every term $t$ we also have
  $t(\bar g) \ind_C B$ and $t(\bar f) \equiv_C t(\bar g)$.
  By stationarity we have $t(\bar f) \equiv_C t(\bar g)$ for every
  term $t$, and by \fref{lem:TupleType}:
  $\bar f \equiv_B \bar g$.
  Thus by invariance we obtain $\bar f \ind_C B$, i.e.,
  $A \ind_C B$ as required.
\end{proof}

The following example show that the requirement that $C \leq A$ in the
definition of $\ind[*]$ cannot be entirely done away with:

\begin{exm}
  \label{exm:SIndACDisj}
  Let us work in $L_p([0,3],\cB,\mu)$, where $\mu$ is the Lebesgue measure.
  Let $C$ consist of all constant functions, $B$ consist of all
  functions which are constant of $[0,2]$ and $(2,3]$, and let $A$
  consist of all scalar multiples of $f = 2\chi_{[0,1]} + \chi_{[2,3]}$.

  The $A,B,C$ are sublattices of the ambient lattice, $C \leq B$, and
  for all members $\alpha f$ of  $A$ (where $\alpha$ is a scalar):
  $$\bE_B(\alpha f) = \bE_C(\alpha f) = \alpha\chi_{[0,3]}.$$
  Nevertheless, we have $A \nind[*]_C B$, since
  $\chi_{[2,3]} \in \dcl(AC)$, and
  $$\bE_B(\chi_{[2,3]}) = \chi_{[2,3]} \neq \frac{1}{3}\chi_{[0,3]} =
  \bE_C(\chi_{[2,3]}).$$
\end{exm}

An interesting feature of Hilbert space and many of its expansions
(see \cite{Berenstein-Buechler:ExpansionsOfHilbertSpaces}) is that non-dividing is ``trivial'' in the following
sense: two sets $A$ and $B$ are independent over $C$ if and only if
for every $a\in A$ and $b\in B$, $a$ is independent from $b$ over
$C$. The Banach lattice $\cU$ is \emph{not} ``trivial'' in that
sense, as it is shown by the following well known example from
probability. (See exercise 9.1 in \cite{Folland:RealAnalysis}.)

\begin{exm}
We work inside the standard Lebesgue space $L_p([0,1],\cB,\mu)$. Let
$C=\chi_{[0,1]}$. Let
$a_1=\chi_{[0,1/4]\cup[1/2,3/4]}$, $a_2=\chi_{[0,1/4]\cup[3/4,1]}$,
$a_3=\chi_{[0,1/2]}$. Then $a_j \ind_{C} a_3$ for $j=1,2$ but
$a_1,a_2 \nind_{C} a_3$.
\end{exm}

\begin{rmk}\label{rmk:p-independent}
  As a closing remark in this section, we note that for bounded
  functions over sets of finite measure, dividing independence in
  $L_p$-spaces does not depend at all on $p$.  Specifically, if
  $(X,U,\mu)$ is a measure space with $\mu(X) < \infty$ and $A,B,C
  \subset L_\infty(X,U,\mu)$, then for any $1 \leq p,q < \infty$,
  $A \ind_{C} B$ holds in $L_p(X,U,\mu)$ if and only if it holds in
  $L_q(X,U,\mu)$.
\end{rmk}

\section{Conditional slices}
\label{sec:CondSlice}

In this section we would like to study types and independence a little
further.
First, we would like to give a concrete characterization of types over
a set $C$.
For this purpose we may always assume that $C = \dcl(C)$, i.e., that
$C$ is a Banach sublattice of the ambient model.
We have in fact already given such a characterization of types as
conditional distributions in \fref{prp:TypeIsDist}.
However this characterization depends on a particular presentation of
$C$ as an $L_p$ space and is not intrinsic to the type.

We find our characterization of independence using conditional
expectations similarly deficient as it depends on a good
intersection.
Indeed \fref{exm:SIndACDisj} shows that for lattices $C \leq B$,
comparing conditional expectations over $C$ and over $B$ does not
necessarily suffice to decide whether $A \ind_C B$.
We should therefore like to have
a finer tool that can give an exact measure
of the dependencies of $A$ with $B$ (and with $C$).

We solve both issues using the notion of \emph{conditional slices}.
More precisely, the conditional slices of a single function $f$
over a lattice $C$ yield an
intrinsic characterization of the type $\tp(f/C)$.
We will show that for $C \leq B$, the conditional slices of $f$ over $B$
and $C$ agree if
and only if $f \ind_C B$.
By \fref{prp:IndSingle} this suffices to characterize when
$A \ind_C B$ where $A$ is an arbitrary lattice (i.e., not necessarily
intersecting $C$ well).

If $A$ is a Banach lattice then $A^+$ denotes its positive cone
$A^+ = \{f \in A\colon f \geq 0\}$.

Throughout, $C \leq \cU$ will denote a Banach
sublattice of the ambient model.
We may sometimes wish to fix a presentation
of $C \leq \cU$ as the $L_p$ spaces of
$(Y,V,\mu) \subset (X,U,\mu)$.
We start with a simple observation:
\begin{lem}
  \label{lem:RSlicePresIndep}
  Let $f \in \cU$, $r \in [0,1]$.
  Fixing a presentation of $C$ as above, let
  $R = \{x\colon f(x) \leq 0\}$.
  Then the property $\bP_C(f\leq0) = \bP_C(R) \geq r\chi_Y$ does not depend on
  the chosen presentation.
  We will therefore simply write it as
  ``$\bP_C(f \leq 0) \geq r$''.
\end{lem}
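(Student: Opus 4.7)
The plan is to reformulate the condition ``$\bP_C(f\leq 0)\geq r\chi_Y$'' intrinsically in terms of $f$, the sublattice $C\leq\cU$, and the real number $r$, without reference to the measure-theoretic presentation. Since \fref{prp:CExpUnq} shows that the conditional expectation operator $\bE_C\colon\cU\to C$ is uniquely determined by the sublattice $C$ alone, such an intrinsic reformulation automatically yields presentation-independence.

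For each positive $g\in C^+$, I would construct the ``slice'' $g\cdot\chi_R$ intrinsically as a norm limit: set
$$h_g := g - \lim_{n\to\infty}(g\wedge nf^+).$$
The sequence $(g\wedge nf^+)_n$ is increasing and bounded above by $g$, hence converges in $L_p$-norm in $\cU$ (a purely Banach-lattice fact), so $h_g\in\cU$ is well defined without reference to any presentation. In any given presentation with $\cU=L_p(X,U,\mu)$ and $R=\{f\leq 0\}$, we have pointwise $(g\wedge nf^+)(x)\uparrow g(x)\chi_{\{f>0\}}(x)$, so $h_g$ agrees a.e.\ with the pointwise product $g\cdot\chi_R$.

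I would then show that the stated condition ``$\bP_C(R)\geq r\chi_Y$'' is equivalent to the intrinsic condition
$$\bE_C(h_g)\geq r g \quad \text{for every } g\in C^+.$$
In a given presentation, $g\in C$ is $V$-measurable, so $\bE_C$ pulls $g$ out: $\bE_C(g\chi_R)=g\cdot\bE_C(\chi_R)=g\cdot\bP_C(R)$. The inequality $g\bP_C(R)\geq rg$ is equivalent to $\bP_C(R)\geq r$ on the support of $g$, and letting $g$ range over $\chi_A$ for all finite-measure $A\in V$ recovers exactly ``$\bP_C(R)\geq r\chi_Y$'' (using decomposability of $(Y,V,\mu)$ to assemble the local conditions into a global one). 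Since both $h_g$ and $\bE_C$ are intrinsically defined, the reformulated condition, and hence the original one, depends only on $f$, $C$, and $r$.

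The main technical point I expect to need care with is the pull-out equation $\bE_C(g\chi_R)=g\cdot\bE_C(\chi_R)$ when $\chi_R$ itself need not belong to $L_p$ (since $\mu(R)$ may be infinite). This is handled by restricting to $g\in C^+\cap L_p$ and invoking the decomposability assumption from \fref{decomposable}, which reduces the question to finite-measure pieces where the standard $L_p$ pull-out property applies.
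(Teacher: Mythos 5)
Your proof is correct, but it takes a genuinely different route from the paper's. The paper's proof is a one-liner: it rewrites the (complementary) condition $\bP_C(f>0)\leq(1-r)\chi_Y$ as the family of purely norm-theoretic conditions $\|(nf)^+\wedge g\|\leq\sqrt[p]{1-r}\,\|g\|$ for all $g\in C^+$ and $n<\omega$ --- the same increasing sequence $nf^+\wedge g$ you use, but with the supremum of norms taken directly rather than passing to the limit function, subtracting from $g$, and applying $\bE_C$. The substantive difference is that the paper's reformulation invokes only the norm and the lattice operations, which are manifestly part of the abstract structure, whereas yours routes through the conditional expectation operator and must cite \fref{prp:CExpUnq} to know that $\bE_C$ is presentation-independent. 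That is legitimate, since \fref{prp:CExpUnq} is proved earlier; but it has a cost you should be aware of: \fref{sec:CondSlice} is explicitly advertised (see the paragraph before \fref{prp:CExpUnq} and \fref{rmk:SecondCondExpUniqueness}) as yielding a \emph{second, independent} proof of the presentation-invariance of $\bE_C$ via the formula $\bE_C(f)=\int_0^1\bS_r(f/C)\,dr$. Since \fref{lem:RSlicePresIndep} is the foundation stone of the conditional-slice machinery, grounding it on \fref{prp:CExpUnq} would make that later ``alternative proof'' circular. Your argument also carries some extra bookkeeping (the pull-out identity $\bE_C(g\chi_R)=g\,\bP_C(R)$ when $\mu(R)$ may be infinite, which you correctly flag and which does go through by decomposability) that the norm formulation avoids entirely. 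So: a valid proof of the lemma as an isolated statement, but the paper's leaner argument is preferable in context.
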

\begin{proof}
  The equivalent property
  $\bP_C(f>0) \leq (1-r)\chi_Y$ holds if and only if for all
  $g \in C^+$ and all $n < \omega$:
  $\|(nf)^+ \wedge g\| \leq \sqrt[p]{1-r}\|g\|$.
\end{proof}

We may therefore conveniently work with any fixed presentation of $C$
as an $L_p$ space, while at the same time keeping our constructions
independent of this presentation.
For
$f \in \cU^+$ and $r \in (0,1)$ we may define, independently of the
presentation of $C$:
\begin{gather*}
  S_r(f) = \{g \in C^+\colon \bP_C(g \leq f) \geq r' \text{ for some } r'> r\}.
\end{gather*}
Assume $g \in S_r(f)$, and let $A = \{x \in X\colon g(x) \leq f(x)\}$.
Then $\chi_Ag \leq f$ whereby
$\int \chi_Ag^p\, d\mu \leq \|f\|^p$.
Since $g \in C$ we also have
$\int \chi_Ag^p\, d\mu = \int \bP[A|C]g^p\, d\mu \geq r'\|g\|^p \geq r\|g\|^p$.
Thus $g \in S_r(f)$ implies $\sqrt[p]{r}\|g\| \leq \|f\|$, whereby
$\|g\| \leq \frac{1}{\sqrt[p]{r}}\|f\|$ for all $g \in S_r(f)$, and thus for
all $g \in \overline{S_r(f)}$.
If $g_1,g_2 \in S_r(f)$ then considering separately the sets on which
$g_1 \geq g_2$ and on which $g_1 < g_2$ we see that $g_1 \vee g_2 \in S_r(f)$.
Since the lattice operations are continuous it follows that
$g_1,g_2 \in \overline{S_r(f)} \Longrightarrow g_1\vee g_2 \in \overline{S_r(f)}$.
Now let $(g_n\colon n < \omega) \subseteq \overline{S_r(f)}$ be an increasing sequence
and let $g$ be its pointwise limit.
By Monotone Convergence we have
$\|g\| \leq \frac{1}{\sqrt[p]{r}}\|f\|$, so $g \in C^+$, and by Dominated
Convergence $g_n \to g$ in $L_p$ and $g \in \overline{S_r(f)}$.
In any $L_p$ space, a strictly increasing sequence of positive
functions is strictly increasing in norm and therefore at most
countable.
Putting everything together we conclude that
$\overline{S_r(f)}$ must admit a greatest element.

\begin{dfn}
  \label{dfn:CondSlice}
  Let $C \leq \cU$ be a Banach sublattice, $f \in \cU$,
  $r \in (0,1)$.
  If $f \geq 0$ we define its \emph{conditional $r$-slice over $C$},
  denoted $\bS_r(f/C)$, as the maximal element of
  $\overline{S_r(f)}$.
  In other words, $\bS_r(f/C) \in C^+$ and is the supremum of all
  $g \in C^+$ verifying $\bP_C\big( f \geq g \big) \geq r'>r$.

  For arbitrary $f$ we define
  $\bS_r(f/C) = \bS_r(f^+/C) - \bS_{1-r}(f^-/C)$.
\end{dfn}

If $g_1 \in S_r(f^+)$ and $g_2 \in S_{1-r}(f^-)$ then
by definition there are $r'' < r < r'$ such that:
\begin{gather*}
  \bP_C\big( g_1 \leq f^+ \big) \geq r', \qquad
  \bP_C\big( g_2 \leq f^- \big) \geq 1-r''.
\end{gather*}
Notice that $r' + (1-r'') > 1$, so if we had in addition
$g_1 \wedge g_2 > 0$ we would
obtain $f^+\wedge f^- > 0$ which is impossible.
We conclude that for all $g_1 \in S_r(f^+)$ and $g_2 \in S_{1-r}(f^-)$:
$g_1 \wedge g_2 = 0$.
It follows by continuity that
$\bS_r(f^+/C) \wedge \bS_{1-r}(f^-/C) = 0$, i.e.:
\begin{gather*}
  \bS_r(f^+/C) = \bS_r(f/C)^+, \qquad \bS_{1-r}(f^-/C) = \bS_r(f/C)^-.
\end{gather*}

Observe also that for $f \geq 0$ we have
$S_r(f) = \bigcup_{r'>r} S_{r'}(f)$, and this is an increasing union,
so
\begin{gather*}
  \bS_r(f) = \bigvee_{r'>r} \bS_{r'}(f).
\end{gather*}
In particular $\bS_r(f/C)$ decreases as $r$ increases (for $f\geq0$ and thus for
arbitrary $f$).

Finally observe that $\bS_r(f/C)$ only depends on $f{\restriction}_{C^{\perp\perp}}$.
Moreover, it is unchanged by automorphisms of $\cU$ which fix $C$, so
it only depends on $\tp(f/C)$.
Thus, if $p = \tp(f/C)$ we may
define $\bS_r(p) = \bS_r(f/C)$.

\begin{lem}
  \label{lem:CondSliceDist}
  Let $f \geq 0$, $r \in (0,1)$, $t \geq 0$, and
  fix a presentation $C = L_p(Y,V,\mu)$.
  Then the following subsets of $Y$ are equal up to a null measure
  set:
  $$\big\{ x \in Y\colon \bP_C(f \geq t)(x) \geq r \big\}
  \qquad =_{a.e.} \qquad
  \bigcap_{r' \in (0,r)\cap\bQ} \big\{ x \in Y\colon \bS_{r'}(f/C)(x) \geq t \big\}.$$
\end{lem}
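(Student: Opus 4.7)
The plan is to prove the two inclusions separately. The forward inclusion $\subseteq$ comes from plugging an explicit test function into the defining supremum for $\bS_{r'}(f/C)$. The reverse inclusion $\supseteq$ rests on the auxiliary identity
\[ \bP_C\big(f \geq \bS_{r'}(f/C)\big) \geq r' \chi_Y \quad \text{a.e., for each } r' \in (0,1). \]

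For the forward inclusion, fix a rational $r' \in (0,r)$, let $A = \{y \in Y \colon \bP_C(f \geq t)(y) \geq r\}$, which lies in $V$, and consider $g = t\chi_A \in C^+$ (this is well-defined in $C$ since $\bP_C(f \geq t) \in L_p(Y,V,\mu)$ forces $\mu(A) < \infty$ when $t > 0$; the case $t = 0$ is trivial). Using $\bE_C(\chi_D h) = \chi_D \bE_C(h)$ for $D \in V$, one directly computes $\bP_C(g \leq f) = \chi_{A^c} + \chi_A \bP_C(f \geq t) \geq r\chi_Y$. Choosing any $r'' \in (r',r)$ shows $g \in S_{r'}(f)$, so $g \leq \bS_{r'}(f/C)$, and therefore $\bS_{r'}(f/C) \geq t$ on $A$. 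Intersecting over all rational $r' < r$ gives $A$ inside the RHS.

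For the auxiliary identity, recall that $\bS_{r'}(f/C) = \bigvee \overline{S_{r'}(f)}$ and that $S_{r'}(f)$ is closed under binary $\vee$ (discussion preceding \fref{dfn:CondSlice}). Pick $h_n \in S_{r'}(f)$ with $h_n \to \bS_{r'}(f/C)$ in norm and replace it by $g_n = h_1 \vee \cdots \vee h_n \in S_{r'}(f)$; then $g_n \uparrow \bS_{r'}(f/C)$ both in norm and, by monotone/dominated convergence, pointwise a.e. Each $g_n$ carries a witness $r''_n > r'$ with $\bP_C(g_n \leq f) \geq r''_n \chi_Y$. Since $g_n$ is increasing, the sets $E_n = \{g_n \leq f\}$ decrease; reducing to finite-measure pieces using decomposability of $(Y,V,\mu)$ and applying the conditional monotone convergence theorem yields $\bP_C\big(\bigcap_n E_n\big) = \inf_n \bP_C(E_n) \geq r' \chi_Y$. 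On the other hand, $\bigcap_n E_n \subset \{f \geq \bS_{r'}(f/C)\}$ a.e., because on this intersection $f \geq g_n$ for every $n$, hence $f \geq \sup_n g_n = \bS_{r'}(f/C)$.

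To conclude the reverse inclusion, observe that on the $V$-measurable set $\{\bS_{r'}(f/C) \geq t\}$ the event $\{f \geq \bS_{r'}(f/C)\}$ is fibrewise contained in $\{f \geq t\}$, so by the auxiliary identity $\bP_C(f \geq t) \geq r'$ there. Any $x$ in the RHS therefore satisfies $\bP_C(f \geq t)(x) \geq r'$ for every rational $r' \in (0,r)$, whence $\bP_C(f \geq t)(x) \geq r$ and $x$ lies in the LHS. The most delicate point is the passage from the strict witnesses $r''_n > r'$ to the non-strict limit $\bP_C(\bigcap_n E_n) \geq r'$: the strictness is necessarily lost in the limit, but the bound $r'$ is exactly what is needed to close the argument.
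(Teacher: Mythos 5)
Your proof is correct and follows essentially the same route as the paper's: the forward inclusion by testing $t\chi_A \in S_{r'}(f)$ against the supremum defining $\bS_{r'}(f/C)$, and the reverse inclusion keyed on the inequality $\bP_C\big(f \geq \bS_{r'}(f/C)\big) \geq r'$. The only difference is that the paper dismisses that inequality with ``by construction'' while you prove it in full (increasing sequence in $S_{r'}(f)$ plus conditional monotone convergence), which is a worthwhile addition precisely because, as you note, the strict witnesses $r''_n > r'$ degenerate to the non-strict bound $r'$ in the limit.
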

\begin{proof}
  (All equalities and inequalities here are up to a null measure set.)
  Let $A$ and $B$ denote the sets on the left and right hand side,
  respectively.
  Let $B_{r'}$ denote the set inside the intersection, so
  $B = \bigcap_{r' \in (0,r)\cap\bQ} B_{r'}$.
  If $r' < r$ then $t\chi_A \in S_{r'}(f)$ whereby
  $\bS_{r'}(f/C) \geq t\chi_A$.
  Therefore $A \subseteq B_{r'}$ for all $r' < r$, so $A \subseteq B$.
  On the other hand observe that by construction
  $\bP_C\big( f \geq \bS_{r'}(f/C) \big) \geq r'$.
  Therefore $\bP_C\big( f \geq t\chi_B \big) \geq r'$
  for all (rational) $r' < r$, so
  $\bP_C\big( f \geq t\chi_B \big) \geq r$.
  Since $B \in V$, this is the same as saying that
  $\bP_C\big( f \geq t \big) \geq r$ for (almost) all $x \in B$, whence
  $B \subseteq A$.
\end{proof}

\begin{prp}
  \label{prp:CondSliceType}
  For $f,g \in C^{\perp\perp}$:
  $f \equiv_C g$ if and only if
  $\bS_r(f/C) = \bS_r(g/C)$ for all $r \in (0,1)$.

  More generally, for arbitrary $f,g \in \cU$ we have
  $f \equiv_C g$ if and only if
  $\bS_r(f/C) = \bS_r(g/C)$ for all $r \in (0,1)$
  and $\|(f{\restriction}_{C^\perp})^+\| = \|(g{\restriction}_{C^\perp})^+\|$,
  $\|(f{\restriction}_{C^\perp})^-\| = \|(g{\restriction}_{C^\perp})^-\|$
\end{prp}
\begin{proof}
  For the first assertion,
  left to right has already been observed above.
  For right to left, let us fix a presentation
  $C = L_p(Y,V,\mu)$,
  and consider first the case where $f,g \geq 0$.
  By \fref{lem:CondSliceDist} we have
  $\bP_C(f \geq t) = \bP_C(g \geq t)$ for all $t \geq 0$,
  so the conditional distributions of $f$ and $g$ over $V$ are equal:
  $\dist(f|V) = \dist(g|V)$.
  In the general case we have
  $\bS_r(f^+|C) = \bS_r(f|C)^+ = \bS_r(g|C)^+ = \bS_r(g^+|C)$
  and $\bS_r(f^-|C) = \bS_{1-r}(f|C)^- = \bS_{1-r}(g|C)^- = \bS_r(g^-|C)$
  for all $r \in (0,1)$.
  Again by \fref{lem:CondSliceDist}, $\dist(f^+|V) = \dist(g^+|V)$ and
  $\dist(f^-|V) = \dist(g^-|V)$, whereby
  $\dist(f|V) = \dist(g|V)$.
  We conclude that $f \equiv_C g$ using
  \fref{prp:TypeIsDist}.

  The second assertion follows.
\end{proof}

Thus conditional slices provide a system of invariants for
classifying $1$-types over $C$.
Unlike conditional distributions they do not depend on any extraneous
information such as a presentation of $C$ as a concrete $L_p$ space.
We will now see that various properties of types, of which the most
important are distance and independence, can be read off
directly from the conditional slices.

For this purpose we will first construct, for each system for
conditional slices, a canonical realization of the corresponding type
in $C^{\perp\perp}$.
Let $D = C \otimes L_p([0,1],\cB,\lambda)$, where $([0,1],\cB,\lambda)$ is
the standard Lebesgue space.
Given a presentation $C = L_p(Y,V,\mu)$ we can present
$D = L_p(Y \times [0,1],V \otimes \cB,\mu \times \lambda)$.
For $f \in C$ and $g \in L_p([0,1],\cB,\mu)$ the tensor
$f \otimes g \in D$ is just the function
$h(x,y) = f(x)g(y)$.
Alternatively, we can view $D$ as an abstract
$L_p$ lattice in which $C$ embeds via $f \mapsto f \otimes \chi_{[0,1]}$.
We may embed $D$ in $\cU$ over $C$, and we will choose (arbitrarily)
such an embedding.
Notice that then $D \leq C^{\perp\perp}$.

For $f \in \cU$ we define $\bS(f/C) \in D$ by
$\bS(f/C)(x,y) = \bS_y(f/C)(x)$.
As usual, this does not depend on the presentation of $C$ (although it
does of course depend on the particular presentation we chose for
$L_p([0,1])$).
Indeed we have:
\begin{align*}
  \bS(f/C)^+ & = \bS(f^+/C) \\
  & = \bigvee \{\bS_r(f^+/C) \otimes \chi_{[0,r]}\colon r \in \bQ\cap(0,1)\}, \\
  \bS(f/C)^- & = -\bS(-f^-/C) = (\id \otimes \tau)\big(\bS(f^-/C)\big) \\
  & = \bigvee \{\bS_{1-r}(f^-/C) \otimes \chi_{[r,1]}\colon r \in \bQ\cap(0,1)\}.
\end{align*}
Here $\tau \in \Aut(L_p([0,1]))$ consists of reversing the order on $[0,1]$:
$(\tau h)(y) = h(1-y)$.
As before, $\bS(f/C)$ depends only on $\tp(f/C)$, so we may write
it instead as $\bS(p)$ where $p = \tp(f/C)$.

Let $D_{dec} \subseteq D$ be the set of $h(x,y) \in D$ which are decreasing in
$y$.
\begin{lem}
  \label{lem:MergedCondSlice}
  For all $f \in \cU$ we have $\bS(f/C) \in D_{dec}$.
  If in addition $f \in C^{\perp\perp}$ then $\bS(f/C) \equiv_C f$.
  Finally, if $f \in C^{\perp\perp} \cap D_{dec}$ then $\bS(f/C) = f$.
\end{lem}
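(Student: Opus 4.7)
The plan is to fix throughout a presentation $C = L_p(Y,V,\mu)$ and its extension $D = L_p(Y \times [0,1],\, V \otimes \cB,\, \mu \otimes \lambda)$, so that the conditional expectation $\bE_C\colon D \to C$ is just integration against $\lambda$ in the second variable. I handle the three assertions in order, using each in proving the next.

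For the first assertion, note that for every $g \in \cU^+$ the map $r \mapsto \bS_r(g/C)$ is decreasing on $(0,1)$; this is immediate from the identity $\bS_r(g/C) = \bigvee_{r'>r} \bS_{r'}(g/C)$ displayed just before \fref{dfn:CondSlice}. Applying this to $f^+$ and $f^-$, the formula
\begin{gather*}
  \bS(f/C)(x,y) = \bS_y(f^+/C)(x) - \bS_{1-y}(f^-/C)(x)
\end{gather*}
shows that both summands decrease as $y$ grows (in the second, $1-y$ decreases and then the sign is flipped), so $\bS(f/C) \in D_{dec}$.

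For the second assertion, $\bS(f/C) \in D \subseteq C^{\perp\perp}$, and since we also assume $f \in C^{\perp\perp}$, \fref{prp:CondSliceType} reduces the problem to showing $\bS_r(\bS(f/C)/C) = \bS_r(f/C)$ for each $r \in (0,1)$. The conditional slice splits through positive and negative parts (using the invariance of $\bS_r$ under the order-reversal $\id \otimes \tau$ on $D$, which fixes $C$ pointwise), so I may assume $f \geq 0$. In the fixed presentation, for $g \in C^+$ one computes
\begin{gather*}
  \bP_C\big( g \leq \bS(f/C) \big)(x) = \lambda\big( \{y\colon g(x) \leq \bS_y(f/C)(x)\} \big),
\end{gather*}
and monotonicity in $y$ makes this set an initial interval of $[0,1]$ whose length equals $\sup\{y\colon \bS_y(f/C)(x) \geq g(x)\}$. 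One inclusion is then easy: for every $r'' > r$, $\bS_{r''}(f/C) \in S_r(\bS(f/C))$, and letting $r'' \to r^+$ yields $\bS_r(f/C) \leq \bS_r(\bS(f/C)/C)$. For the reverse, if $g \in S_r(\bS(f/C))$ there is $r' > r$ for which the supremum above is $\geq r'$ almost everywhere, and picking any $r'' \in (r,r')$ forces $g \leq \bS_{r''}(f/C) \leq \bS_r(f/C)$. The chief obstacle in this step is precisely the bookkeeping around the strict inequality ``$r' > r$'' in the definition of $S_r$ and the right-continuity of $r \mapsto \bS_r(f/C)$.

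For the third assertion I apply the second to conclude $\bS(f/C) \equiv_C f$; then by \fref{prp:TypeIsDist} the two functions share the same conditional distribution over $V$. An element of $D_{dec}$ is, for almost every $x$, a decreasing function of $y \in [0,1]$, and any decreasing function on $[0,1]$ is determined almost everywhere by its Lebesgue distribution (being its own decreasing rearrangement). Applying this fibrewise, the conditional distribution of an element of $D_{dec}$ over $V$ pins it down uniquely, so $f = \bS(f/C)$.
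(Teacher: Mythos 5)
Your proof is correct. For the first two assertions it follows essentially the same route as the paper: the identity $\bP_C\bigl(g \leq \bS(f/C)\bigr)(x) = \lambda\bigl(\{y\colon g(x)\leq \bS_y(f/C)(x)\}\bigr)$, together with monotonicity in $y$, is just a coordinate unpacking of the paper's key observation that for $h \in D_{dec}^+$ one has $g \in S_r(h)$ iff $g\otimes\chi_{[0,r']}\leq h$ for some $r'>r$, and the two-sided inclusion argument for $\bS_r(\bS(f/C)/C)=\bS_r(f/C)$ matches the paper's computation, as does the reduction to $f\geq 0$ via $\id\otimes\tau$. For the third assertion you diverge: the paper derives $\bS(f/C)\leq f$ directly from the slice description and then uses that $0\leq a\leq b$ with $\|a\|=\|b\|$ forces $a=b$ in $L_p$, whereas you invoke \fref{prp:TypeIsDist} to get equal fibrewise distributions and then use the fact that a decreasing function on $[0,1]$ is determined a.e.\ by its distribution (it is its own decreasing rearrangement). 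Both are valid; the paper's version is more intrinsically lattice-theoretic and avoids the (routine but suppressed) passage from ``equal conditional distributions'' to ``equal fibrewise distributions for a.e.\ $x$ simultaneously for all Borel sets,'' while yours makes the role of $D_{dec}$ as a space of rearranged representatives more transparent.
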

\begin{proof}
  The first assertion is clear.

  Before we proceed, let us first observe that if
  $f \in D_{dec}^+$, $g \in C^+$ and $r \in (0,1)$ then
  $g \in S_r(f/C)$ if and only if $g \otimes \chi_{[0,r']} \leq f$ for some $r' > r$.
  Thus $\bS_r(f/C) = \bigvee \{g\in C^+\colon g \otimes \chi_{[0,r']} \leq f, r' > r\}$.

  For the second assertion we assume that $f \in C^{\perp\perp}$.
  Let $h = \bS_r(f)$, and consider first the case where $f \geq 0$.
  Then $\bS_r(h/C)$ is equal (by our observation) to
  $\bigvee \{g\in C^+\colon g \otimes \chi_{[0,r']} \leq h, r' > r\}$.
  This is equal by construction of
  $h = \bS(f/C)$ to $\bigvee_{r' > r} \bS_{r'}(f/C) = \bS_r(f/C)$.
  Thus $f$ and $h$ have the same conditional slices and therefore
  the same type (over $C$).
  In the general case this implies that $h^+ \equiv_C f^+$
  and $(\id \otimes \tau)(h^-) \equiv_C f^-$.
  Since $\id \otimes \tau$ is an automorphism of $D$ fixing $C$,
  by quantifier elimination we obtain $h^- \equiv_C f^-$.
  Thus $h \equiv_C f$.

  For the third assertion, let us first
  consider the case $f \in D_{dec}^+$.
  By our observation
  $\bS_r(f/C) = \bigvee \{g\in C^+\colon g \otimes \chi_{[0,r']} \leq f, r' > r\}$,
  so $\bS_r(f/C) \otimes \chi_{[0,r]} \leq f$ and thus $\bS(f/C) \leq f$.
  On the other hand we already know that $f \equiv_C \bS(f/C)$,
  so $\|f\| = \|\bS(f/C)\|$, and together with $0 \leq \bS(f/C) \leq f$ we
  obtain $f = \bS(f/C)$.
  If $f \in D_{dec}$ is negative then
  $(\id\otimes\tau)(-f) \in D_{dec}$ is positive, so
  $\bS(f/C)
  = -(\id\otimes\tau)\bS(-f/C)
  = -(\id\otimes\tau)\bS((\id\otimes\tau)(-f)/C)
  = -(\id\otimes\tau)^2(-f) = f$.
  The general case ensues.
\end{proof}

It follows that not only do conditional slices serve as a complete
system of invariants for types in $C^{\perp\perp}$, but they also allow easy
extraction of various other invariants of such types:
\begin{prp}
  \label{prp:CondSliceExpectNorm}
  For all $f \in \cU$ we have
  \begin{gather*}
    \bE_C(f) = \int_0^1 \bS_r(f/C)\,dr \\
    \|f{\restriction}_{C^{\perp\perp}}\| =
    \left( \int_0^1 \|\bS_r(f/C)\|^p\,dr \right)^{\frac{1}{p}}
  \end{gather*}
  The first integral is just integration of a function of two
  variables: $\bE_C(f)(x) = \int_0^1 \bS_r(f/C)(x)\,dr$ for (almost) all
  $x$.
\end{prp}
\begin{proof}
  Let $h(x,r) = \bS(f/C)(x,r) = \bS_r(f/C)(x)$.
  Then $h \equiv_C f{\restriction}_{C^{\perp\perp}}$, whereby:
  \begin{gather*}
    \bE_C(f)(x) = \bE_C(h)(x) = \int_0^1 h(x,r)\,dr \\
    \|f{\restriction}_{C^{\perp\perp}}\|^p = \|h\|^p
    = \int_0^1 \int_C |h(x,r)|^p\,d\mu(x) \,dr
    = \int_0^1 \|h(\cdot,r)\|^p\,dr
  \end{gather*}
  For both we use Fubini's theorem (and,
  for the first, the definition of conditional expectation).
\end{proof}

\begin{rmk}
  \label{rmk:SecondCondExpUniqueness}
  Since every two presentations of $C$ as a concrete $L_p$ space
  differ by (essentially) no more than a density change, one can
  verify that the function
  $\int_0^1 \bS_r(f/C)(x)\,dr \in C$ does not depend on the
  presentation of $C$, justifying the notation
  $\bE_C(f) = \int_0^1 \bS_r(f/C)\,dr$.
  Alternatively, one may develop a theory of integration of $C$-valued
  functions (and more generally, of $E$-valued functions, where $E$ is
  any Dedekind complete vector lattice), in which case the identity
  $\bE_C(f) = \int_0^1 \bS_r(f/C)\,dr$ holds directly, the right hand side
  being the $C$-valued integral of the mapping $r \mapsto \bS_r(f/C)$.

  Either way this gives an alternative proof to the fact
  (Proposition \ref{prp:CExpUnq}) that the conditional expectation
  mapping $\bE_C\colon \cU \to C$ does not depend on any particular
  choice of presentation for $\cU$ and $C$.
\end{rmk}

We get a similar result for the distance between types, but a little
more work is required.

Let $S_1^{\perp\perp}(C) \subseteq S_1(C)$ denote the set of all types whose
realizations are in $C^{\perp\perp}$
and let $S_1^{\perp}(C)\subseteq S_1(C)$ denote
the set of types whose realizations are in $C^\perp$.

\begin{thm}
  \label{thm:CondSliceIsom}
  For every type $p \in S_1^{\perp\perp}(C)$, $\bS(p)$ is its unique realization
  in $D_{dec}$.
  Thus
  $\bS\colon S_1^{\perp\perp}(C) \to D_{dec}$ is a bijection, whose inverse is
  the mapping $f \mapsto \tp(f/C)$.
  Moreover, equipping $S_1^{\perp\perp}(C)$ with the usual distance between
  types,  this bijection is an isometry.
\end{thm}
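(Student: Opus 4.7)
The plan is to first dispose of the bijection statement by a direct appeal to \fref{lem:MergedCondSlice}, and then attack the isometry statement by a conditional quantile-coupling argument.

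For the bijection: given any $p \in S_1^{\perp\perp}(C)$, pick a realization $f \in C^{\perp\perp}$. By \fref{lem:MergedCondSlice} we have $\bS(f/C) \in D_{dec}$ and $\bS(f/C) \equiv_C f$, so $\bS(p) = \bS(f/C) \in D_{dec}$ realizes $p$. This also shows $\bS(p) \in C^{\perp\perp}$ since $D \leq C^{\perp\perp}$. For uniqueness, suppose $h \in D_{dec}$ realizes $p$; then $h \in C^{\perp\perp}\cap D_{dec}$, so the third assertion of \fref{lem:MergedCondSlice} gives $\bS(h/C) = h$, while $\bS(h/C) = \bS(p)$ since $\bS(\cdot/C)$ depends only on the type. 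Hence $h = \bS(p)$. This also identifies the inverse of $\bS$ as $f \mapsto \tp(f/C)$.

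For the isometry, fix $p,q \in S_1^{\perp\perp}(C)$. The inequality $d(p,q) \leq \|\bS(p) - \bS(q)\|$ is immediate, since $\bS(p)$ and $\bS(q)$ are themselves realizations of $p$ and $q$. For the converse, fix a presentation $C = L_p(Y,V,\mu) \subset L_p(X,U,\mu) = \cU$ and let $f,g \in C^{\perp\perp}$ be arbitrary realizations of $p$ and $q$. The plan is to show
\begin{gather*}
\|f - g\|^p \ \geq\ \int_Y \int_0^1 |\bS_r(f/C)(y) - \bS_r(g/C)(y)|^p\, dr\, d\mu(y) \ =\ \|\bS(p)-\bS(q)\|^p,
\end{gather*}
where the final equality is Fubini applied to the definition of $\bS(\cdot/C)$. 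Taking the infimum over $f,g$ gives $d(p,q) \geq \|\bS(p)-\bS(q)\|$.

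The core of the argument is a pointwise (over $y$) classical coupling bound. Writing $\|f-g\|^p = \int_Y \bE_C(|f-g|^p)(y)\, d\mu(y)$, it suffices to prove the fiberwise inequality
\begin{gather*}
\bE_C(|f-g|^p)(y) \ \geq\ \int_0^1 |\bS_r(f/C)(y) - \bS_r(g/C)(y)|^p\, dr \qquad \text{for a.e.\ } y \in Y.
\end{gather*}
By \fref{lem:CondSliceDist} the decreasing function $r \mapsto \bS_r(f/C)(y)$ is precisely the (right-continuous) quantile function of the conditional distribution of $f$ given $V$ at the point $y$, and similarly for $g$. The desired inequality is then the Hardy--Littlewood--Fréchet rearrangement inequality: among all couplings of two real distributions with prescribed marginals, the $L_p$ transport cost is minimized by the monotone (quantile) coupling, whose value is precisely the right-hand side. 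Applying this with the conditional distributions of $f$ and $g$ given $y$ yields the fiberwise bound; integration and Fubini conclude the proof.

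The main obstacle is the rigorous passage between the model-theoretic object $\bS_r(f/C)$ and the classical quantile function, together with making the pointwise coupling lemma measurable in $y$. \fref{lem:CondSliceDist} already provides the identification as quantile functions. Measurability in $y$ is handled by fixing rational thresholds $r, t \in \bQ\cap(0,1) \times \bR$, writing the coupling bound as a supremum over such countable data, and invoking standard Fubini arguments (which are legitimate here since we may reduce to the case $\mu(Y) < \infty$ by decomposability).
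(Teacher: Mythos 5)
Your proof is correct, but the isometry half takes a genuinely different route from the paper's. The paper argues ``from below'': it first treats characteristic functions (identifying $T$ with the region under the graph of $\bE_C(f)$ and comparing $\bE_C(T\setminus R)$ with $\bE_C(T'\setminus R')$), then simple functions with integer range via the second-difference coefficients $c_n=(n+1)^p-2n^p+(n-1)^p\geq0$ and the identity $n^p=\sum_{i\leq j<n}c_{j-i}$, and finally passes to general $f$ by rescaling, monotone approximation, and translation. You instead invoke the classical one-dimensional optimal-transport fact that the monotone (quantile) coupling minimizes $\int|s-t|^p$ among couplings with prescribed marginals, applied fiberwise to the regular conditional distributions of $(f,g)$ given $V$, with $r\mapsto\bS_r(\cdot/C)(y)$ identified as the conditional quantile function via \fref{lem:CondSliceDist}. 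Your route is shorter and more conceptual, at the cost of importing disintegration of measures and the Cambanis--Simons--Stout/Hardy--Littlewood inequality as black boxes, where the paper's combinatorial argument is self-contained (indeed, the $c_n$ computation is essentially a hands-on proof of that rearrangement inequality for the discrete case). Two small points you should patch: \fref{lem:CondSliceDist} is stated only for $f\geq0$, so for signed $f$ you must either reduce to the positive case by adding $M\chi_S$ (checking that slices commute with such translations, as the paper does at the very end) or argue directly from $\bS_r(f/C)=\bS_r(f^+/C)-\bS_{1-r}(f^-/C)$ together with the disjointness $\bS_r(f^+/C)\wedge\bS_{1-r}(f^-/C)=0$ that this is a.e.\ the conditional quantile of $f$ (the possible disagreement with the usual quantile at atoms of the conditional law occurs only for countably many $r$, hence is harmless in the $dr$-integral); and the quantity $\bE_C(|f-g|^p)$ is an $L_1$-conditional expectation rather than the $L_p$-operator $\bE_C$ of \fref{ntn:AbstractCondExp}, which is worth flagging even though it changes nothing.
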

\begin{proof}
  The first assertion follows immediately from
  \fref{lem:MergedCondSlice}, so we concentrate on the isometry
  assertion.

  Let $p,q \in S_1^{\perp\perp}(C)$, $f = \bS(p)$, $g = \bS(q)$.
  Then $f \models p$ and $g \models q$ by \fref{lem:MergedCondSlice}, and by
  definition of the distance between types: $d(p,q) \leq d(f,g)$.
  We should now show that if $f' \models p$ and $g' \models q$ are any two other
  realizations then $d(f',g') \geq d(f,g)$.

  Let us fix a presentation of $C \leq \cU$ as the $L_p$ spaces of
  $(Y,V,\mu) \subset (X,U,\mu)$.
  By a density change argument we may assume there is $S \in V$ such
  that $\mu(S) = 1$, and such that $f,g,f',g'$ are in the band generated
  by $\chi_S$ (in fact, for all intents and purposes we may simply assume
  that $Y = S$).

  Having a presentation we may speak of characteristic and simple
  functions.
  Let us first consider the case where both $f$ and $g$ are
  characteristic.
  Notice that the type of $f$ over $C$ says that $f$ is
  characteristic: $0 \leq f \leq \chi_S$ and $f\wedge(\chi_S-f) = 0$.
  Thus we may write $f = \chi_T$, $f' = \chi_{T'}$.
  As $f \in D$ we may identify $T$ with a $(V \otimes \cB)$-measurable subset
  of $Y \times [0,1]$.
  Moreover, $f = \chi_T \in D_{dec}$, so $T$ must be equal to the ``area
  under the graph'' of $\bE_C(f)$:
  $T = \{(x,y) \in Y \times [0,1]\colon y \leq \bE_C(f)(x)\}$.
  On the other hand, $f \equiv_C f' \Longrightarrow \bE_C(f) = \bE_C(f')$.

  We make similar assumptions and observations for
  $g = \chi_R$, $g' = \chi_{R'}$.
  In particular:
  $R = \{(x,y) \in Y \times [0,1]\colon y \leq \bE_C(g)(x)\}$.
  It follows that
  $\bE_C(T \setminus R) = \bE_C(f) \dotminus \bE_C(g)$, while for
  $T',R'$ we only have:
  $\bE_C(T' \setminus R') \geq \bE_C(f') \dotminus \bE_C(g')$, and putting together:
  $\bE_C(T' \setminus R') \geq \bE_C(T \setminus R)$.
  Same holds of course exchanging $T$ and $R$.
  We obtain:
  $d(f',g')^p = \int_Y [\bE_C(T' \setminus R') + \bE_C(R'\setminus T')]d\mu
  \geq \int_Y [\bE_C(T \setminus R) + \bE_C(R\setminus T)]d\mu = d(f,g)^p$.
  \begin{align*}
    d(f',g')^p &
    = \mu(T' \setminus R') + \mu(R' \setminus T') \\ &
    = \int_Y [\bE_C(T' \setminus R') + \bE_C(R'\setminus T')]d\mu \\ &
    \geq \int_Y [\bE_C(T \setminus R) + \bE_C(R\setminus T)]d\mu \\ &
    = d(f,g)^p
  \end{align*}
  Let us now consider the case where $f$ and $g$ are simple positive
  functions with range in $\{0,\ldots,n\}$.
  We can write them in a unique fashion as
  $f = \sum_{i<n} \chi_{T_i}$, $g = \sum_{i<n} \chi_{R_i}$ where $T_0 \subseteq T_1 \subseteq \ldots \subseteq T_{n-1}$ and
  $R_0 \subseteq R_1 \subseteq \ldots \subseteq R_{n-1}$.
  As above the decompositions are coded in the types over $C$,
  so we get corresponding decompositions
  $f' = \sum_{i<n} \chi_{T_i'}$, $g' = \sum_{i<n} \chi_{R_i'}$.
  Since $f,g \in D_{dec}$ we must have $\chi_{T_i},\chi_{R_i} \in D_{dec}$ whereby
  $\chi_{T_i} = \bS(\chi_{T_i'})$, $\chi_{R_i} = \bS(\chi_{R_i'})$.
  As above it follows that
  $\bE_C(T'_i \setminus R'_j) \geq \bE_C(T_i \setminus R_j)$,
  $\bE_C(R'_i \setminus T'_j) \geq \bE_C(R_i \setminus T_j)$.

  In order to calculate $d(f',g')$, let us define
  $c_0 = 1$ and for $n > 0$: $c_n = (n+1)^p - 2n^p + (n-1)^p$.
  As $x \mapsto x^p$ is convex all $c_n$ are positive.
  One shows by induction first that
  $(n+1)^p - n^p = \sum_{i \leq n} c_i$
  and then that $n^p = \sum_{i<n} (n-i)c_i$.
  The last identity can also be written as
  $n^p = \sum_{i\leq j<n} c_{j-i}$.
  It follows that
  $d(f',g')^p = \sum_{i\leq j<n} c_{j-i}[\mu(T'_j \setminus R'_i) + \mu(R'_j\setminus T'_i)]$, and
  similarly for $f,g$.
  Thus:
  \begin{align*}
    d(f',g')^p
    & = \sum_{i\leq j<n} c_{j-i}\int_Y [\bE_C(T'_j\setminus R_i')+\bE_C(R_j'\setminus T_i')]\,d\mu\\
    & \geq \sum_{i\leq j<n} c_{j-i}\int_Y [\bE_C(T_j\setminus R_i)+\bE_C(R_j\setminus T_i)]\,d\mu = d(f,g)^p.
  \end{align*}

  For simple functions with range, say, in
  $\{\frac{m}{n}\colon m \leq n^2\}$, just apply the previous result and
  shrink by a factor of $n$.
  Arbitrary positive $L_p$ functions are increasing limits
  (both pointwise and in $L_p$ norm) of such functions, whence the
  result for positive functions.
  If $f$ and $g$ are possibly negative but bounded from below, say
  $f,g \geq -M\chi_S$, then same hold of $f',g'$ and we have:
  $d(f',g') = d(f'+M\chi_S,g'+M\chi_S) \geq d(f+M\chi_S,g+M\chi_S) = d(f,g)$.
  Since the bounded functions are dense in $L_p$ we obtain the general
  case.
\end{proof}

This can be extended to obtain an explicit expression for the distance
between arbitrary $1$-types over $C$ (i.e., not necessarily of
functions in $C^{\perp\perp}$).
\begin{ntn}
  \label{ntn:TypeManips}
  For $p = \tp(f/C) \in S_1(C)$, let:
  \begin{enumerate}
  \item $p^+ = \tp(f^+/C)$, $p^- =\tp(f^-/C)$.
  \item $\|p\| = \|f\|$.
  \item $p{\restriction}_{C^{\perp\perp}} = \tp(f{\restriction}_{C^{\perp\perp}}/C)$.
  \item $p{\restriction}_{C^\perp} = \tp(f{\restriction}_{C^\perp}/C)$.
  \end{enumerate}
\end{ntn}

\begin{cor}
  \label{cor:TypeDist}
  For all $p,q \in S_1(C)$:
  \begin{align*}
    d(p,q)^p =
    \begin{aligned}[t]
      &\int_0^1 \|\bS_r(p)-\bS_r(q)\|^p\,dr\\
      &\quad+ \big| \|p^+{\restriction}_{C^\perp}\| - \|q^+{\restriction}_{C^\perp}\| \big|^p
      + \big| \|p^-{\restriction}_{C^\perp}\| - \|q^-{\restriction}_{C^\perp}\| \big|^p
    \end{aligned}
  \end{align*}
\end{cor}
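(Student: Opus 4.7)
The plan is to reduce the computation to the already-proven isometry result \fref{thm:CondSliceIsom} by splitting every $1$-type over $C$ along the band decomposition $\cU = C^{\perp\perp} \oplus C^\perp$. Concretely, for any realizations $f' \models p$ and $g' \models q$, write $f' = f'^1 + f'^2$ and $g' = g'^1 + g'^2$ with superscript $1$ denoting the component in $C^{\perp\perp}$ and superscript $2$ the one in $C^\perp$. Because these components are lattice-disjoint, the abstract $L_p$ identity gives
\begin{gather*}
  \|f' - g'\|^p = \|f'^1 - g'^1\|^p + \|f'^2 - g'^2\|^p.
\end{gather*}
By \fref{lem:TypeInOutBand}, realizing $p$ (resp.\ $q$) is the same as independently realizing $p{\restriction}_{C^{\perp\perp}}$ and $p{\restriction}_{C^\perp}$ (resp.\ for $q$), so taking the infimum factors into two independent infima:
\begin{gather*}
  d(p,q)^p = d\bigl(p{\restriction}_{C^{\perp\perp}},\, q{\restriction}_{C^{\perp\perp}}\bigr)^p + d\bigl(p{\restriction}_{C^\perp},\, q{\restriction}_{C^\perp}\bigr)^p.
\end{gather*}

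The first summand is immediate from \fref{thm:CondSliceIsom}: $\bS$ is an isometry onto $D_{dec} \subseteq D = C \otimes L_p([0,1],\cB,\lambda)$, and $\bS_r(p)$ depends only on $p{\restriction}_{C^{\perp\perp}}$, so fixing a presentation $C = L_p(Y,V,\mu)$ and applying Fubini yields
\begin{gather*}
  d\bigl(p{\restriction}_{C^{\perp\perp}},\, q{\restriction}_{C^{\perp\perp}}\bigr)^p = \|\bS(p)-\bS(q)\|_D^p = \int_0^1 \|\bS_r(p)-\bS_r(q)\|^p\,dr.
\end{gather*}

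For the $C^\perp$ summand, I will first note that by quantifier elimination (\fref{QE}) the type over $C$ of any element $h \in C^\perp$ is determined by the pair $(\|h^+\|,\|h^-\|)$; write these as $(a_i,b_i)$ for $p{\restriction}_{C^\perp}, q{\restriction}_{C^\perp}$. The lower bound $d^p \geq |a_1-a_2|^p + |b_1-b_2|^p$ comes from the pointwise convexity inequality $|u - v|^p \geq |u^+ - v^+|^p + |u^- - v^-|^p$ (valid for real $u,v$ and $p\geq 1$, and trivial when $u,v$ have the same sign; otherwise it reduces to $(a+b)^p \geq a^p + b^p$) integrated over the measure space, combined with the triangle inequality in the norms $\|\cdot\|$ applied separately to positive and negative parts. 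For the matching upper bound I will pick disjoint measurable sets $A,B$ of measure $1$ inside the atomless part of $C^\perp$ (available by saturation) and set $f' = a_1\chi_A - b_1\chi_B$, $g' = a_2\chi_A - b_2\chi_B$, obtaining $\|f' - g'\|^p = |a_1-a_2|^p + |b_1-b_2|^p$ exactly.

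The only genuine obstacle is the first step, i.e.\ justifying that the distance splits additively along the band decomposition; this hinges on the observation that automorphisms of $\cU$ fixing $C$ act independently on $C^{\perp\perp}$ and $C^\perp$ (the content of \fref{lem:TypeInOutBand}), combined with the abstract $L_p$ identity for disjoint elements. Once this reduction is in place, the two pieces are handled respectively by the isometry theorem and by a direct computation using quantifier elimination.
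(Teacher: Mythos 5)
Your proposal is correct and follows essentially the same route as the paper: split $d(p,q)^p$ additively along the band decomposition $\cU = C^{\perp\perp}\oplus C^\perp$ (justified via \fref{lem:TypeInOutBand} and the disjointness identity), apply \fref{thm:CondSliceIsom} to the $C^{\perp\perp}$ part, and handle the $C^\perp$ part by the same convexity lower bound and the same explicit attainment with disjoint characteristic functions. Your spelled-out justification of the pointwise inequality $|u-v|^p \geq |u^+-v^+|^p+|u^--v^-|^p$ and of the factoring of the infimum is a welcome (and valid) elaboration of steps the paper leaves implicit.
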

\begin{proof}
  Notice that for all $f,g$:
  $\|f-g\|^p =
  \|f{\restriction}_{C^{\perp\perp}} - g{\restriction}_{C^{\perp\perp}}\|^p +
  \|f{\restriction}_{C^{\perp}} - g{\restriction}_{C^{\perp}}|\|^p$,
  so $d(p,q)^p =
  d(p{\restriction}_{C^{\perp\perp}},q{\restriction}_{C^{\perp\perp}})^p
  +  d(p{\restriction}_{C^\perp},q{\restriction}_{C^\perp})^p$.
  By  \fref{thm:CondSliceIsom}:
  $d(p{\restriction}_{C^{\perp\perp}},q{\restriction}_{C^{\perp\perp}})^p
  = \|\bS(p)-\bS(q)\|^p
  = \int_0^1 \|\bS_r(p)-\bS_r(q)\|^p\,dr$.

  We are left with showing that if
  $p,q \in S_1^\perp(C)$ then
  $d(p,q)^p
  = \big| \|p^+\| - \|q^+\| \big|^p
  + \big| \|p^-\| - \|q^-\| \big|^p$.
  If $f \models p$ then $p$ is
  determined by the fact that $f \in C^{\perp}$ and by
  the numbers $\|f^+\|,\|f^-\|$.
  If $g \models q$ then:
  \begin{align*}
    \|f-g\|^p
    & \geq \|f^+ - g^+\|^p + \|f^- - g^-\|^p\\
    & \geq \big| \|f^+\| - \|g^+\| \big|^p
    + \big| \|f^-\| - \|g^-\| \big|^p \\
    & = \big| \|p^+\| - \|q^+\| \big|^p
    + \big| \|p^-\| - \|q^-\| \big|^p.
  \end{align*}
  This lower bound can be attained by
  taking $f^+$ and $g^+$ to be the constants $\|f^+\|$ and $\|g^+\|$,
  respectively, over a set $A$ of measure $1$ (where $\chi_A\in
  C^\perp$) and similarly for $f^-$ and $g^-$ over a disjoint set $B$
  of measure $1$ (i.e. $\chi_B\in {\dcl(\chi_A,C)}^\perp$).
\end{proof}

Finally, we observe that conditional slices yield another
characterization of independence. Indeed, let $C \leq B$, and let $E
= B \otimes L_p([0,1],\cB,\lambda)$.
Then $D = C\otimes L_p([0,1],\cB,\lambda)\leq E$, and clearly $D \ind_C B$.

\begin{lem}
  \label{lem:IndBySliceSingle}
  For all $f \in \cU$, the following are equivalent:
  \begin{enumerate}
  \item $f \ind_C B$.
  \item For all $0<r<1$: $\bS_r(f/B) = \bS_r(f/C)$.
  \item For all $0<r<1$: $\bS_r(f/B) \in C$.
  \end{enumerate}
\end{lem}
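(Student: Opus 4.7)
The implication (ii) $\Longrightarrow$ (iii) is immediate since $\bS_r(f/C) \in C$ by construction. The plan is to prove (iii) $\Longrightarrow$ (i) and (i) $\Longrightarrow$ (ii) by exploiting the canonical realization $\bS(f/\cdot)$ from \fref{lem:MergedCondSlice} together with the fact, noted just above, that $D \ind_C B$, and using throughout the identification $\ind = \ind[*]$.

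For (iii) $\Longrightarrow$ (i), if every slice $\bS_r(f/B)$ lies in $C$, then the canonical realization $\bS(f/B)$, built as a lattice-theoretic combination of those slices with characteristic functions of subintervals of $[0,1]$, in fact lies in $D$ rather than merely in $E = B\otimes L_p([0,1],\cB,\lambda)$. Applying \fref{lem:MergedCondSlice} with $B$ in place of $C$ gives $\bS(f/B) \equiv_B f{\restriction}_{B^{\perp\perp}}$. Combined with $D \ind_C B$ and invariance of $\ind$, this yields $f{\restriction}_{B^{\perp\perp}} \ind_C B$. The complementary piece $f{\restriction}_{B^\perp}$ is independent from $B$ over $C$ by the orthogonality of bands $B^{\perp\perp}$ and $B^\perp$: every element of $\dcl(f{\restriction}_{B^\perp},C)$ splits as $a+c$ with $a \in B^\perp$ and $c \in C$, so $\bE_B(a+c)=c=\bE_C(a+c)$. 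Band-decomposing $\dcl(fC)$ along $B^{\perp\perp}\oplus B^\perp$ (band projections commute with lattice operations) then combines the two pieces into $f \ind_C B$.

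For (i) $\Longrightarrow$ (ii), I would first eliminate the component of $f$ in $B^{\perp\perp}\cap C^\perp$. Writing $f = f_{11} + f_{21} + f_{22}$ along the three bands $C^{\perp\perp}$, $B^{\perp\perp}\cap C^\perp$, and $B^\perp$, the projection $f_{11}$ lies in $\dcl(fC)$, hence $(f-f_{11})^+ = f_{21}^+ + f_{22}^+ \in \dcl(fC)$. Since $\bE_C$ annihilates both summands, $\bE_B = \bE_C$ on $\dcl(fC)$ forces $\bE_B(f_{21}^+) = 0$, whence $f_{21}^+ = 0$, and similarly $f_{21}^- = 0$. Thus $\bS_r(f/C) = \bS_r(f_{11}/C)$ and $\bS_r(f/B) = \bS_r(f_{11}/B)$. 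The key step is then to set $h = \bS(f_{11}/C) \in D$. By \fref{lem:MergedCondSlice} one has $h \equiv_C f_{11}$; since $h \in D$ gives $h \ind_C B$ and $f_{11}\in\dcl(fC)$ satisfies $f_{11}\ind_C B$, stationarity (\fref{prp:*-indep-properties}) yields $h \equiv_B f_{11}$ and in particular $\bS_r(h/B) = \bS_r(f_{11}/B)$. Finally, $h$ lies in $B^{\perp\perp}$ (since $D \leq C^{\perp\perp} \leq B^{\perp\perp}$) and is decreasing in the second coordinate of $E = B\otimes L_p([0,1])$ by construction, so the third assertion of \fref{lem:MergedCondSlice} applied over $B$ gives $\bS(h/B) = h$ and hence $\bS_r(h/B) = \bS_r(f_{11}/C) = \bS_r(f/C)$. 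Chaining the identities completes (i) $\Longrightarrow$ (ii).

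The main obstacle will be keeping the embeddings compatible: one must view $D$ simultaneously as a sublattice of $C^{\perp\perp}$ (its fixed embedding in $\cU$) and as a sublattice of $E = B\otimes L_p([0,1])$, with the ``decreasing in the second coordinate'' property intrinsic enough that the third assertion of \fref{lem:MergedCondSlice} can be applied over $B$ to $h=\bS(f_{11}/C)$. The reduction $f_{21}=0$ and the orthogonal splitting of $\dcl(fC)$ along $B^{\perp\perp}\oplus B^\perp$ are routine but require careful bookkeeping of band projections.
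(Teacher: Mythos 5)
Your proof is correct and follows essentially the same route as the paper: both directions hinge on the canonical realizations $\bS(f/C) \in D_{dec}$ and $\bS(f/B) \in E_{dec}$, the observation that $D \ind_C B$, and stationarity/invariance of independence. Your explicit band bookkeeping (splitting off $f{\restriction}_{B^\perp}$ and deriving $f_{21}=0$ from $\bE_B=\bE_C$ on $\dcl(fC)$) just unpacks what the paper compresses into the opening reduction to $f \in B^{\perp\perp}$ and the remark that then $f \ind_C B$ forces $f \in C^{\perp\perp}$.
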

\begin{proof}
  First, we may assume that $f \in B^{\perp\perp}$, as replacing $f$ with its
  component in this band leaves all statements unchanged.

  Assume first that $f \ind_C B$, and let
  $f' = \bS(f/C) \in D_{dec} \subseteq E_{dec}$.
  Then $f \in B^{\perp\perp} \Longrightarrow f \in C^{\perp\perp}$, so $f \equiv_C f'$.
  Now $D \ind_C B \Longrightarrow f' \ind_C B$, and by stationarity we get that
  $f' \equiv_B f$.
  As $f' \in E_{dec}$ we must have $f' = \bS(f/B)$, so
  $\bS_r(f/B) = \bS_r(f/C)$ for all $0 < r < 1$.

  Conversely, assume that $\bS_r(f/B) \in C$ for all $0 < r < 1$,
  and let $f' = \bS(f/B) \in E_{dec}$.
  Then $f \equiv_B f'$  and $f' \in D_{dec}$, so $f' \ind_C B$ and
  therefore $f \ind_C B$.
\end{proof}

Using \prettyref{prp:IndSingle}, we conclude:
\begin{prp}\label{prp:independence-via-slices}
  Let $C \leq B \leq \cU$ be sublattices and $A$ any set.
  Then the following are equivalent:
  \begin{enumerate}
  \item $A \ind_C B$.
  \item $\bS_r(f/B) = \bS_r(f/C)$
    for every $r \in (0,1)$ and $f$ which is a
    term in members of $A$.
  \item $\bS_r(f/B) \in C$
    for every $r \in (0,1)$ and $f$ which is a
    term in members of $A$.
  \end{enumerate}
\end{prp}

\section{Canonical bases}

The notion of the \emph{canonical base} of a type comes from general
stability theory.
It is, in a sense, a minimal set of parameters which is required to
define the type.
Since we did not discuss definability of types in this paper we shall
use an alternative approach, namely, viewing the canonical base as a
canonical parameter for the \emph{parallelism class} of the type.
We will try and give a quick introduction to the uninitiated.

We again work inside a $\kappa$-universal domain
$\cU$ for the theory of atomless $L_p$ Banach lattices, and we take
$(X,U,\mu)$ to be a measure space such that $\cU=L_p(X,U,\mu)$.

Since a type over a subset $A \subseteq \cU$ is the same as a type over
$\dcl(A)$, i.e., the Banach sublattice generated by $A$, we will only
consider types over Banach sublattices of $\cU$.
For $A \leq B \leq \cU$, $q \in S_n(B)$ and $p \in S_n(A)$, we say
that $q$ is a \emph{non-forking extension} of $p$ if
$\bar f \models q$ implies $\bar f \models p$ and
$\bar f \ind_A B$.
By \fref{prp:*-indep-properties} a type $p \in S_n(A)$ admits a
\emph{unique} non-forking extension to a type over $B$ (i.e., all
types over sublattices of $\cU$ are \emph{stationary}).
We will use $p\rest^B$ to denote the unique non-forking extension.

The group of automorphisms $\Aut(\cU)$ acts on types over subsets of
$\cU$ naturally, by acting on their parameters.
We wish to distinguish those automorphisms $f\in\Aut(\cU)$ which
\emph{essentially} fix $p$.
In order to compare the two types $p$ and $f(p)$, which may have
distinct domains $A$ and $f(A)$, we compare their unique non-forking
extensions to $A\cup f(A)$.
We say that $p$ and $f(p)$ are \emph{parallel}
if $p\rest^{A\cup f(A)} = f(p)\rest^{A\cup f(A)}$, or equivalently,  if
$p\rest^\cU = f(p)\rest^\cU$, noticing that the latter is always equal
to $f(p\rest^\cU)$.

This leads us to:
\begin{dfn}
  A \emph{canonical base} for a type $p \in S_n(A)$
  is a subset $C \subseteq \cU$ such that an
  automorphism $f \in \Aut(\cU)$ fixes $p\rest^\cU$
  if and only if it fixes each member of $C$.
\end{dfn}
(In a general stable theory we will usually only define canonical
bases for stationary types.)

Notice that $f \in \Aut(\cU)$ fixes $p\rest^\cU$ if and only if it fixes
set-wise the class $\{q \in S_n(B)\colon B \leq \cU, q\rest^\cU = p\rest^\cU\}$, called the
\emph{parallelism class} of $p$.

It follows from the definition that if $C$ and $C'$ are two canonical
bases for $p$ then $\dcl(C) = \dcl(C')$, so it is legitimate in a
sense to speak of \emph{the} canonical base of a type.
In a general stable theory canonical bases of types need not always
exist as sets of ordinary elements as we defined above.
They do exist in general as sets of \emph{imaginary elements}, a
topic which we will not discuss in the present paper (see
\cite[Section~5]{BenYaacov-Usvyatsov:CFO}).

Our goal in this section is to show that in atomless $L_p$ Banach
lattices  canonical bases always exist as sets of ordinary elements
(some would call this having \emph{built-in canonical bases}).
In fact, this has already been essentially proved above in
\fref{sec:CondSlice}.

\begin{thm}
  Let $\bar f \in \cU^n$ be a tuple and  $A \leq \cU$ a sublattice.
  Let
  \begin{gather*}
    \Cb(\bar f/A) =
    \{\bS_r(t(\bar f)/A)\colon r\in (0,1) \text{ and term } t \text{ in } n \text{
      variables}\}.
  \end{gather*}
  Then $\Cb(\bar f/A)$
  only depends on $p = \tp(\bar f/A)$ and is a canonical base
  for $p$.

  In the case where $n = 1$ the set
  $\{\bS_r(f/A)\colon r \in (0,1)\}$ suffices.
\end{thm}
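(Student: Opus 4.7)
The plan is to take $C_0 := \dcl(\Cb(\bar f/A))$, the Banach sublattice of $A$ generated by the listed elements, as the candidate canonical base. That $\Cb(\bar f/A)$ depends only on $p = \tp(\bar f/A)$ is immediate, since each $\bS_r(t(\bar f)/A)$ is a function of $\tp(t(\bar f)/A)$ and the latter is determined by $p$.

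The central input is the independence statement $\bar f \ind_{C_0} A$. By \fref{prp:independence-via-slices} this is equivalent to $\bS_r(t(\bar f)/A) \in C_0$ for every term $t$ and $r \in (0,1)$, which holds by construction of $C_0$. Given this, stationarity makes $\tp(\bar f/A)$ the unique non-forking extension of $\tp(\bar f/C_0)$. Now if $\phi \in \Aut(\cU)$ fixes $C_0$ pointwise, then $\tp(\phi(\bar f)/C_0) = \tp(\bar f/C_0)$, and the parallel analysis realises $\tp(\phi(\bar f)/\phi(A))$ as the non-forking extension of this same type to $\phi(A)$. Transitivity of non-forking together with stationarity forces the extensions of $p$ and $\phi(p)$ to $\dcl(A \cup \phi(A))$ to coincide, so $\phi$ preserves the parallelism class of $p$.

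For the converse I set $B = \dcl(A \cup \phi(A))$ and pick $\bar h$ realising the common non-forking extension of $p$ and $\phi(p)$ to $B$. The two independences $\bar h \ind_A B$ and $\bar h \ind_{\phi(A)} B$ feed into \fref{prp:independence-via-slices} to yield $\bS_r(t(\bar h)/A) = \bS_r(t(\bar h)/B) = \bS_r(t(\bar h)/\phi(A))$ for all $r$ and $t$. Using $\bar h \equiv_A \bar f$ on the left, and $\bar h \equiv_{\phi(A)} \phi(\bar f)$ together with the automorphism-invariance of conditional slices on the right, this rewrites as $\bS_r(t(\bar f)/A) = \phi(\bS_r(t(\bar f)/A))$, so $\phi$ fixes $\Cb(\bar f/A)$ and hence all of $C_0$ pointwise.

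The refinement to $n = 1$ is where the statement is genuinely sharper. Let $C_0' := \dcl\{\bS_r(f/A) : r \in (0,1)\}$, so a priori $C_0' \subseteq C_0$. Because $f$ is a single element I can apply \fref{lem:IndBySliceSingle} directly: the mere inclusions $\bS_r(f/A) \in C_0'$ already give $f \ind_{C_0'} A$, with no appeal to terms. Then \fref{prp:IndSingle} propagates this to $t(f) \ind_{C_0'} A$ for every term $t$, and a second application of \fref{lem:IndBySliceSingle} forces $\bS_r(t(f)/A) \in C_0'$ for all $r$ and $t$, so $C_0 = C_0'$. The step I expect to be the main obstacle is exactly this last one: the passage from $f \ind_{C_0'} A$ to $t(f) \ind_{C_0'} A$ fails in a general stable theory, and relies crucially on the $L_p$-specific \fref{prp:IndSingle} which reduces independence to the pairwise independence of single elements.
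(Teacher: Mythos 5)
Your proof is correct and follows essentially the same route as the paper: both directions hinge on $\bar f \ind_{\dcl(\Cb(\bar f/A))} A$, obtained from \fref{prp:independence-via-slices} because the slices lie in that lattice by construction, together with the fact that conditional slices are invariants of types that are preserved under non-forking extension (your detour through a realization $\bar h$ of the common non-forking extension over $\dcl(A\cup\phi(A))$ is just the paper's computation $\bS_r(p^t)=\bS_r((p\rest^\cU)^t)$ in localized form). The one inaccuracy is your closing remark: the passage from $f \ind_{C_0'} A$ to $t(f) \ind_{C_0'} A$ is plain monotonicity of non-forking (since $t(f) \in \dcl(f)$) and holds in any simple theory, so it neither fails in general nor uses \fref{prp:IndSingle} (which goes in the opposite direction); the genuinely $L_p$-specific ingredient in the $n=1$ refinement is \fref{lem:IndBySliceSingle} itself.
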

\begin{proof}
  We have $\Cb(\bar f/A) \subseteq A$ by construction.
  Let $C = \dcl\big( \Cb(\bar f/A) \big) \leq A$.
  Then $p$ does not fork over $C$ and
  $\Cb(\bar f/A) = \Cb(\bar f/C)$ by
  \fref{prp:independence-via-slices}, so we might as well assume that
  $C = A$, i.e., that $\Cb(\bar f/A)$ generates $A$.
  Thus, if $\theta \in \Aut(\cU)$ fixes $\Cb(\bar f/A)$ pointwise then it fixes
  $A$ pointwise, so
  $\theta(p) = p$ and therefore $\theta(p\rest^\cU) = p\rest^\cU$.

  Conversely, assume that $\theta(p\rest^\cU) = p\rest^\cU$.
  For a term $t(\bar x)$ let $p^t = \tp(t(\bar f)/A)$,
  noticing that this indeed only depends on $p$, and we may apply the
  same definition to arbitrary $n$-types.
  Observe then that $(p\rest^\cU)^t = (p^t)\rest^\cU$.
  A member of $\Cb(\bar f/A)$ is of the form
  $\bS_r(t(\bar f/A))  = \bS_r(p^t) = \bS_r((p\rest^\cU)^t)$,
  so each is fixed by $\theta$.

  The case $n=1$ is proved similarly using
  \fref{lem:IndBySliceSingle}.
\end{proof}

Notice that it follows that $\Cb(\bar f/A) \subseteq \dcl(A)$ and that
$\bar f \ind_{\Cb(\bar f/A)} A$ by \fref{prp:independence-via-slices}
and \fref{prp:IndSingle}.
Moreover, $\Cb(\bar f/A)$ is minimal as such, in the sense that if
$B \subseteq \dcl(A)$ and $\bar f \ind_B A$ then
$\Cb(\bar f/A) \subseteq \dcl(B)$.
These are indeed properties of canonical bases in an arbitrary stable
theory.

\bibliographystyle{begnac}
\bibliography{begnac}

\end{document}